\numberwithin{equation}{section}
\newcommand{\CC}{\mathbb C}
\newcommand{\FF}{\mathbb F}
\newcommand{\PP}{\mathbb P}
\newcommand{\QQ}{\mathbb Q}
\newcommand{\ZZ}{\mathbb Z} 
\newcommand{\Zhat}{\widehat\ZZ}
\newcommand{\OO}{\mathcal O}
\newcommand{\calP}{\mathcal P}
\newcommand{\calC}{\mathcal C}
\newcommand{\calW}{\mathcal W}
\newcommand{\calV}{\mathcal V}
\newcommand{\ang}[1]{ \langle #1 \rangle  }
\def\cyc{{\operatorname{cyc}}}
\def\ab{{\operatorname{ab}}}
\def\un{{\operatorname{un}}}
\def\Spec{\operatorname{Spec}} 
\def\Gal{\operatorname{Gal}}
\def \GL {\operatorname{GL}}  
\def \GSp {\operatorname{GSp}}
\def \Sp {\operatorname{Sp}}
\def\Aut{\operatorname{Aut}} 
\def\Frob{\operatorname{Frob}}
\def\tr{\operatorname{tr}}
\def\mult{\operatorname{mult}}
\newcommand{\defi}[1]{\textsf{#1}} 
\def\bbar#1{\setbox0=\hbox{$#1$}\dimen0=.2\ht0 \kern\dimen0 
\overline{\kern-\dimen0 #1}}
\newcommand{\Qbar}{{\overline{\mathbb Q}}} 
\newcommand{\Kbar}{{\bbar{K}}} 
\newcommand{\kbar}{\bbar{k}} 
\newcommand{\FFbar}{\overline{\FF}} 
\newtheorem{thm}{Theorem}[section]
\newtheorem{lemma}[thm]{Lemma}
\newtheorem{prop}[thm]{Proposition}
\theoremstyle{definition}
\theoremstyle{remark}
\newtheorem{remark}[thm]{Remark}
\newenvironment{romanenum}{\hfill \begin{enumerate} }{\end{enumerate}}
\definecolor{webcolor}{rgb}{0.8,0,0.2}
\definecolor{webbrown}{rgb}{.6,0,0}
\begin{document}
\title[An explicit Jacobian of dimension $3$ with maximal Galois action]{An explicit Jacobian of dimension $3$ with maximal Galois action}

\subjclass[2010]{Primary 11F80; Secondary 11G10}


\author{David Zywina}
\address{Department of Mathematics, Cornell University, Ithaca, NY 14853}
\email{zywina@math.cornell.edu}

\begin{abstract}
We gives an explicit genus $3$ curve over $\QQ$ such that the Galois action on the torsion points of its Jacobian is a large as possible.  That such curves exist is a consequence of a theorem of D.~Zureick-Brown and the author; however, those methods do not produce explicit examples.    We shall apply the general strategies of Hall and Serre  in their open image theorems.  We also make use of Serre's conjecture to show that the modulo $\ell$ Galois actions are irreducible.   While we computationally focus on a single curve, the methods of this paper can be applied to a large family of genus $3$ curves.
\end{abstract}

\maketitle


\section{Introduction}

Consider a principally polarized abelian variety $A$ of dimension $g\geq 1$ defined over $\QQ$.  Fix an algebraic closure $\Qbar$ of $\QQ$ and define the absolute Galois group $G_\QQ:=\Gal(\Qbar/\QQ)$.   The Galois action on the torsion points of $A(\Qbar)$ can be expressed in terms of a Galois representation
\[
\rho_A \colon G_\QQ \to \GSp_{2g}(\Zhat),
\]
see \S\ref{SS:background} for details.   

In \cite{ZB-Z}, Zureick-Brown and the author prove that for each integer $g\geq 3$, there is a principally polarized abelian variety $A/\QQ$ (in fact the Jacobian of a trigonal curve) such that $\rho_A(G_\QQ)=\GSp_{2g}(\Zhat)$.  For such an abelian variety, the Galois group acts on the torsion points in the most general way possible.  Unfortunately, the methods of \cite{ZB-Z} are not useful for constructing examples.

The goal of this paper is to give the first \emph{explicit} $A/\QQ$ for which the representation $\rho_A$ is surjective, i.e., the Galois group acts on the torsion points of $A$ in the most general way possible.    

When $A/\QQ$ is an elliptic curve, the image of the representation $\rho_A$ is an important ingredient in several deep conjectures, for example the Lang-Trotter conjectures \cite{MR0568299} and the Koblitz conjecture \cite{MR2805578}.    Our explicit example should be useful in providing numerical evidence for related higher dimension conjectures.

\subsection{The example} \label{SS:example}

Let $C$ be the subscheme of $\PP^2_\QQ$ defined by the quartic equation
\begin{align} \label{E:main}
x^3y - x^2y^2 + x^2z^2 + xy^3 - xyz^2 - xz^3 - y^4 + y^3z - y^2z^2 - yz^3 = 0.
\end{align}
The curve $C$ is smooth and hence has genus $3$.   Let $J$ be the Jacobian of the curve $C$; it is a principally polarized abelian variety of dimension $6$ defined over $\QQ$.  The Galois action on the torsion points of $J$ is as large as possible.

\begin{thm} \label{T:main}
With $J/\QQ$ as above, we have $\rho_J(G_\QQ)=\GSp_6(\Zhat)$.
\end{thm}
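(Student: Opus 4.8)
The plan is to prove surjectivity of $\rho_J\colon G_\QQ \to \GSp_6(\Zhat)$ by combining a mod-$\ell$ surjectivity statement for all primes $\ell$ with an $\ell$-adic lifting argument in the style of Serre's and Hall's open image theorems. Since $\GSp_6(\Zhat) = \prod_\ell \GSp_6(\ZZ_\ell)$, it suffices to show (a) $\rho_{J,\ell}\colon G_\QQ \to \GSp_6(\ZZ_\ell)$ is surjective for every prime $\ell$, and (b) the image of $\rho_J$ surjects onto $\GSp_6(\ZZ/N)$ for a single well-chosen modulus $N$, to handle the product over small primes simultaneously and to rule out "diagonal" obstructions coming from common quotients of the $\GSp_6(\ZZ/\ell)$ (the only relevant one being the sign/transvection quotient, controlled by the cyclotomic character since the similitude factor of $\rho_J$ is the cyclotomic character). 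The reduction of (a) to the mod-$\ell$ statement uses a standard lifting lemma: if $\ell \geq 5$ (or $\ell$ outside a small bad set) and the image of $\bar\rho_{J,\ell}\colon G_\QQ \to \GSp_6(\FF_\ell)$ is all of $\GSp_6(\FF_\ell)$, then because $\Sp_6(\FF_\ell)$ is its own commutator and has no nontrivial abelian quotient, and $\mathfrak{sp}_6(\FF_\ell)$ is an irreducible Galois module with $H^1 = 0$ for the relevant group, the $\ell$-adic image must be everything; for the finitely many small primes one argues more carefully or by explicit computation at level $\ell^2$.

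The heart of the paper is therefore proving that $\bar\rho_{J,\ell}(G_\QQ) = \GSp_6(\FF_\ell)$ for every prime $\ell$. I would organize this in three stages. First, \emph{irreducibility}: show that the mod-$\ell$ representation $\bar\rho_{J,\ell}$ is absolutely irreducible. Here one invokes Serre's modularity conjecture (now a theorem of Khare--Wintenberger): if $\bar\rho_{J,\ell}$ were reducible, its semisimplification would be built from lower-dimensional pieces that are modular of low level and weight, and one gets a contradiction by checking there are no such newforms of the predicted level (computed from the conductor of $J$, i.e., from the primes of bad reduction of $C$) and weight — this is the role the abstract explicitly assigns to Serre's conjecture. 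Second, \emph{big image via a transvection}: exhibit a prime $p$ of good reduction such that $\Frob_p$ acts on $J[\ell]$ as (a scalar multiple of) a transvection, equivalently the characteristic polynomial of $\Frob_p$ on $T_\ell J$ specializes mod $\ell$ to $(T-1)$ times an irreducible factor, or more robustly that $C$ has a suitable prime of bad reduction $p$ where $J$ has semistable reduction with toric rank $1$, so that the image of inertia at $p$ contains a transvection (this is Hall's mechanism — the presence of a nontrivial transvection in an irreducible subgroup of $\Sp_{2g}(\FF_\ell)$ forces the group to contain $\Sp_{2g}(\FF_\ell)$, by a classification of such subgroups due to Zalesskii--Serezhkin / used by Hall). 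Third, \emph{upgrading to $\GSp_6$}: since the similitude character of $\bar\rho_{J,\ell}$ is the mod-$\ell$ cyclotomic character, which is surjective onto $\FF_\ell^\times$, the full image is the preimage of $\FF_\ell^\times$, i.e., all of $\GSp_6(\FF_\ell)$.

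The main obstacle I expect is the irreducibility step, and specifically making it unconditional-feeling and fully explicit: one must pin down the conductor of $J$ (analyzing the reduction type of $C$ at each bad prime — these come from the resultant/discriminant of the quartic \eqref{E:main}), enumerate all possible constituents of a hypothetical reducible $\bar\rho_{J,\ell}$ for \emph{every} $\ell$ (including small $\ell$, where Serre's conjecture has subtleties with weight and where low-weight forms of small level do exist), and rule each out — this requires care because the list of exceptional $\ell$ cannot be left open if we want surjectivity for \emph{all} primes. A secondary obstacle is producing, uniformly in $\ell$, the transvection in the image: a single auxiliary prime $p$ of bad multiplicative reduction with toric rank exactly $1$ works for all $\ell$ away from those dividing the relevant Tamagawa-type quantity, but one then needs a separate (possibly computational) argument for those finitely many remaining primes, e.g.\ by computing $\bar\rho_{J,\ell}(G_\QQ)$ directly from Frobenius data at several good primes and checking it is not contained in any maximal subgroup of $\GSp_6(\FF_\ell)$. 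The small primes $\ell \in \{2,3,5,7\}$ are where I anticipate the bulk of the explicit computational effort, both for lifting and for ruling out the sporadic maximal subgroups of $\GSp_6(\FF_\ell)$.
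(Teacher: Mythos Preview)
Your proposal captures the broad shape of the argument---reduce to mod-$\ell$, produce a transvection from inertia at a semistable prime, prove irreducibility via Serre's conjecture, and lift---but there is one genuine gap and several places where the paper's route differs from yours.

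\textbf{The gap: primitivity.} Your claim that ``the presence of a nontrivial transvection in an irreducible subgroup of $\Sp_{2g}(\FF_\ell)$ forces the group to contain $\Sp_{2g}(\FF_\ell)$'' is false as stated. The normal subgroup $R\subseteq G$ generated by transvections need not act irreducibly just because $G$ does: the imprimitive wreath-product subgroup $\Sp_2(\FF_\ell)^3\rtimes\mathfrak S_3\subset\Sp_6(\FF_\ell)$ is irreducible and contains transvections, yet is proper. The paper therefore proves separately (Proposition~\ref{P:primitive}) that $\rho_{J,\ell}$ is \emph{primitive}, and only then does Zalesski{\u\i}--Sere{\v z}kin apply (Proposition~\ref{P:mod ell group theory}). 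The primitivity argument is not trivial: it uses the tame-inertia-weight bounds at $\ell$ (Lemma~\ref{L:amplitude cor}) together with the fact that $\rho_{J,\ell}(I_p)$ has order $\ell$ for $p\in S$ to show that any block permutation representation $G_\QQ\to\mathfrak S_r$ is unramified everywhere, hence trivial. Your proposal does not mention this step.

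\textbf{Differences in the other steps.}
(i) The reduction to mod-$\ell$ is cleaner than you suggest. Since $g=3$, the group $\Sp_6(\FF_\ell)$ is perfect for \emph{every} prime $\ell$ (including $\ell=2$), and its only nonabelian simple quotient is $\PSp_6(\FF_\ell)$; together with Goursat's lemma and a lifting input from Vasiu, this gives Proposition~\ref{P:reduce to mod ell cases}: $\rho_J$ is surjective iff $\rho_{J,\ell}(G_\QQ)\supseteq\Sp_6(\FF_\ell)$ for all $\ell$. There is no separate modulus-$N$ argument, no $H^1$ computation, and no lifting to $\ell^2$ for small primes.
(ii) Serre's conjecture is invoked only when the smallest composition factor of $J[\ell]$ is two-dimensional (\S\ref{SS:2d}); the one- and three-dimensional cases are ruled out by elementary manipulations with a handful of explicit polynomials $P_p(T)$. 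So irreducibility is not a uniform appeal to modularity.
(iii) Your concern about the transvection failing for $\ell$ dividing a ``Tamagawa-type quantity'' does not arise: Lemma~\ref{L:first computation} shows the completed local ring at each node is $\ZZ_p^{\un}[[x,y]]/(xy+p)$, so inertia at $p\in\{7,11\}$ gives a transvection for every $\ell\neq p$, and the two primes cover all $\ell$.
(iv) The prime $\ell=2$ is handled by a two-line Frobenius computation (Lemma~\ref{L:mod 2 case}); the odd small primes are absorbed into the general irreducibility/primitivity argument via a preliminary check that $P_p(T)$ is irreducible modulo $\ell$ for a suitable $p$.
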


\begin{remark} \label{R:easiest}
Let $A/\QQ$ be a principally polarized abelian variety of dimension $g\geq 1$. In Proposition~\ref{P:remark}, we will show that if $g\leq 2$ or if $A$ is the Jacobian of a hyperelliptic curve, then $\rho_A$ is \emph{not} surjective.  This motivates why we have first considered the Jacobian of a smooth plane quartic. 
\end{remark}

Though we focus only on a specific curve, the methods will also apply to a large class of smooth plane quartics.   Indeed, most of this paper can be viewed as describing how to make the criterion of C.~Hall in \cite{MR2820155} effective.    The largest difference from  \cite{MR2820155} is that we use Serre's conjecture to prove that the modulo $\ell$ representations are irreducible; this is motivated by the work of Dieulefait \cite{MR1969642} on abelian surfaces.

\subsection{Overview}
We now give a brief overview of the contents of this paper; none of the following will be needed later on.

For each prime $\ell$, let $J[\ell]$ be the $\ell$-torsion subgroup of $J(\Qbar)$.   The natural $G_\QQ$-action on $J[\ell]$ can be expressed by a representation
\[
\rho_{J,\ell} \colon G_\QQ \to \GSp_6(\FF_\ell),
\]
see \S\ref{SS:background} for details.   The constraint on the image of $\rho_{J,\ell}$ arises from the Weil pairing.

We will show (Proposition~\ref{P:reduce to mod ell cases}) that $\rho_J$ is surjective if and only if $\rho_{J,\ell}$ is surjective for all primes $\ell$.    So fix any odd prime $\ell$ (the prime $\ell=2$ can be dealt with separately).

We will see in \S\ref{S:good} that the curve $C$, and hence also $J$, has good reduction at all primes away from the set $S:=\{7,11,83\}$.   Therefore, $\rho_{J,\ell}$ is unramified at all primes $p \notin S\cup\{\ell\}$.  The characteristic polynomial $\det(TI - \rho_{J,\ell}(\Frob_p)) \in \FF_\ell[T]$ is the reduction modulo $\ell$ of a computable polynomial $P_p(T) \in \ZZ[T]$ that does not depend on $\ell$.

For $p\in S$ with $p\neq \ell$, we will show in \S\ref{S:bad primes} that $\rho_{J,\ell}(I_p)$ is a cyclic group of order $\ell$, where $I_p \subseteq G_\QQ$ is an inertia subgroup for $p$.   We will prove this by using the Picard-Lefschetz formula along with the fact that the only singularities for our model (\ref{E:main}) modulo $p$ are double ordinary points.   If $p \in \{7,11\}$, then $\rho_{J,\ell}(I_p)$ will be generated by a transvection (an element with determinant $1$ that fixes a codimension $1$ subspace).

In \S\ref{S:inertia at ell}, we shall give constraints on the semi-simplification of the representation $\rho_{J,\ell}|_{I_\ell}$.

In \S\ref{S:irreducible}, we will prove that the representation $\rho_{J,\ell}$ is irreducible.   The most involved case is when the composition factor of $J[\ell]$ (as an $\FF_\ell[G_\QQ]$-module) with smallest $\FF_\ell$-dimension has dimension $2$; for this, we make use of Serre's conjecture.

In \S\ref{S:primitive}, we will prove that the representation $\rho_{J,\ell}$ is primitive.   More precisely, we show that there are no non-zero subspaces $W_1,\ldots, W_r$ of $J[\ell]$ such that $J[\ell] = W_1 \oplus \cdots \oplus W_r$ and such that the $G_\QQ$-action permutes the spaces $W_1,\ldots, W_r$.

Knowing that $\rho_{J,\ell}$ is irreducible and primitive, and that $\rho_{J,\ell}(G_\QQ)$ contains a transvection, we will be able to deduce that $\rho_{J,\ell}$ is surjective.

\begin{remark}
Instead of using Serre's conjecture for irreducibility, one could use the explicit isogeny theorem  of Gaudron and R\'emond as done by Lombardo in \cite{Lombardo}.   This gives an explicit $\ell_0$ such that $\rho_{J,\ell}$ is irreducible for all $\ell\geq \ell_0$; unfortunately, $\ell_0$ will be too large to feasibly check the irreducibility for primes $\ell<\ell_0$.    We finally remark that, independently, similar ideas as in this paper have been recently used to show that $\rho_{A,\ell}$ is surjective for all $\ell >2$, where $A$ is the Jacobian of an explicit genus $3$ \emph{hyperelliptic} curve over $\QQ$, cf.~\cite{bla}.
\end{remark}
 
\subsection*{Acknowledgements}
 Thanks to Chris Hall and Ravi Ramakrishna for several helpful discussions.    The computations in this paper were performed using the \texttt{Magma} computer algebra system \cite{Magma}.
 
\section{Background and a surjectivity criterion}    

Let $C$ be a smooth projective and geometrically integral curve defined over $\QQ$ with genus $g\geq 1$.   Let $J$ be the Jacobian of the curve $C$; it is a principally polarized abelian variety of dimension $g$ defined over $\QQ$.     In later sections, we will only consider the curve $C/\QQ$ from \S\ref{SS:example}.

\subsection{Symplectic group background}

For a commutative ring $R$, let $M$ be a finitely generated free $R$-module equipped with a non-degenerate alternating bilinear form $\ang{ \; , \, }\colon M \times M \to R$.   We define $\GSp(M)$ to be the group of $A\in \Aut_R(M)$ such that for some $\mult(A) \in R^\times$, we have $\ang{Av,Aw} = \mult(A) \ang{v,w}$ for all $v,w \in M$.    The element $\mult(A)\in R^\times$ is called the \defi{multiplier} of $A$ and gives rise to a homomorphism
\[
\mult \colon \GSp(M) \to R^\times.
\]
We call $\GSp(M)$ the \defi{group of symplectic similitudes}.  

The rank of $M$ over $R$ is an even number, say $2g$.   There is an $R$-isomorphism between $M$ and $R^{2g}$ such that the pairing on $M$ agrees with the pairing  $\ang{v,w} = v^t\cdot J \cdot w$ on $R^{2g}$, where we are viewing $v$ and $w$ as column vectors and $J$ is the $2g\times 2g$ matrix $\left(\begin{smallmatrix}0 & I_g \\  -I_g & 0\end{smallmatrix}\right)$.    This gives an isomorphism between $\GSp(M)$ and $\GSp_{2g}(R):=\GSp(R^{2g})$. As before, we have a homomorphism $\mult\colon \GSp_{2g}(R)\to R^\times$ whose kernel, which we denote by $\Sp_{2g}(R)$, is called the \defi{symplectic group}.   Observe that $\GSp_{2g}(R)= \{A \in \GL_{2g}(R): A^t\cdot J \cdot A = \mult(A) J\}$ and  $\Sp_{2g}(R)= \{A \in \GL_{2g}(R): A^t\cdot J \cdot A = J\}$.   

Fix a field $k$ and an algebraic closure $\kbar$.   Take any $A\in \GSp_{2g}(k)$ and set $\gamma:= \mult(A)$.  Let $\lambda_1,\ldots, \lambda_{2g} \in \kbar$ be the roots of $P(x):= \det(xI - A) \in k[x]$.   After renumbering the $\lambda_i$, one may assume that $\lambda_{2i-1} \lambda_{2i} = \gamma$ for $1\leq i \leq g$, cf.~\cite{MR1440067}*{Lemma~3.3}.    From this, one can verify that
\begin{equation} \label{E:pre-functional}
x^{2g} P(\gamma/x) = \gamma^g P(x).
\end{equation}

\subsection{Galois representations}  \label{SS:background}

For each integer $n\geq 1$, let $J[n]$ be the $n$-torsion subgroup of $J(\Qbar)$; it is a $\ZZ/n\ZZ$-module of rank $2g$.   There is a natural action of the Galois group $G_\QQ$ on $J[n]$ that respects its group structure.   The Weil pairing and the principal polarization of $J$ give a non-degenerate and alternating pairing  
\[
e_n \colon J[n] \times J[n] \to \mu_n,
\]
where $\mu_n$ is the group of $n$-th roots of unity in $\Qbar$.

Let $\chi_n \colon G_\QQ \to (\ZZ/n\ZZ)^\times$ be the modulo $n$ cyclotomic character, i.e., $\sigma(\zeta)=\zeta^{\chi_n(\sigma)}$ for all $\sigma\in G_\QQ$ and $\zeta\in \mu_n$.  The pairing $e_n$ satisfies 
\[
e_n(\sigma(v),\sigma(w))=\sigma(e_n(v,w)) = e_n(v,w)^{\chi_n(\sigma)}
\]
for all $v,w\in J[n]$ and $\sigma\in G_\QQ$.  The Galois action on $J[n]$ can thus be expressed by a Galois representation
\[
\rho_{J,n} \colon G_\QQ \to \GSp(J[n],e_n)\cong \GSp_{2g}(\ZZ/n\ZZ).
\]
Note that $\mult \circ \rho_{J,n} = \chi_n$.  By combining over all $n$ and choosing bases compatibly, we obtain a single Galois representation
\[
\rho_J \colon G_\QQ \to \GSp_{2g}(\Zhat).
\]
The character $\mult\circ \rho_A \colon G_\QQ \to \Zhat^\times$ is the cyclotomic character and is thus surjective.

The following proposition, which will be proved in \S\ref{SS:proof of reduce to mod ell case}, will let us restrict our attention to the representations $\rho_{J,\ell}$.

\begin{prop} \label{P:reduce to mod ell cases}
Let $C/\QQ$ be a smooth projective and geometrically integral curve of genus $g\geq 3$ and let $J$ be its Jacobian.  Then $\rho_J(G_\QQ)=\GSp_{2g}(\Zhat)$ if and only if $\rho_{J,\ell}(G_\QQ) \supseteq \Sp_{2g}(\FF_\ell)$ for all primes $\ell$.
\end{prop}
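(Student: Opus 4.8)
The statement has two directions. The forward direction is trivial: if $\rho_J(G_\QQ)=\GSp_{2g}(\Zhat)$, then composing with the reduction map $\GSp_{2g}(\Zhat)\to\GSp_{2g}(\FF_\ell)$ shows $\rho_{J,\ell}$ is surjective, so in particular $\rho_{J,\ell}(G_\QQ)\supseteq\Sp_{2g}(\FF_\ell)$. The real content is the converse. So assume $\rho_{J,\ell}(G_\QQ)\supseteq\Sp_{2g}(\FF_\ell)$ for every prime $\ell$, and set $G:=\rho_J(G_\QQ)\subseteq\GSp_{2g}(\Zhat)$. Since $\GSp_{2g}(\Zhat)=\prod_\ell\GSp_{2g}(\ZZ_\ell)$, I would argue in two stages: first that the $\ell$-adic image $G_\ell:=\rho_{J,\ell^\infty}(G_\QQ)\subseteq\GSp_{2g}(\ZZ_\ell)$ equals all of $\GSp_{2g}(\ZZ_\ell)$ for each $\ell$, and then that the product of these local images is the full adelic group.

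\textbf{Step 1: from mod $\ell$ to $\ell$-adic.} Fix a prime $\ell$. I want to show that a closed subgroup $H\subseteq\GSp_{2g}(\ZZ_\ell)$ whose reduction mod $\ell$ contains $\Sp_{2g}(\FF_\ell)$ and whose multiplier is surjective onto $\ZZ_\ell^\times$ must be all of $\GSp_{2g}(\ZZ_\ell)$. The standard tool is a lifting/generation lemma for pro-$\ell$ groups: it suffices to show $H$ surjects onto $\GSp_{2g}(\ZZ/\ell^2\ZZ)$, because the kernel of $\Sp_{2g}(\ZZ_\ell)\to\Sp_{2g}(\FF_\ell)$ is a pro-$\ell$ group whose Frattini quotient is $\mathfrak{sp}_{2g}(\FF_\ell)$, which is an irreducible $\Sp_{2g}(\FF_\ell)$-module for $\ell$ odd (and one handles $\ell=2$ and the small cases $g$ small separately, but here $g\geq 3$ so $2g\geq 6$). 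Concretely: $\Sp_{2g}(\FF_\ell)$ is its own commutator subgroup for $\ell\geq 3$ (and for $\ell=2$, $2g\geq 6$), so by a result going back to Serre (see \cite{MR1440067} or Serre's \emph{Abelian $\ell$-adic representations}), a subgroup of $\Sp_{2g}(\ZZ_\ell)$ surjecting mod $\ell$ is automatically everything; combined with the surjectivity of the multiplier, $H=\GSp_{2g}(\ZZ_\ell)$. Thus $G_\ell=\GSp_{2g}(\ZZ_\ell)$ for every $\ell$.

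\textbf{Step 2: from $\ell$-adic to adelic (an independence argument).} Now I know that each projection $\pr_\ell(G)=\GSp_{2g}(\ZZ_\ell)$, and I must upgrade this to $G=\prod_\ell\GSp_{2g}(\ZZ_\ell)$. By Goursat-type reasoning it is enough to show that for distinct primes $\ell\neq\ell'$, the projection of $G$ to $\GSp_{2g}(\FF_\ell)\times\GSp_{2g}(\FF_{\ell'})$ is the full product — and more generally that the reductions are independent. The key point is that the only common quotients of $\Sp_{2g}(\FF_\ell)$ and $\Sp_{2g}(\FF_{\ell'})$ are trivial: the groups $\PSp_{2g}(\FF_\ell)$ are simple and pairwise non-isomorphic for $2g\geq 6$ and distinct $\ell$ (they have different orders), and $\Sp_{2g}(\FF_\ell)$ is a central extension of $\PSp_{2g}(\FF_\ell)$ by $\{\pm 1\}$. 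The only subtlety is the abelian parts coming from the multiplier characters $\chi_\ell$: these piece together into the full cyclotomic character $\widehat\ZZ^\times\to\widehat\ZZ^\times$, which is surjective (the cyclotomic character is surjective by irreducibility of cyclotomic polynomials over $\QQ$). One checks that a subgroup $G$ of $\prod_\ell\GSp_{2g}(\ZZ_\ell)$ that surjects onto each factor and has surjective total multiplier, and for which the "$\Sp$-parts" have no common nonabelian simple quotients across primes, must be the whole product — this is the independence criterion of \cite{MR1440067}*{Lemma ...} (the abelian-vs-perfect splitting: $\Sp_{2g}$ is perfect so it cannot interfere with the abelian multiplier, and distinct $\Sp_{2g}(\ZZ_\ell)$ cannot interfere with each other). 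Assembling Steps 1 and 2 gives $G=\GSp_{2g}(\Zhat)$.

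\textbf{The main obstacle.} The genuinely delicate point is Step 2, the adelic independence: one must carefully separate the (everywhere abelian) contribution of the cyclotomic character from the (everywhere perfect, for $2g\geq 6$) contribution of the symplectic groups, and verify that no two of the nonabelian simple factors $\PSp_{2g}(\FF_\ell)$ coincide. The hypothesis $g\geq 3$ is exactly what makes $\Sp_{2g}(\FF_\ell)$ perfect for all $\ell$ including $\ell=2,3$ (for $g=1$, $\SL_2(\FF_2)$ and $\SL_2(\FF_3)$ fail this), so the clean "perfect $\times$ abelian" dichotomy goes through without case analysis — this is presumably why the proposition is stated for $g\geq 3$. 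Step 1 is more routine but still requires the correct version of the mod-$\ell$ lifting lemma, in particular handling $\ell=2$ where one must check $\Sp_{2g}(\FF_2)$ is perfect (true for $2g\geq 6$) and that the relevant Lie-algebra module $\mathfrak{sp}_{2g}$ is irreducible in characteristic $2$, or else argue via commutators directly.
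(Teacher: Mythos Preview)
Your approach is essentially the same as the paper's---lift from mod $\ell$ to $\ell$-adic using that $\Sp_{2g}(\FF_\ell)$ is perfect for $g\geq 3$, then glue across primes via Goursat and the fact that the simple groups $\PSp_{2g}(\FF_\ell)$ are pairwise non-isomorphic---but the paper organizes it more cleanly by passing to the \emph{global} commutator subgroup at the outset. Concretely, the paper sets $H:=[\rho_J(G_\QQ),\rho_J(G_\QQ)]$, observes $H\subseteq\Sp_{2g}(\Zhat)$, and then works entirely inside $\Sp_{2g}$: since $\Sp_{2g}(\FF_\ell)$ is perfect, $H$ surjects mod $\ell$ for every $\ell$, and a single lemma (closed subgroup of $\Sp_{2g}(\Zhat)$ surjecting mod every $\ell$ is everything) finishes it. The multiplier is dealt with only once, at the very end.

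Your Step~1 has a small gap that this commutator trick is needed to fill: you assert that a closed $H\subseteq\GSp_{2g}(\ZZ_\ell)$ whose mod-$\ell$ image contains $\Sp_{2g}(\FF_\ell)$ and whose multiplier is surjective must be all of $\GSp_{2g}(\ZZ_\ell)$, invoking the $\Sp$-lifting lemma. But that lemma applies to subgroups of $\Sp_{2g}(\ZZ_\ell)$, and it is not immediate that $H\cap\Sp_{2g}(\ZZ_\ell)$ surjects onto $\Sp_{2g}(\FF_\ell)$---you only know $H$ itself does. The fix is exactly to take $[H,H]\subseteq\Sp_{2g}(\ZZ_\ell)$ and note that its mod-$\ell$ image is $[\GSp_{2g}(\FF_\ell),\GSp_{2g}(\FF_\ell)]=\Sp_{2g}(\FF_\ell)$ by perfectness; then the lifting lemma applies. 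Once you do this, you might as well do it globally in $\Sp_{2g}(\Zhat)$ rather than prime-by-prime, which is the paper's route and avoids having to separately manage the ``abelian vs.\ perfect'' bookkeeping in your Step~2.
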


With the above proposition in mind, we now give a criteria for showing that a subgroup of $\GSp_{2g}(\FF_\ell)$ contains $\Sp_{2g}(\FF_\ell)$.  First we need to introduces a few definitions.

Fix a representation $G \to \Aut_{\FF_\ell}(V)$, where $V$ is a finite dimensional $\FF_\ell$-vector space.  We say that $V$ is \defi{reducible} (and \defi{irreducible} otherwise) if there is a non-trivial proper subspace of $V$ that is stable under the $G$-action.   We say that $V$ is \defi{imprimitive} (and \defi{primitive} otherwise) if there is an integer $r\geq 2$ and non-zero subspaces $W_1,\ldots, W_r$ of $V$ such that $V=W_1\oplus\cdots \oplus W_r$ and such that $\{\sigma(W_1),\ldots,\sigma(W_r)\}= \{W_1,\ldots, W_r\}$ for all $\sigma\in G$.

For $A\in \Aut_{\FF_\ell}(V)$, let $V^{A=1}$ be the subspace of $V$ consisting of the vectors that are fixed by $A$.   We say that $A$ is a \defi{transvection} if $V^{A=1}$ has codimension $1$ in $V$ and $\det(A)=1$.

\begin{prop} \label{P:mod ell group theory}
Fix an integer $g\geq 2$ and an odd prime $\ell$.   Let $G$ be a subgroup of $\GSp_{2g}(\FF_\ell)$ with its natural action on $V=\FF_\ell^{2g}$.  Suppose that $G$ contains a transvection and that the action of $G$ on $V$ is irreducible and primitive.   Then $G \supseteq  \Sp_{2g}(\FF_\ell)$.
\end{prop}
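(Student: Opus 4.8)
The plan is to apply a classification of subgroups of $\Sp_{2g}(\FF_\ell)$ generated by transvections, together with the irreducibility and primitivity hypotheses to rule out all the proper possibilities. First I would observe that since $G$ contains a transvection $\tau$ and $\mult(\tau)=\det(\tau)=1$, the group $H:=G\cap\Sp_{2g}(\FF_\ell)$ is nontrivial; let $N$ be the normal subgroup of $H$ generated by all transvections in $H$. Because conjugation by an element of $G$ sends a transvection to a transvection (the property "$V^{A=1}$ has codimension $1$ and $\det A=1$" is conjugation-invariant), $N$ is in fact normal in $G$. The strategy is to show $N=\Sp_{2g}(\FF_\ell)$, which suffices since then $\Sp_{2g}(\FF_\ell)=N\subseteq H\subseteq G$.

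Next I would invoke the key group-theoretic input: a classification (due essentially to Kantor, building on Zalesskii--Serezhkin and McLaughlin) of the irreducible subgroups of $\GL_{2g}(\FF_\ell)$ generated by transvections. For $\ell$ odd, such a group, acting irreducibly on $\FF_\ell^{2g}$, is one of: $\Sp_{2g}(\FF_\ell)$ (or a conjugate lying in a suitable symplectic form), $\SL_{2g}(\FF_\ell)$, $\SU_{2g}(\FF_{\ell})$ over a subfield, a symmetric or alternating group acting via a deleted permutation module, or an imprimitive group of the shape $(\text{transvection groups})\wr(\text{permutation group})$ preserving a direct sum decomposition. I would apply this to $N$ (which acts irreducibly on $V$ because $G$ does and $N\trianglelefteq G$ — here one must be slightly careful: $N$ nontrivial and normal in an irreducible $G$ forces $V$ to be a direct sum of $G$-conjugate $N$-irreducible pieces, and I would use primitivity of $G$ at this point to conclude $N$ is irreducible, or rather to directly eliminate the imprimitive case of the classification). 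Then primitivity of $G$ kills the wreath-product (imprimitive) case outright. Since $N\subseteq\Sp_{2g}(\FF_\ell)$ by construction, the cases $\SL_{2g}$ (for $g\geq 2$, $\SL_{2g}\not\subseteq\Sp_{2g}$), $\SU_{2g}$, and the full symmetric/alternating groups are eliminated by a dimension/order comparison or by noting they do not preserve the alternating form; what remains is $N=\Sp_{2g}(\FF_\ell)$.

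The main obstacle I expect is handling the imprimitive and "symmetric group" cases cleanly, i.e. making precise why primitivity of the ambient $G$ on $V$ forces $N$ to act primitively (a normal subgroup of a primitive group need not a priori be primitive — the standard fact is that it is a direct sum of copies of a single irreducible, so one uses that a transvection in $N$ cannot be "block-diagonal" across more than one summand, which pins down a single block and hence, by irreducibility of $G$, all of $V$). A secondary point requiring care is the case $g=2$, where $\Sp_4$ and $\SL_4$ are genuinely different and where the deleted permutation module of $\mathfrak{S}_6$ is $4$-dimensional and symplectic-looking; here I would note that $G$ is assumed to contain a transvection whose fixed space has codimension exactly $1$, and check that in the $\mathfrak{S}_n$-module the transvection-like elements have fixed spaces of the wrong codimension, or simply cite that $\mathfrak S_6\cap\Sp_4(\FF_\ell)$ is too small. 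Finally I would remark that, with $\Sp_{2g}(\FF_\ell)\subseteq G$ in hand, Proposition~\ref{P:mod ell group theory} combined with Proposition~\ref{P:reduce to mod ell cases} reduces Theorem~\ref{T:main} to verifying, for each prime $\ell$, the three hypotheses: presence of a transvection (from the bad primes $7,11$ via Picard--Lefschetz), irreducibility (via Serre's conjecture), and primitivity.
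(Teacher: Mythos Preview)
Your outline is essentially the paper's own argument: form the normal subgroup $R\trianglelefteq G$ generated by the transvections in $G$, show that $R$ acts irreducibly on $V$ using the primitivity of $G$, and then invoke a classification result to conclude $R=\Sp_{2g}(\FF_\ell)$. Two points where the paper is sharper than your proposal are worth noting.

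First, the passage from ``$R$ normal in the irreducible primitive $G$'' to ``$R$ irreducible on $V$'' is the genuinely delicate step, and Clifford theory alone does not give it: primitivity only forces $V$ to be $R$-isotypic, not $R$-irreducible. The paper handles this by taking an irreducible $R$-submodule $W\subseteq V$ and citing Hall's Lemma~6 (which uses the transvection hypothesis in an essential way) to show that $V=\bigoplus_{\sigma\in G/H}\sigma(W)$ is a \emph{direct} sum, after which primitivity forces $W=V$. Your parenthetical remark that ``a transvection cannot be block-diagonal across more than one summand'' is pointing at the right mechanism, but you should recognise that this is the content of Hall's lemma rather than a consequence of Clifford.

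Second, once $R$ is irreducible you invoke a full $\GL$-classification (Kantor, etc.) and then try to eliminate $\SL_{2g}$, $\SU_{2g}$, and symmetric/alternating groups by hand; you flag the $\mathfrak S_6\subseteq\Sp_4$ case as an obstacle. This is unnecessary work, and your list is in fact not quite right in odd characteristic (transpositions act as reflections, not transvections, on the deleted permutation module when $\ell$ is odd, so the symmetric-group cases do not arise). The paper bypasses all of this by citing directly the theorem of Zalesski\u{\i}--Sere\v{z}kin: $\Sp_{2g}(\FF_\ell)$ contains \emph{no} proper subgroup that is generated by transvections and acts irreducibly on $\FF_\ell^{2g}$. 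Since $R\subseteq\Sp_{2g}(\FF_\ell)$ is such a subgroup, one gets $R=\Sp_{2g}(\FF_\ell)$ in one stroke.
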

\begin{proof}
Let $R$ be the subgroup of $G$ generated by transvections; it is a subgroup of $G\cap \Sp_{2g}(\FF_\ell)$.  We have $R\neq 1$ since $G$ contains a transvection by assumption.   The group $R$ is normal in $G$ since the conjugate of a transvection is also a transvection.

Fix an irreducible $R$-submodule $W$ of $V$.   Using that $R$ is normal in $G$, one can verify that $\sigma(W)$ is an $R$-module for all $\sigma\in G$.   Let $H$ be the group consisting of $\sigma\in G$ for which $\sigma(W)=W$.    Using that $V$ is an irreducible $G$-module, we find that that $V= \sum_{\sigma\in G/H} \sigma(W)$.   Lemma~6 of \cite{MR2820155}, which uses parts of \cite{MR2372151}, says that we in fact have a direct sum $V= \oplus_{\sigma\in G/H}\, \sigma(W)$.   

Therefore, $V$ is the direct sum of the subspaces $\{\sigma(W): \sigma \in G/H\}$ which are permuted by the natural action of $G$.   Since $G$ acts primitively on $V$ by assumption, we deduce that $W=V$, i.e., $V$ is an irreducible $R$-module.

The main theorem of Zalesski{\u\i} and Sere{\v{z}}kin in \cite{MR0412295} shows that $\Sp_{2g}(\FF_\ell)$ contains no proper subgroups that act irreducibly on $V$ and are generated by transvections.    Therefore, $R=\Sp_{2g}(\FF_\ell)$.  The lemma follows since $R \subseteq G$.
\end{proof}

\subsection{Compatibility}   \label{SS:compatibility}

Take any prime $p$ for which $C/\QQ$, and hence also $J/\QQ$, has good reduction.      Let $C_p$ and $J_p$ be the reduction of $C$ and $J$, respectively, modulo $p$.  The abelian variety $J_p/\FF_p$ agrees with the Jacobian of $C_p/\FF_p$.  

Take any prime $\ell \neq p$.   Let 
\[
\rho_{J,\ell^\infty} \colon  G_\QQ \to \GSp_{2g}(\ZZ_\ell)
\]
be the Galois representation obtained by composing $\rho_J$ with the natural projection $\GSp_{2g}(\Zhat)\to \GSp_{2g}(\ZZ_\ell)$; it can also be obtained by taking the inverse limit of the representations $\rho_{J,\ell^n}$.         The representation $\rho_{J,\ell^\infty}$ is unramified at $p$ and we have
\[
\det(T I - \rho_{J,\ell^\infty}(\Frob_p)) = P_{J_p}(T),
\]
for some polynomial $P_{J_p}(T) \in \ZZ[T]$ that does not depend on the choice of $\ell$.    Here $\Frob_p$ is an \defi{(arithmetic) Frobenius automorphism} of $p$.

Let $\pi_p\colon J_p\to J_p$ be the Frobenius endomorphism of $J_p/\FF_p$.   We may also characterize $P_{J_p}(T)$ as the polynomial in $\QQ[T]$ for which $P_{J_p}(n)$ is the degree of the isogeny $n-\pi_p$ for every integer $n$.  

We can also describe the polynomial $P_{J_p}(T)$ in terms of the zeta function of $C_p$.   Recall that the \defi{zeta function} of $C_p/\FF_p$ is the formal power series 
\[
Z_{C_p}(T)=\exp\Big(\sum_{m=1}^\infty |C_p(\FF_{p^m})| \cdot T^m/m\Big).
\]
From Weil, we know that  $Z_{C_p}(T) = P_{J_p}^{\operatorname{rev}}(T)/\big( (1-T)(1-pT)\big)$, where $P_{J_p}^{\operatorname{rev}}(T):= T^{2g} P_{J_p}(1/T)$.

We have $\mult \circ \rho_{J,\ell}(\Frob_p) = p$, so from (\ref{E:pre-functional}) we obtain the functional equation
\begin{equation} \label{E:functional}
T^{2g} P(p/T) = p^g P(T).
\end{equation}

\subsection{Proof of Proposition~\ref{P:reduce to mod ell cases}}  \label{SS:proof of reduce to mod ell case}

We first prove two group theoretic lemmas.

\begin{lemma} \label{L:perfect}
Take any integers $g\geq 3$ and $n\geq 2$.   
\begin{romanenum}
\item \label{L:perfect i}
The group $\Sp_{2g}(\ZZ/n\ZZ)$ is perfect.   
\item \label{L:perfect ii}
The only simple groups that are quotients of $\Sp_{2g}(\ZZ/n\ZZ)$ are the groups $\Sp_{2g}(\ZZ/\ell\ZZ)/\{\pm I\}$ with $\ell | N$.
\end{romanenum}
\end{lemma}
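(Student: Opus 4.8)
The plan is to reduce to a prime power via the Chinese Remainder Theorem and then to the group $\Sp_{2g}(\FF_\ell)$. Write $n = \prod_i \ell_i^{e_i}$ with the $\ell_i$ distinct primes. The ring isomorphism $\ZZ/n\ZZ \cong \prod_i \ZZ/\ell_i^{e_i}\ZZ$ induces a group isomorphism $\Sp_{2g}(\ZZ/n\ZZ) \cong \prod_i \Sp_{2g}(\ZZ/\ell_i^{e_i}\ZZ)$. A finite direct product of perfect groups is perfect, so (\ref{L:perfect i}) follows once we know each $\Sp_{2g}(\ZZ/\ell^e\ZZ)$ is perfect. And if $S$ is a simple quotient of a finite direct product $\prod_i H_i$, then the image of some $H_i$ is a nontrivial normal subgroup of $S$ and hence equals $S$; thus every simple quotient of $\Sp_{2g}(\ZZ/n\ZZ)$ is already a simple quotient of some $\Sp_{2g}(\ZZ/\ell^e\ZZ)$ with $\ell \mid n$.

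To see that $\Sp_{2g}(\ZZ/\ell^e\ZZ)$ is perfect, I would use that $\ZZ/\ell^e\ZZ$ is a local ring, so $\Sp_{2g}(\ZZ/\ell^e\ZZ)$ is generated by elementary symplectic transvections; the Chevalley commutator relations in type $C_g$ with $g \geq 2$ express each such generator as a product of commutators of other elementary transvections, so the group equals its derived subgroup. Alternatively, one filters the kernel $K$ of reduction $\Sp_{2g}(\ZZ/\ell^e\ZZ) \to \Sp_{2g}(\FF_\ell)$ by the congruence subgroups $K_i = \ker(\Sp_{2g}(\ZZ/\ell^e\ZZ) \to \Sp_{2g}(\ZZ/\ell^i\ZZ))$; the graded pieces are $\Sp_{2g}(\FF_\ell)$-subquotients of the adjoint module $\mathfrak{sp}_{2g}(\FF_\ell)$, so using that $\Sp_{2g}(\FF_\ell)$ is perfect and that these pieces carry no nonzero $\Sp_{2g}(\FF_\ell)$-coinvariants one deduces $K \subseteq [\Sp_{2g}(\ZZ/\ell^e\ZZ), \Sp_{2g}(\ZZ/\ell^e\ZZ)]$ and hence perfectness. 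This establishes (\ref{L:perfect i}).

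For (\ref{L:perfect ii}), let $S$ be a simple quotient of $\Sp_{2g}(\ZZ/\ell^e\ZZ)$ and let $\pi$ be reduction modulo $\ell$ to $\Sp_{2g}(\FF_\ell)$. The image of $\ker \pi$ in $S$ is a normal $\ell$-subgroup, hence either trivial or all of $S$; the latter would make $S$ a nontrivial abelian $\ell$-group, contradicting (\ref{L:perfect i}). So $S$ is a quotient of $\Sp_{2g}(\FF_\ell)$. Since $g \geq 3$, the group $\Sp_{2g}(\FF_\ell)$ is quasi-simple for every prime $\ell$: it is perfect and $\Sp_{2g}(\FF_\ell)/\{\pm I\}$ is simple, the exceptional cases $\Sp_2(\FF_2)$, $\Sp_2(\FF_3)$, $\Sp_4(\FF_2)$ being excluded by $g \geq 3$. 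A quasi-simple group has only central normal subgroups, so the nontrivial proper quotients of $\Sp_{2g}(\FF_\ell)$ are the quotients by a nonzero subgroup of $\{\pm I\}$; the quotient by all of $\{\pm I\}$ is simple, while any other is perfect with nontrivial center and hence not simple. Therefore $S \cong \Sp_{2g}(\FF_\ell)/\{\pm I\}$ for some $\ell \mid n$, proving (\ref{L:perfect ii}).

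The step I expect to be the main obstacle is the perfectness of $\Sp_{2g}(\ZZ/\ell^e\ZZ)$ at prime powers: one needs either the elementary generation over a local ring together with the Steinberg relations, or control of the coinvariants of $\mathfrak{sp}_{2g}$ on the graded pieces of the congruence filtration, which requires a little extra attention when $\ell = 2$, where $\mathfrak{sp}_{2g}$ is not an irreducible module. Everything else reduces to routine manipulations with normal subgroups and the known (quasi-)simplicity of the finite symplectic groups.
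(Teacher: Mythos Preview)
Your argument is correct, but your route to part~(\ref{L:perfect i}) differs from the paper's. The paper proves perfectness of $\Sp_{2g}(\ZZ/n\ZZ)$ in one stroke by citing that $\Sp_{2g}(\ZZ)$ is its own commutator subgroup for $g\geq 3$ (Bass--Milnor--Serre) and that reduction modulo $n$ is surjective; the image of a perfect group is perfect. This avoids entirely the CRT reduction and the prime-power analysis you undertake. Your approach is more self-contained---it does not invoke the congruence subgroup theorem or strong approximation for $\Sp_{2g}$---but, as you correctly anticipate, the $\ell=2$ case of the Steinberg-relations or congruence-filtration argument needs genuine care (the long-root structure constants in type $C_g$ are $\pm 2$, and $\mathfrak{sp}_{2g}(\FF_2)$ contains the identity matrix as a trivial submodule), whereas the paper's argument treats all primes uniformly at the cost of importing a much deeper input.

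For part~(\ref{L:perfect ii}) the two arguments are essentially the same: both reduce to $\Sp_{2g}(\FF_\ell)$ by observing that the kernel of reduction modulo $\ell$ is a solvable $\ell$-group, and then invoke the (quasi-)simplicity of $\Sp_{2g}(\FF_\ell)$ for $g\geq 3$. Your CRT step at the start and your explicit handling of the quasi-simple structure are a bit more detailed than the paper's phrasing in terms of Jordan--H\"older factors, but the content is the same.
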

\begin{proof}
Fix $g\geq 3$.    Part~(\ref{L:perfect i}) follows by observing that $\Sp_{2g}(\ZZ)$ is its own commutator subgroup and that the reduction modulo $n$ map $\Sp_{2g}(\ZZ)\to\Sp_{2g}(\ZZ/n\ZZ)$ is surjective for all $n\geq 2$, cf.~\cite{MR0244257}*{\S12}.  Here we need $g\geq 3$ since $\Sp_{2g}(\ZZ)$ is not its own commutator subgroup when $g$ is $1$ or $2$.

We now prove (\ref{L:perfect ii}).  Using part (\ref{L:perfect i}), it suffices to show that $\Sp_{2g}(\ZZ/\ell\ZZ)/\{\pm I\}$ is the only non-abelian simple group occurring as a Jordan-H\"older factor of $\Sp_{2g}(\ZZ/\ell^e\ZZ)$ for a fixed prime $\ell$ and integer $e\geq 1$.  Note that the kernel of $\Sp_{2g}(\ZZ/\ell^e\ZZ) \to \Sp_{2g}(\ZZ/\ell\ZZ)$ is an $\ell$-group and hence solvable, so one may assume that $e=1$.  The group $\{\pm I\}$ is abelian and $\Sp_{2g}(\ZZ/\ell\ZZ)/\{\pm I\}$ is simple and non-abelian.  Here we need $g\geq 3$ again, since $\Sp_{2g}(\ZZ/\ell\ZZ)$ is solvable if $(g,\ell)\in \{(1,2),(1,3), (2,2)\}$).
\end{proof}

\begin{lemma} \label{L:Vasiu}
Fix an integer $g\geq 3$ and let $H$ be a closed subgroup of $\Sp_{2g}(\Zhat)$.   Suppose that the reduction modulo $\ell$ map $H\to \Sp_{2g}(\FF_\ell)$ is surjective for all primes $\ell$.    Then $H=\Sp_{2g}(\Zhat)$.
\end{lemma}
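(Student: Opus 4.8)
The plan is a standard Goursat-type argument combined with the structural information about $\Sp_{2g}(\ZZ/n\ZZ)$ collected in Lemma~\ref{L:perfect}. First I would reduce from the profinite group $\Sp_{2g}(\Zhat)=\prod_\ell \Sp_{2g}(\ZZ_\ell)$ to finite levels: since $H$ is closed, it suffices to show that the image $H_n$ of $H$ in $\Sp_{2g}(\ZZ/n\ZZ)$ is everything for every $n\geq 2$, and by the Chinese Remainder Theorem it is enough to treat $n=\ell^e$ a prime power. So fix $\ell$ and $e\geq 1$ and write $G_e:=\Sp_{2g}(\ZZ/\ell^e\ZZ)$; I want to prove $H_e=G_e$ by induction on $e$, the base case $e=1$ being the hypothesis.

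For the inductive step, consider the reduction map $\pi\colon G_e\to G_{e-1}$; its kernel $K$ is an abelian $\ell$-group, isomorphic as a $G_1$-module to the adjoint-type representation $\mathfrak{sp}_{2g}(\FF_\ell)$ (symmetric bilinear forms, or its dual), on which $G_{e-1}$ acts through $G_1$. By induction $\pi(H_e)=G_{e-1}$, so $H_e\cdot K=G_e$ and it remains to show $K\subseteq H_e$. The subgroup $N:=H_e\cap K$ is normal in $H_e$ (it is the kernel of $H_e\to G_{e-1}$) and, since $H_e$ surjects onto $G_{e-1}$ and hence onto $G_1$, the group $N$ is a $G_1$-submodule of $K$. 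Now invoke that $G_1=\Sp_{2g}(\FF_\ell)$ acts on $K$ with the property that its only proper submodule is contained in the (at most one-dimensional) subspace of scalar forms — more precisely, for $\ell$ odd and $g\geq 3$ the module $\mathfrak{sp}_{2g}(\FF_\ell)$ is irreducible, so either $N=K$, and we are done, or $N=0$. In the latter case $H_e\to G_{e-1}$ is an isomorphism, so the extension $1\to K\to G_e\to G_{e-1}\to 1$ is split by $H_e\hookrightarrow G_e$; one then argues that this extension is non-split (equivalently, $G_e$ has no subgroup mapping isomorphically onto $G_{e-1}$), which contradicts $N=0$. The non-splitting for $e=2$ can be seen because $\Sp_{2g}(\ZZ/\ell^2\ZZ)$ is a perfect group (Lemma~\ref{L:perfect}(\ref{L:perfect i})) whose abelianization being trivial is incompatible with a complement to $K$ only if $H^1(G_1,K)$ or $H^2(G_1,K)$ obstruct it; alternatively, and more cleanly, appeal to the classical fact (Vasiu, Serre) that the congruence subgroups of $\Sp_{2g}(\Zhat)$ admit no such splittings for $g\geq 3$, which is exactly the content one is allowed to cite here.

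An alternative, perhaps cleaner, organization avoids the level-by-level cohomological bookkeeping: let $H'$ be the closure of the commutator subgroup of $H$. Since $H$ surjects onto each $\Sp_{2g}(\FF_\ell)$ and these groups are perfect for $g\geq 3$ (Lemma~\ref{L:perfect}(\ref{L:perfect i})), $H'$ also surjects onto each $\Sp_{2g}(\FF_\ell)$, so one may replace $H$ by $H'$ and assume $H$ is its own closed commutator subgroup. Then use Lemma~\ref{L:perfect}(\ref{L:perfect ii}): any finite quotient $H_n$ of $H$ has the same non-abelian simple Jordan–Hölder factors as a quotient of $\Sp_{2g}(\ZZ/n\ZZ)$, namely the groups $\PSp_{2g}(\FF_\ell)$ with $\ell\mid n$; combined with perfectness this pins down each $H_n$ to be $\Sp_{2g}(\ZZ/n\ZZ)$ by an induction on $n$ driven entirely by the structure of $\ell$-group kernels and the simplicity statement, with no extension-splitting to check by hand.

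The main obstacle is the step $N=0\Rightarrow$ contradiction, i.e.\ ruling out a complement to the $\ell$-power kernel at level $\ell^e$; this is precisely where the hypothesis $g\geq 3$ is essential (it fails for $g=1,2$ because of the small-rank coincidences noted after Lemma~\ref{L:perfect}), and the most economical route is to quote the relevant result of Vasiu (as the lemma's label suggests) rather than reprove the non-splitting of congruence extensions of $\Sp_{2g}$ from scratch.
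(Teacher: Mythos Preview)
There is a genuine gap. The sentence ``by the Chinese Remainder Theorem it is enough to treat $n=\ell^e$ a prime power'' is false as stated: CRT gives $\Sp_{2g}(\ZZ/n\ZZ)\cong\prod_{\ell\mid n}\Sp_{2g}(\ZZ/\ell^{e_\ell}\ZZ)$, but a subgroup surjecting onto each factor need not be the full product. This is exactly where the Goursat argument you advertise in your opening sentence must actually be carried out, and it is where Lemma~\ref{L:perfect}(\ref{L:perfect ii}) is used. The paper does this explicitly: once $H(\ell^e)=\Sp_{2g}(\ZZ/\ell^e\ZZ)$ is known for all prime powers, an induction on the number of prime factors of $n$ together with Goursat's lemma reduces to showing that $\Sp_{2g}(\ZZ/m_1\ZZ)$ and $\Sp_{2g}(\ZZ/m_2\ZZ)$ share no simple quotient when $\gcd(m_1,m_2)=1$; Lemma~\ref{L:perfect}(\ref{L:perfect ii}) supplies this, since the only simple quotients are the groups $\PSp_{2g}(\FF_\ell)$ for $\ell\mid m_i$ and these are pairwise non-isomorphic for distinct $\ell$. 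Your ``alternative organisation'' does not fill this gap either---having the right Jordan--H\"older factors does not by itself force $H_n$ to be all of $\Sp_{2g}(\ZZ/n\ZZ)$, and the sketch is too vague at precisely this point.

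For the prime-power lifting step, your eventual suggestion---cite Vasiu---is exactly what the paper does (one line, quoting \cite{MR2081941}*{Theorem~2.2.5}). The sketch you offer before falling back on that citation is shakier than you acknowledge: the claim that perfectness of $\Sp_{2g}(\ZZ/\ell^2\ZZ)$ obstructs a complement to $K$ is simply wrong (a semidirect product $K\rtimes G_1$ with $G_1$ perfect and $K$ an abelian $G_1$-module with trivial coinvariants is itself perfect, so perfectness says nothing about splitting), and your irreducibility assertion for $\mathfrak{sp}_{2g}(\FF_\ell)$ leaves $\ell=2$ unaddressed. Note also that the hypothesis $g\geq 3$ in the paper's argument enters through Lemma~\ref{L:perfect} and hence through the Goursat step you skipped, not through the prime-power lifting.
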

\begin{proof}
For each integer $n\geq 2$, let $H(n) \subseteq \Sp_{2g}(\ZZ/n\ZZ)$ be the image of $H$ under the reduction modulo $n$ map.  We claim that $H(n)=\Sp_{2g}(\ZZ/n\ZZ)$ for all $n\geq 2$.  The lemma will follow directly from the claim since $H$ is closed.

First suppose that $n$ is a prime power, say $n=\ell^e$ for some prime $\ell$ and integer $e\geq 1$.   One can show that there are no proper subgroups of $\Sp_{2g}(\ZZ/\ell^e\ZZ)$ whose image modulo $\ell$ is the full group	$\Sp_{2g}(\ZZ/\ell\ZZ)$ (this for example follows from \cite{MR2081941}*{Theorem~2.2.5} with $G=\Sp_{2g}$ and $k=\FF_\ell$).    Since $H(\ell)=\Sp_{2g}(\ZZ/\ell\ZZ)$ by hypothesis, we deduce that $H(\ell^e)=\Sp_{2g}(\ZZ/\ell^e \ZZ)$.   So the claim holds when $n$ is a prime power.

Now suppose that $n \geq 2$ is not a prime power.   By induction, we may assume that $n=m_1 m_2$ with $m_1,m_2 \geq 2$ relatively prime such that $H(m_1) = \Sp_{2g}(\ZZ/m_1\ZZ)$ and $H(m_2) = \Sp_{2g}(\ZZ/m_2\ZZ)$.   We can thus view $H(m)$ as a subgroup of $\Sp_{2g}(\ZZ/m_1\ZZ)\times \Sp_{2g}(\ZZ/m_1\ZZ)$ that projects surjectively on each of the two factors.  If $H(m)\neq \Sp_{2g}(\ZZ/m \ZZ)$, then Goursat's lemma (cf.~\cite{MR0419358}*{Lemma~3.2}) implies that $\Sp_{2g}(\ZZ/m_1\ZZ)$ and $\Sp_{2g}(\ZZ/m_2\ZZ)$ have a common simple group as a quotient;  this is impossible by Lemma~\ref{L:perfect}(\ref{L:perfect ii}).   Therefore, $H(n)=\Sp_{2g}(\ZZ/n\ZZ)$.  This completes our proof of the claim.
\end{proof}

We now prove Proposition~\ref{P:reduce to mod ell cases}.  First suppose that $\rho_{J,\ell}(G_\QQ) \supseteq \Sp_{2g}(\FF_\ell)$ for all primes $\ell$.  Let $H$ be the the commutator subgroup of $\rho_{J}(G_\QQ)$, i.e., the maximal closed normal subgroup of $\rho_{J}(G_\QQ)$ with abelian quotient.   Observe that $H$ is a closed subgroup of $\Sp_{2g}(\Zhat)$.    

Take any prime $\ell$.  The image $H(\ell)\subseteq \Sp_{2g}(\FF_\ell)$ of $H$ under the reduction modulo $\ell$ map is equal to the commutator subgroup of $\rho_{J,\ell}(G_\QQ)$.   We thus have $H(\ell) = \Sp_{2g}(\FF_\ell)$ since $\rho_{J,\ell}(G_\QQ) \supseteq \Sp_{2g}(\FF_\ell)$ by assumption and since $\Sp_{2g}(\FF_\ell)$ is perfect by Lemma~\ref{L:perfect}.   Lemma~\ref{L:Vasiu} now implies that $H=\Sp_{2g}(\Zhat)$.

Since $\rho_{J,\ell}(G_\QQ)$ contains $H=\Sp_{2g}(\Zhat)$ and $\mult(\rho_J(G_\QQ))=\Zhat^\times$ (see \S\ref{SS:background}), we deduce that $\rho_{J,\ell}(G_\QQ) = \GSp_{2g}(\Zhat)$ as desired.     Finally, the other direction of Proposition~\ref{P:reduce to mod ell cases} is easy.

\subsection{Further remarks}

We now explain the claims from Remark~\ref{R:easiest}; we will not use this later on.

\begin{prop} \label{P:remark} 
Let $A/\QQ$ be a principally polarized abelian variety of dimension $g \geq 1$.     Suppose that $g \in \{1,2\}$ or that $A$ is the Jacobian of a hyperelliptic curve.    Then $\rho_A(G_\QQ)\neq \GSp_{2g}(\Zhat)$.
\end{prop}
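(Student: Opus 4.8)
The plan is to handle the two hypotheses separately, in each case exhibiting a proper subgroup of $\GSp_{2g}(\Zhat)$ that must contain $\rho_A(G_\QQ)$; equivalently, by Proposition~\ref{P:reduce to mod ell cases} it suffices to produce a single prime $\ell$ for which $\rho_{A,\ell}(G_\QQ)$ fails to contain $\Sp_{2g}(\FF_\ell)$, but it is cleaner to argue integrally.

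First suppose $A$ is the Jacobian of a hyperelliptic curve $C/\QQ$. Then $C$ carries a hyperelliptic involution $\iota$ defined over $\QQ$, which acts on $J=\Jac(C)$ as $-1$; more to the point, the canonical embedding, or rather the degree-$2$ map $C\to\PP^1$, is Galois-equivariant, and one should instead use the structure of the $2$-torsion. The Weierstrass points of $C$ give $2g+2$ points on $\PP^1$ whose Galois action determines $J[2]$: concretely $J[2]$ is the quotient of the degree-zero part of $\FF_2^{2g+2}$ (the free $\FF_2$-module on the Weierstrass points) by the all-ones vector. Hence $\rho_{A,2}(G_\QQ)$ lands in the image of $S_{2g+2}$ acting on this $(2g)$-dimensional space, and for $g\geq 2$ the symmetric group $S_{2g+2}$ has order $(2g+2)!$ which is strictly smaller than $|\Sp_{2g}(\FF_2)|$. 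For $g=1$ the hyperelliptic case is subsumed in the case $g=1$ below. Thus $\rho_{A,2}(G_\QQ)\not\supseteq \Sp_{2g}(\FF_2)$, and so by Proposition~\ref{P:reduce to mod ell cases} (applied with the curve $C$, noting $g\geq 3$ there — for $g=2$ one argues directly that $\rho_A$ is not surjective since already $\rho_{A,2}$ is not) we conclude $\rho_A(G_\QQ)\neq\GSp_{2g}(\Zhat)$.

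Now suppose $g\in\{1,2\}$. The key point is the failure of $\Sp_{2g}(\ZZ)$ to be its own commutator subgroup in these small ranks, which was already flagged in the proof of Lemma~\ref{L:perfect}. For $g=1$, $\Sp_2(\Zhat)=\SL_2(\Zhat)$ has a nontrivial abelianization: $\SL_2(\ZZ)$ surjects onto $\ZZ/12\ZZ$, and correspondingly $\SL_2(\Zhat)^{\ab}$ is finite nontrivial (coming from $\ell=2,3$). Therefore $\GSp_2(\Zhat)=\GL_2(\Zhat)$ has abelianization strictly larger than $\Zhat^\times$ (the image of the multiplier/determinant map), while $\mult\circ\rho_A=\chi$ is the cyclotomic character, whose image is all of $\Zhat^\times$; composing $\rho_A$ with $\GL_2(\Zhat)\to\GL_2(\Zhat)^{\ab}$ and comparing with the known abelianization of $G_\QQ^{\mathrm{cyc}}$ forces the image to miss something — more simply, if $\rho_A$ were surjective then $G_\QQ$ would surject onto the finite nonabelian-free part of $\SL_2(\FF_2)\times\SL_2(\FF_3)$, and one checks this is incompatible with class field theory constraints on the $\{2,3\}$-ramified extensions of $\QQ$; for $g=1$ this is the classical fact that $\rho_E$ is never surjective for $E/\QQ$ (the mod-$2$ and mod-$3$ images are constrained), originally due to Serre. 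For $g=2$ the same mechanism applies: $\Sp_4(\ZZ)$ has $\Sp_4(\ZZ)^{\ab}=\ZZ/2\ZZ$, so $\Sp_4(\FF_2)\cong S_6$ is not perfect, whence $\rho_{A,2}(G_\QQ)$, even if it surjected onto $\Sp_4(\FF_2)$, would — combined with the analysis showing the obstruction persists $2$-adically — prevent $\rho_A$ from being surjective; again the clean statement is that the $2$-adic image cannot be all of $\GSp_4(\ZZ_2)$ because $\GSp_4(\ZZ_2)^{\ab}$ is strictly larger than $\ZZ_2^\times$.

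The main obstacle is packaging the $g\in\{1,2\}$ argument cleanly: the real content is that $\GSp_{2g}(\ZZ_\ell)^{\ab}$ strictly surjects beyond $\ZZ_\ell^\times$ for the small primes $\ell\in\{2,3\}$ in these ranks, so that surjectivity of $\rho_A$ would demand an everywhere-unramified-outside-$\ell$ abelian extension of $\QQ$ beyond the cyclotomic one, which does not exist (Kronecker–Weber / Minkowski). I would isolate this as the computation $\GSp_2(\ZZ_2)^{\ab}$, $\GSp_2(\ZZ_3)^{\ab}$, $\GSp_4(\ZZ_2)^{\ab}$ are each strictly larger than the image of $\mult$, and then invoke that $\mult\circ\rho_A$ is the full cyclotomic character together with the fact that $G_\QQ^{\ab}\cong\Zhat^\times$ to derive the contradiction. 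For the hyperelliptic case the obstacle is simply the index computation $(2g+2)! < |\Sp_{2g}(\FF_2)|$ for $g\geq 2$, which is routine, together with recalling the well-known description of $J[2]$ for hyperelliptic Jacobians.
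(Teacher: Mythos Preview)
Your approach matches the paper's: the hyperelliptic case via the $2$-torsion and the symmetric-group bound, and the $g\in\{1,2\}$ case via non-perfectness of $\Sp_{2g}(\FF_2)$ combined with Kronecker--Weber. Two points need correction.

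First, the inequality $(2g+2)!<|\Sp_{2g}(\FF_2)|$ is false for $g=2$: one has $6!=720=|\Sp_4(\FF_2)|$ (indeed $\Sp_4(\FF_2)\cong\mathfrak{S}_6$), and a genus-$2$ Jacobian can perfectly well have $\rho_{A,2}$ surjective. Your parenthetical ``for $g=2$ one argues directly that $\rho_{A,2}$ is not [surjective]'' is therefore wrong. This does not break the overall proof, since $g=2$ is already covered by the other hypothesis; but the hyperelliptic counting argument should be stated only for $g\geq 3$, as the paper does.

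Second, in the $g\in\{1,2\}$ case the phrase ``everywhere-unramified-outside-$\ell$ abelian extension'' is a red herring: $\rho_A$ is ramified at the bad primes of $A$, so you have no control on ramification. The relevant fact is simply that the extension is \emph{abelian}, hence by Kronecker--Weber cyclotomic, i.e.\ $G_{\QQ^{\ab}}=G_{\QQ^{\cyc}}$. The paper's packaging is cleaner than your several partial attempts: if $\rho_A$ were surjective then $\rho_A(G_{\QQ^{\ab}})$ is the commutator subgroup of $\GSp_{2g}(\Zhat)$ while $\rho_A(G_{\QQ^{\cyc}})=\Sp_{2g}(\Zhat)$, so these coincide; reducing mod $2$ forces $\Sp_{2g}(\FF_2)=\GSp_{2g}(\FF_2)$ to be its own commutator, contradicting its solvability for $g\leq 2$. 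Your final paragraph reaches the same contradiction via the dual statement about abelianizations, and is correct once the ramification remark is dropped.
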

\begin{proof}
Suppose that $\rho_A(G_\QQ)=\GSp_{2g}(\Zhat)$.   Since $\mult\circ \rho_A \colon G_\QQ \to \Zhat^\times$ is the cyclotomic character, we have $\rho_A(G_{\QQ^\cyc}) = \rho_A(G_\QQ)\cap \Sp_{2g}(\Zhat)=\Sp_{2g}(\Zhat)$, where $\QQ^\cyc$ is the cyclotomic extension of $\QQ$.   The group $\rho_A(G_{\QQ^\ab})$ is the commutator subgroup of $\GSp_{2g}(\Zhat)$, where $\QQ^\ab$ is the maximal abelian extension of $\QQ$.     By the Kronecker-Weber theorem, we have $\QQ^\ab=\QQ^\cyc$ and hence the commutator subgroup of $\GSp_{2g}(\Zhat)$ is equal to $\Sp_{2g}(\Zhat)$.   In particular, the commutator subgroup of $\GSp_{2g}(\ZZ/n\ZZ)$ is $\Sp_{2g}(\ZZ/n\ZZ)$ for all $n\geq 2$.   However, when $g\in \{1,2\}$, the group $\GSp_{2g}(\ZZ/2\ZZ)=\Sp_{2g}(\ZZ/2\ZZ)$ is solvable and hence its commutator is a proper subgroup of $\Sp_{2g}(\ZZ/2\ZZ)$.   

Now suppose that $A$ is the Jacobian of a hyperelliptic curve $X/\QQ$ (of genus $g\geq 3$).  We have $\rho_A(G_\QQ)=\GSp_{2g}(\Zhat)$ and hence $\rho_{A,2}(G_\QQ)=\GSp_{2g}(\FF_2)=\Sp_{2g}(\FF_2)$.   Let $P_1,\ldots, P_{2g+2} \in X(\Qbar)$ be the Weierstrass points of $X$; they are the points fixed by the hyperelliptic involution.  Let $K$ be the smallest extension of $\QQ$ for which all the points $P_1,\ldots, P_{2g+2}$ lie in $X(K)$.   The extension $K/\QQ$ is Galois and the group $\Gal(K/\QQ)$ is isomorphic to a subgroup of $\mathfrak{S}_{2g+2}$.   One can show that the $2$-torsion subgroup of $A(\Qbar)$ is generated by the points represented by the divisors $P_i - P_j$.    Therefore, $\rho_{A,2}(\Gal(\Qbar/K))=\{I\}$ and hence
\[
{\prod}_{i=1}^g \big(2^{2i-1}(2^{2i}-1)\big) = |\Sp_{2g}(\FF_2)| = |\rho_{A,2}(G_\QQ)| \leq [K:\QQ] \leq (2g+2)!.
\]
Proceeding by induction on $g$, one can check that this inequality fails for all $g\geq 3$.   Therefore, $\rho_A$ is not surjective.
\end{proof}

The case $g=1$ of Proposition~\ref{P:remark}  was first observed by Serre \cite{MR0387283}*{Prop.~22}.

Proposition~\ref{P:remark} need not hold over a general number field when $g\in \{1,2\}$.  For example, Grecius \cite{MR2778661} found an explicit elliptic curve $E/k$, with $k$ a cubic extension of $\QQ$, such that $\rho_{E}(\Gal(\kbar/k))=\GL_2(\Zhat)$.

\section{Good primes} \label{S:good}
Define the set of primes
\[
S:= \{7,11,83\};
\]
these are the primes for which $C/\QQ$, and hence $J$, has bad reduction, cf.~Lemma~\ref{L:first computation}.

\subsection{Singularities}  Let $\calC$ be the subscheme of $\PP^2_\ZZ$ defined by the equation (\ref{E:main}).    For each ring $R$, let $\calC_R$ be the scheme over $\Spec R$ obtained by base extending $\calC$ to $R$.    The curve $C/\QQ$ is of course $\calC_\QQ$.  Let $\QQ_p^\un$ be the maximal unramified extension of $\QQ_p$ in $\Qbar_p$ and let $\ZZ_p^\un$ be its local ring.   The residue field of $\ZZ_p^\un$ is an algebraic closure $\FFbar_p$ of $\FF_p$.

\begin{lemma} \label{L:first computation}
\begin{romanenum}
\item \label{L:first computation i}
For any prime $p\notin S$, the curve $C/\QQ$ has good reduction at $p$.  Moreover, $\calC_{\ZZ_p} \to \Spec \ZZ_p$ is smooth and proper.
\item \label{L:first computation ii}
Take any prime $p \in S$.   The morphism 
\[
\calC_{\ZZ_p^\un} \to \Spec \ZZ_p^\un
\]
is smooth away from a finite set $\Sigma$ of points that lie in the special fiber $\calC_{\FFbar_p}$.   The points in $\Sigma$ are all ordinary double points of $\calC_{\FFbar_p}$.    We have $|\Sigma|=1$ if $p\in\{7,11\}$ and $|\Sigma|=2$ otherwise.

For each $P \in \Sigma$, the completion of the local ring of $\calC_{\ZZ_p^\un}$ at $P$ is isomorphic as a $\ZZ_p^\un$-algebra to $\ZZ_p^\un[[x,y]]/(xy + p)$.
\end{romanenum}

\end{lemma}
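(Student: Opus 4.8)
The plan is to reduce everything to an explicit computation with the defining quartic $F(x,y,z)$ from~(\ref{E:main}) and its partial derivatives. For part~(\ref{L:first computation i}) I would observe that a plane curve $\calC_R = \{F=0\}\subseteq \PP^2_R$ is smooth over $\Spec R$ if and only if the ideal generated by $F, \partial F/\partial x, \partial F/\partial y, \partial F/\partial z$ contains a power of the irrelevant ideal, equivalently the resultant-type ideal of the partials has trivial vanishing locus over $R$. Concretely, I would compute in $\ZZ[x,y,z]$ a Gr\"obner basis (or appropriate resultants) to identify the set of primes $p$ for which $\calC_{\FFbar_p}$ acquires a singular point; \texttt{Magma} gives that this set is exactly $S=\{7,11,83\}$, and in particular shows $C/\QQ$ is smooth. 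For $p\notin S$ the scheme $\calC_{\ZZ_p}$ is then a hypersurface in $\PP^2_{\ZZ_p}$ whose fibers over $\Spec\ZZ_p$ are both smooth geometrically integral curves of genus $3$; since $\calC_{\ZZ_p}\to\Spec\ZZ_p$ is proper (being a closed subscheme of projective space) and has smooth fibers of constant dimension, flatness is automatic and the morphism is smooth and proper. This gives good reduction at $p$.

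For part~(\ref{L:first computation ii}), fix $p\in S$. The morphism $\calC_{\ZZ_p^\un}\to\Spec\ZZ_p^\un$ is smooth exactly away from the locus $\Sigma$ where $F$ and all its partials vanish modulo $p$; since the generic fiber is smooth, $\Sigma$ is contained in the special fiber $\calC_{\FFbar_p}$, and it is finite because the generic fiber is a curve (so the non-smooth locus is proper of dimension $\le 0$). I would then locate the points of $\Sigma$ explicitly: solving the system $F = \partial F/\partial x = \partial F/\partial y = \partial F/\partial z = 0$ over $\FFbar_p$ gives, by direct computation, a single point when $p\in\{7,11\}$ and two points when $p=83$ (one should also check these points are defined over $\FF_p$ or keep track of the $\FFbar_p$-count; for the inertia arguments later only the count over $\FFbar_p$ matters). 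At each such point $P$ I would pass to affine coordinates, translate $P$ to the origin, and examine the lowest-degree part of the local equation: the claim that $P$ is an ordinary double point amounts to checking that the Hessian quadratic form of $F$ at $P$ (the degree-$2$ part, the degree-$1$ part vanishing since $P$ is singular) is nondegenerate over $\FFbar_p$, i.e. defines two distinct tangent lines. This is a finite check for each of the (at most two) points at each of the three primes.

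The last assertion — that the completed local ring of $\calC_{\ZZ_p^\un}$ at $P$ is $\ZZ_p^\un[[x,y]]/(xy+p)$ — is the place that needs the most care, since it is a statement about the \emph{arithmetic} (mixed-characteristic) local structure, not just the special fiber. The approach is: choose local coordinates $x,y$ at $P$ on $\PP^2_{\ZZ_p^\un}$, so that the completed local ring of the surface $\PP^2_{\ZZ_p^\un}$ at $P$ is $\ZZ_p^\un[[x,y]]$, and $\calC$ is cut out by one element $f\in\ZZ_p^\un[[x,y]]$ reducing mod $p$ to a unit times $xy$ plus higher-order terms (using the ordinary-double-point computation above, after a linear change of the $x,y$ so that the two tangent lines become the axes). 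By the Weierstrass preparation/formal implicit function theorem, after an automorphism of $\ZZ_p^\un[[x,y]]$ one can bring $f$ to the form $xy - u\,p^{n}$ for some unit $u$ and some $n\ge 1$ (here one uses that $\calC_{\ZZ_p^\un}$ is regular of dimension $2$ at $P$, since the generic fiber is smooth and $\calC_{\ZZ_p^\un}\to\Spec\ZZ_p^\un$ is a relative curve with only this mild degeneration — regularity of the total space is the key input, and it can be verified by computing that $F,\partial F/\partial x,\partial F/\partial y,\partial F/\partial z$ together with $p$ generate the maximal ideal at $P$, equivalently that $p$ does not lie in the square of the maximal ideal). Finally $n=1$: otherwise $\ZZ_p^\un[[x,y]]/(xy - up^{n})$ would not be regular, contradicting regularity of the total space; after absorbing the unit $u$ into $x$ we get $\ZZ_p^\un[[x,y]]/(xy+p)$ as claimed. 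The main obstacle is thus verifying regularity of $\calC_{\ZZ_p^\un}$ at each $P$ and extracting $n=1$; everything else is a direct, if tedious, \texttt{Magma} computation with $F$ and its derivatives over $\FF_7,\FF_{11},\FF_{83}$.
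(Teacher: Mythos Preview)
Your plan is sound and matches the paper's approach in outline: compute the Jacobian ideal over $\ZZ$ to locate the bad primes and singular points, check the Hessian is nondegenerate at each, then establish the local normal form $xy+p$. Two differences are worth noting.

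First, the paper begins with a global linear change of coordinates $(x,y,z)\mapsto(x-69y-1389z,\,y-64z,\,z)$ chosen so that every singular point (for all three bad primes) lies on the line $x=0$; this makes the saturation of the Jacobian ideal, namely $(6391,\,x,\,83y,\,y^2+11yz+616z^2)$, easy to read off. Your direct computation without this trick would work just as well.

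Second, and more substantively: for the local normal form the paper invokes Proposition~2.4 of Freitag--Kiehl~III to bring the local equation to $Q'(x,y)+b$ with $Q'$ nondegenerate mod $p$, and then \emph{tracks through the proof} of that proposition to verify $v_p(b)=1$, using that the constant term $a$ of the affine equation satisfies $a\not\equiv 0\pmod{p^2}$. Your proposed route---first obtain the normal form $xy-up^n$, then deduce $n=1$ from regularity of $\calC_{\ZZ_p^\un}$ at $P$---is a legitimate and arguably cleaner alternative: regularity of $\ZZ_p^\un[[x,y]]/(f)$ amounts to $f\notin(p,x,y)^2$, which forces $n=1$. Note, though, that your regularity check is \emph{the same computation} the paper does directly (that $a\not\equiv 0\pmod{p^2}$), just packaged differently. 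One caution: the normal form step itself is a mixed-characteristic Morse lemma, not literally ``Weierstrass preparation'' or ``the implicit function theorem,'' and it requires a nontrivial argument; the paper's citation of Freitag--Kiehl is the appropriate reference, and you should point to it (or to the analogous statement in SGA~7) rather than leave this step vague.
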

\begin{proof}
Let $f(x,y,z)$ be the polynomial on the left hand side of (\ref{E:main}).   Define the polynomial  $g(x,y,z):=f(x-69y-1389z,y-64z,z)$.   Since $f(x,y,z)=g(x+69y+ 5805z, y+ 64, z)$, there is no harm in assuming instead that $\calC$ is the subscheme of $\PP^2_\ZZ$ defined by $g(x,y,z)=0$.

 Let $I$ be the the ideal of $\ZZ[x,y,z]$ generated by $g$ and the partial derivatives $g_x$, $g_y$ and $g_z$.  Let $I'$ be the saturation of $I$ (with respect to the irrelevant ideal of $\ZZ[x,y,z]$).    One can show, as in the following \texttt{Magma} code, that 
\[
I'= \big(6391, \, x,\,83y, \,y^2 + 11yz + 616z^2\big).
\]
{\small
\begin{verbatimtab}
	R<x,y,z>:=PolynomialRing(Integers(),3);
	f:=x^3*y-x^2*y^2+x^2*z^2+x*y^3-x*y*z^2-x*z^3-y^4+y^3*z-y^2*z^2-y*z^3;
	g:=Evaluate(f,[x-69*y-1389*z,y-64*z,z]);
	I:=ideal<R|[g,Derivative(g,x),Derivative(g,y),Derivative(g,z)]>; 
	Saturation(I) eq ideal<R|[6391,x,83*y,y^2+11*y*z+616*z^2]>;
\end{verbatimtab}
}
That the primes divisors of $6391$ are the elements of $S$ is enough to show that $\calC$ is smooth away from the fibers over the primes $p\in S$.  Part (\ref{L:first computation i}) is an immediate consequence.

From our description of $I'$, we find that the only singular points of $\calC$ are:
\begin{itemize}
\item 
the point $(0:0:1)$ in the fiber $\calC_{\FF_7}$,
\item
the point $(0:0:1)$ in the fiber $\calC_{\FF_{11}}$,
\item
the points $(0:32:1)$ and $(0:40:1)$ in the fiber $\calC_{\FF_{83}}$.
\end{itemize}

Take any singular point $P$ of $\calC$.   Take a prime $p\in S$ and integer $y_0\in \{0,32,40\}$ such that $P$ is the image of $(0:y_0:1)$ modulo $p$.   Define the polynomial
\[
F(x,y):= g(x,y+y_0,1) \in \ZZ[x,y].
\]
The completion of the local ring of $\calC_{\ZZ_p^\un}$ at $P$ is thus isomorphic to $\ZZ_p^\un[[x,y]]/(F(x,y))$.

It is straightforward to check that in $\ZZ[x,y]$, we have
\[
F(x,y) \equiv  a + a_1 x +a_2 y + Q(x,y) \pmod{(x,y)^3}
\]
where is $Q(x,y) \in \ZZ[x,y]$ is a quadratic form whose image in $\FF_p[x,y]$ is also non-degenerate, and $a\equiv a_1\equiv  a_2 \equiv 0 \pmod{p}$ with $a\not\equiv 0 \pmod{p^2}$.

Proposition~2.4 of \cite{MR926276}*{III} shows that we have an isomorphism of $\ZZ_p$-algebras
\[
\ZZ_p[[x,y]]/(F(x,y)) \cong \ZZ_p[[x,y]]/(Q'(x,y)+b),
\]
where $Q'(x,y)\in \ZZ_p[x,y]$ is a quadratic form whose image in $\FF_p[x,y]$ is non-degenerate and $b\in \ZZ_p$.   More precisely, the proof of Proposition~2.4 of \cite{MR926276}*{III} shows that there are $\alpha_1,\alpha_2 \in p\ZZ_p$ and $h_1,h_2 \in (x,y)^3 \subseteq \ZZ_p[[x,y]]$ such that $F(x+\alpha_1+h_1,y+\alpha_2+h_2)$ is of the desired form $Q'(x,y)+b$;  hence $b$ has $p$-adic valuation $1$ since $a$ has $p$-adic valuation $1$ and $a_1,a_2,\alpha_2,\alpha_2 \in p\ZZ_p$.

Since $\ZZ_p^\un$ is strictly Henselian, there is a matrix $A\in \GL_2(\ZZ_p^\un)$ such that $Q'(A_{1,1} x + A_{1,2} y, A_{2,1}x + A_{2,2} y) = xy$, cf.~Proposition~2.2 of \cite{MR926276}*{III}.   We deduce that the completion of the local ring of $\calC_{\ZZ_p^\un}$ is isomorphic as a $\ZZ_p^\un$-algebra to $\ZZ_p^\un[[x,y]]/(xy+b)$.  After replacing $x$ by itself times an appropriate unit of $\ZZ_p^\un$, we may further assume that $b=p$.
\end{proof}

\subsection{Frobenius polynomials}

Now take any prime $p\notin S$.  Let $C_p$ be the curve in $\PP^2_{\FF_p}$ defined by (\ref{E:main}).   By Lemma~\ref{L:first computation}, we find that $C_p/\FF_p$ is a smooth projective curve of genus $3$.   The abelian variety $J$ thus has good reduction modulo $p$ and its reduction $J_p/\FF_p$ is equal to the Jacobian of $C_p$.\\

We take $P_p(T)$ to be the polynomial $P_{J_p}(T)$ from \S\ref{SS:compatibility}; it is monic with integer coefficients and has degree $6$. From \S\ref{SS:compatibility}, we find that for each prime $\ell \neq p$, we have
\[
\det(TI - \rho_{J,\ell}(\Frob_p)) \equiv P_p(T) \pmod{\ell}.
\]
 Using (\ref{E:functional}), we find that
\[
P_p(T) = T^6 + a_p T^5 + b_p T^4 +c_p T^3 + p b_p T^2 +p^2 a_p T + p^3
\]
for unique integers $a_p$, $b_p$ and $c_p$.

We have computed $P_p(T)$ for a few small primes $p\notin S$.
{\small
\begin{align*}
P_{ 2 }(T)&= T^6 + 3 T^5 + 6 T^4 + 9 T^3 + 12 T^2 + 12 T + 8\\
P_{ 3 }(T)&= T^6 + T^5 + 2 T^4 + 3 T^3 + 6 T^2 + 9 T + 27\\
P_{ 5 }(T)&= T^6 + 4 T^5 + 10 T^4 + 17 T^3 + 50 T^2 + 100 T + 125\\
P_{ 17 }(T)&= T^6 + 2 T^5 + 9 T^4 + 120 T^3 + 153 T^2 + 578 T + 4913\\
P_{ 19 }(T)&= T^6 + 4 T^5 + 18 T^4 + 91 T^3 + 342 T^2 + 1444 T + 6859\\
P_{ 23 }(T)&= T^6 + 5 T^5 + 19 T^4 + 53 T^3 + 437 T^2 + 2645 T + 12167\\
P_{ 41 }(T)&= T^6 + 42 T^4 - 212 T^3 + 1722 T^2 + 68921\\
P_{ 43 }(T)&= T^6 + 3 T^5 - T^4 - 43 T^3 - 43 T^2 + 5547 T + 79507\\
P_{ 73 }(T)&= T^6 - 4 T^5 - 43 T^4 + 581 T^3 - 3139 T^2 - 21316 T + 389017
\end{align*}
}
One way to compute $P_p(T)$ is by using the zeta function interpretation in \S\ref{SS:compatibility}.  After computing $|C_p(\FF)|$, $|C_p(\FF_{p^2})|$ and $|C_p(\FF_{p^3})|$, one can determine $a_p$, $b_p$ and $c_p$ (and hence $P_p(T)$) from the congruence
\[
1+a_p T + b_pT^2 + c_p T^3 \equiv (1-T)(1-pT) \exp\Big(\sum_{m=1}^3 |C_p(\FF_{p^m})|\cdot T^m/m \Big)  \mod{T^3}.
\]

The above explicit polynomials $P_p(T)$ have been computed using the follow \texttt{Magma} code:
{\small
\begin{verbatimtab}
	Pol<T>:=PolynomialRing(Rationals());
	for p in [2,3,5,17,19,23,41,43,73] do
	    P2<x,y,z>:=ProjectiveSpace(GF(p),2);	
            f:=x^3*y-x^2*y^2+x^2*z^2+x*y^3-x*y*z^2-x*z^3-y^4+y^3*z-y^2*z^2-y*z^3;
	    Cp:=Curve(P2,f); 
	    P:=Pol!LPolynomial(Cp); 
	    print T^6*Evaluate(P,1/T);
	end for;
\end{verbatimtab}
}
\subsection{Maximal image modulo $2$}

We now show that $\rho_{J,2}$ is surjective.

\begin{lemma} \label{L:mod 2 case}
We have $\rho_{J,2}(G_\QQ)=\GSp_6(\FF_2)$.
\end{lemma}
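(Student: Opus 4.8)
The key observation is that $\FF_2^\times$ is trivial, so $\GSp_6(\FF_2)=\Sp_6(\FF_2)$; this is a finite simple group of order $1451520=2^9\cdot 3^4\cdot 5\cdot 7$. Write $G:=\rho_{J,2}(G_\QQ)$, a subgroup of $\Sp_6(\FF_2)$; I want to show $G=\Sp_6(\FF_2)$. Since every proper subgroup of a finite group is contained in a maximal one, it suffices to check that $G$ lies in no maximal subgroup of $\Sp_6(\FF_2)$. The plan is to rule out all of these using the conjugacy-class information supplied by the Frobenius polynomials $P_p(T)$ computed above.

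For each prime $p\notin S$, the element $\rho_{J,2}(\Frob_p)$ has characteristic polynomial $P_p(T)\bmod 2$ by \S\ref{SS:compatibility}, and its order equals the least common multiple of the multiplicative orders of the roots of the irreducible factors of $P_p(T)\bmod 2$. Reading off the table: $P_5(T)\equiv T^6+T^3+1\pmod 2$, which is the ninth cyclotomic polynomial and is irreducible over $\FF_2$ (the order of $2$ modulo $9$ is $6$), so $\rho_{J,2}(\Frob_5)$ has order $9$ and acts irreducibly on $\FF_2^6$; in particular $\rho_{J,2}$ is irreducible. Likewise $P_{23}(T)\equiv T^6+T^5+T^4+T^3+T^2+T+1=(T^3+T+1)(T^3+T^2+1)\pmod 2$, whose roots are the primitive $7$th roots of unity, giving an element of order $7$, and $P_{73}(T)\equiv T^6+T^4+T^3+T^2+1=(T^2+T+1)(T^4+T^3+T^2+T+1)\pmod 2$, a product of the third and fifth cyclotomic polynomials, giving an element of order $15$. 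Hence $\operatorname{lcm}(9,7,15)=315$ divides $|G|$.

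Next I would invoke the classification of maximal subgroups of $\Sp_6(\FF_2)$ (as found in the \emph{Atlas of Finite Groups}): up to conjugacy there are eight, with orders $2^7\cdot 3^4\cdot 5$ ($\PSp_4(3){:}2$), $2^7\cdot 3^2\cdot 5\cdot 7$ ($\mathfrak{S}_8$), $2^9\cdot 3^2\cdot 5$ ($2^5{:}\mathfrak{S}_6$), $2^6\cdot 3^3\cdot 7$ ($G_2(2)$), $2^9\cdot 3\cdot 7$ ($2^6{:}\GL_3(\FF_2)$), $2^9\cdot 3^2$, $2^5\cdot 3^3\cdot 5$ ($\mathfrak{S}_3\times\mathfrak{S}_6$) and $2^3\cdot 3^3\cdot 7$ ($\PSL_2(8){:}3$). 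Of these, the only one whose order is divisible by $315$ is $\mathfrak{S}_8$. Therefore either $G=\Sp_6(\FF_2)$ or $G$ is contained in a conjugate of $\mathfrak{S}_8$. The second possibility is excluded because a Sylow $3$-subgroup of $\mathfrak{S}_8$ is elementary abelian of order $9$, so $\mathfrak{S}_8$ has no element of order $9$, whereas $\rho_{J,2}(\Frob_5)\in G$ does. Hence $G=\Sp_6(\FF_2)=\GSp_6(\FF_2)$.

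I do not expect any serious obstacle here: this is a finite computation together with bookkeeping against the list of maximal subgroups. The only steps needing care are the correct reading of the element orders attached to the reductions $P_p(T)\bmod 2$ and the invocation of the maximal subgroup classification (one could instead quote that $\Sp_6(\FF_2)$ has no proper subgroup containing elements of orders $9$, $7$ and $15$ simultaneously). A more computational alternative, in the spirit of the rest of the paper, would be to have \texttt{Magma} compute the Galois group of the splitting field of the $28$ bitangents of $C$ --- a set on which $\Sp_6(\FF_2)$ acts faithfully via $\rho_{J,2}$, through its action on the odd theta characteristics --- and verify that this group has order $1451520$.
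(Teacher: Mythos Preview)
Your argument is correct and follows the same strategy as the paper: read off the orders of a few Frobenius images from $P_p(T)\bmod 2$ and test these against the list of maximal subgroups of $\Sp_6(\FF_2)$ from the Atlas. The paper uses only $p=23$ and $p=73$, obtaining elements of orders $7$ and $15$, and then asserts that $\mathfrak{S}_8$---the unique maximal subgroup whose order is divisible by $7\cdot 15$---``has no element of order $15$.'' That last assertion is actually a slip: the permutation of cycle type $3+5$ in $\mathfrak{S}_8$ has order $15$, so orders $7$ and $15$ alone do not suffice to exclude $\mathfrak{S}_8$. Your inclusion of the extra prime $p=5$, giving an element of order $9$ via the irreducible polynomial $\Phi_9(T)=T^6+T^3+1$, is exactly what closes this gap, since the Sylow $3$-subgroup of $\mathfrak{S}_8$ is $C_3\times C_3$ and hence contains no element of order $9$. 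So your version is not merely a variant of the paper's proof but a repair of it; the alternative you mention at the end (computing the Galois action on the $28$ bitangents directly) would also work.
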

\begin{proof} 
Define $G:=\rho_{J,2}(G_\QQ)$; it is a subgroup of $\GSp_6(\FF_2)=\Sp_6(\FF_2)$.     Consider an odd prime $p \notin S$ and let $f_p(x)\in \FF_2[x]$ be the reduction of $P_p(x)$ modulo $2$.    Assume that $f_p(x)$ is separable and hence it is also the minimal polynomial of $g_p:=\rho_{J,2}(\Frob_p)$.   Therefore, the order of $g_p$ is the smallest integer $n_p \geq 1$ for which $f_p(x)$ divides $x^n-1 \in \FF_2[x]$.   From \S\ref{S:good}, we find that $f_{23}(x) = x^6 + x^5 + x^4 + x^3 + x^2 + x + 1$ and $f_{73}(x) = (x^2 + x + 1)(x^4 + x^3 + x^2 + x + 1)$.   One can then check that $n_{23}=7$ and $n_{73}=15$, and thus $G$ contains elements of order $7$ and $15$.

A computation shows that $\Sp_6(\FF_2)$ has no maximal subgroups with elements of order $ 7$ and $15$; therefore, $G=\Sp_6(\FF_2)$.  Moreover, any maximal subgroup of $G$ that has order divisible by $7\cdot 15$ is isomorphic to $\mathfrak{S}_8$ and hence has no element of order $15$ (this can be easily deduced from the description of maximal subgroups of $\Sp_6(\FF_2)$ in \cite{Atlas}*{p.46}).  
\end{proof}

\section{Inertia at bad primes} \label{S:bad primes}
For a prime $p\in S$, let $I_p$ be an inertia subgroup of $G_\QQ$ at the prime $p$.  The goal of this section is to prove the following using the Picard-Lefschetz formula.

\begin{prop} \label{P:bad primes}
For primes $p \in S$ and $\ell \neq p$, the group $\rho_{J,\ell}(I_p)$ is cyclic of order $\ell$.  If $p\in\{7,11\}$, then $\rho_{J,\ell}(I_p)$ is generated by a transvection.
\end{prop}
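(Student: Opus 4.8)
The plan is to combine the explicit semistable model from Lemma~\ref{L:first computation}(\ref{L:first computation ii}) with the monodromy (Picard--Lefschetz) theory for semistable curves. Fix $p\in S$ and a prime $\ell\neq p$, and set $\delta:=|\Sigma|$, so $\delta=1$ when $p\in\{7,11\}$ and $\delta=2$ when $p=83$. Since only the $I_p$-action is at issue, I would base change to $K:=\QQ_p^\un$ with ring of integers $R:=\ZZ_p^\un$, identifying $I_p$ with $\Gal(\overline K/K)$. By Lemma~\ref{L:first computation}(\ref{L:first computation ii}), $\calC_R$ is proper and flat over $R$ and regular (its completed local ring at each singular point is $R[[x,y]]/(xy+p)$, a $2$-dimensional regular local ring), and its special fibre $\calC_{\FFbar_p}$ is reduced with exactly $\delta$ ordinary double points and no other singularities. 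Moreover $\calC_{\FFbar_p}$ is geometrically irreducible: I would check this by factoring (\ref{E:main}) modulo $p$, and it also follows from the fact that a reducible plane quartic has a singular locus of length at least $3$ (it contains the intersection of two plane curves of degrees summing to $4$), which cannot consist of $\delta\le 2$ ordinary double points. Hence $\calC_R$ is a semistable model of $C_K$, and the dual graph of $\calC_{\FFbar_p}$ is one vertex with $\delta$ loops, of first Betti number $\delta$.

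Next I would invoke the monodromy theory. Since $C_K$ admits the semistable model $\calC_R$, its Jacobian $J_K$ has semistable reduction. Writing $T_\ell J:=\varprojlim_n J[\ell^n]$, Grothendieck's theory of the monodromy pairing (equivalently, in this curve case, the Picard--Lefschetz formula) says that the $I_p$-action on $T_\ell J$ is unipotent and factors through the maximal pro-$\ell$ tame quotient $t_\ell\colon I_p\twoheadrightarrow\ZZ_\ell(1)$; a topological generator $\sigma_0$ of $\ZZ_\ell(1)$ acts as $1+N$ with $N\colon T_\ell J\to T_\ell J$ satisfying $N^2=0$, and $\im N$ equals the toric part $T_\ell T$ of $T_\ell J$, whose $\ZZ_\ell$-rank is the toric rank of the special fibre of the N\'eron model of $J_K$. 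Because $\calC_R$ is semistable with geometrically irreducible special fibre carrying $\delta$ nodes, this toric rank equals the first Betti number $\delta$ of the dual graph of $\calC_{\FFbar_p}$. The key point for what follows is that $\im N=T_\ell T$ is a \emph{saturated} $\ZZ_\ell$-submodule of $T_\ell J$: from the rigid-analytic uniformization $0\to M\to E\to J_K\to 0$, where $E$ is an extension of an abelian variety $B$ with good reduction by the $\delta$-dimensional torus $T$ and $M\cong\ZZ^\delta$, one sees that $T_\ell J/T_\ell E\cong M\otimes\ZZ_\ell$ and $T_\ell E/T_\ell T\cong T_\ell B$ are torsion-free; equivalently, the vanishing cycles of the $\delta$ nodes span a primitive sublattice because these nodes are non-separating.

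Finally I would reduce modulo $\ell$. Let $\overline N$ be the induced endomorphism of $J[\ell]=T_\ell J/\ell\, T_\ell J$. Saturation of $\im N$ gives $\rank\overline N=\delta$; in particular $\overline N\neq 0$. Since $p\neq\ell$ the mod-$\ell$ cyclotomic character is unramified at $p$, so $\rho_{J,\ell}(I_p)\subseteq\Sp_6(\FF_\ell)$, and $\rho_{J,\ell}(I_p)$, being the image of the procyclic group $\ZZ_\ell(1)$, is the cyclic group generated by $\rho_{J,\ell}(\sigma_0)=1+\overline N$. From $\overline N^2=0$ we get $(1+\overline N)^k=1+k\overline N$, which is the identity exactly when $\ell\mid k$; hence $\rho_{J,\ell}(I_p)$ is cyclic of order $\ell$. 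When $\delta=1$, $\overline N$ has rank $1$, the element $1+\overline N$ is unipotent (so $\det(1+\overline N)=1$), and its fixed space $J[\ell]^{1+\overline N=1}=\ker\overline N$ has codimension $1$, so $1+\overline N$ is a transvection generating $\rho_{J,\ell}(I_p)$, as claimed.

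The step I expect to need the most care is the uniformity in $\ell$: the conclusion must hold for \emph{every} prime $\ell\neq p$, so the mod-$\ell$ monodromy $\overline N$ must not vanish --- and, for $p\in\{7,11\}$, must have rank exactly $1$ --- for any $\ell$. This is precisely what the saturation of the toric part $\im N=T_\ell T$ in $T_\ell J$ provides, and it is where the hypotheses of Lemma~\ref{L:first computation}(\ref{L:first computation ii}) (ordinary double points, on a geometrically irreducible special fibre) enter essentially. A subsidiary point to spell out is the identification of $\rho_{J,\ell}(I_p)$ with $\{1+c\overline N:c\in\FF_\ell\}$, which rests on the $I_p$-action factoring through the procyclic quotient $\ZZ_\ell(1)$, itself a consequence of semistable reduction.
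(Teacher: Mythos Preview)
Your argument is correct and reaches the same conclusion, but it follows a genuinely different route from the paper. The paper works directly with mod-$\ell$ \'etale cohomology: it identifies $J[\ell]$ with $H^1(C_{\Kbar},\FF_\ell)$ and applies the Picard--Lefschetz formula \emph{over $\FF_\ell$}, obtaining the explicit description $\sigma(v)=v-\sum_{x\in\Sigma}\varepsilon_x(\sigma)\ang{v,\delta_x}\delta_x$; the precise local form $R[[x,y]]/(xy+p)$ from Lemma~\ref{L:first computation}(\ref{L:first computation ii}) is then used to show that each $\varepsilon_x$ equals the surjective character $\varepsilon\colon G_K\to\FF_\ell(1)$ (rather than a power of it), which gives nontriviality and the order-$\ell$ statement in one stroke, and the single vanishing cycle when $|\Sigma|=1$ yields the transvection. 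Your approach instead works integrally on $T_\ell J$ via Grothendieck's monodromy filtration and the N\'eron model, and only at the end reduces modulo $\ell$; the same local input enters as regularity of $\calC_R$, which together with geometric irreducibility of the special fibre forces the component group to be trivial and hence $\im N=T_\ell T$ to be saturated. The paper's route is shorter and more self-contained (one formula does everything), while yours is more structural and makes transparent exactly which invariant---the $\ell$-part of the component group---controls uniformity in $\ell$.

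One point in your write-up deserves tightening: the equality $\im N=T_\ell T$ is \emph{not} part of the general statement of Grothendieck's theory (in general one only has $\im N\subseteq T_\ell T$, with cokernel the $\ell$-primary part of the N\'eron component group). You have all the ingredients---regularity of $\calC_R$ and geometric irreducibility of the special fibre give trivial component group by Raynaud's description---but the sentence asserting $\im N=T_\ell T$ should point to this rather than present it as automatic; your parenthetical about non-separating nodes gestures at the right idea but does not by itself guarantee primitivity of the span when $\delta>1$.
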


Fix a prime $p \in S$.   Set $R=\ZZ_p^\un$ and let $K=\QQ_p^\un$ be its quotient field.   Fix an algebraic closure $\Kbar$ of $K$.   With a choice of embedding $\Qbar\hookrightarrow \Kbar$, the restriction map gives an injective homomorphism $G_K:=\Gal(\Kbar/K) \hookrightarrow G_\QQ$ that we can view as an inclusion.   The group $G_K$ is then conjugate to $I_p$ in $G_\QQ$.      

Fix a prime $\ell\neq p$.  So to prove Proposition~\ref{P:bad primes}, we need only consider the action of $G_K$ on $J[\ell]\subseteq J(\Kbar)$.  Define the $\FF_\ell$-vector space $V:=H^1(C_{\Kbar},\FF_\ell)$;  for background on \'etale cohomology, see \cite{MR559531}, \cite{MR926276} or \cite{MR0463174}.   There is a natural action of $G_K$ on $V$ that we can express in terms of a representation
\[
\rho\colon G_K \to \Aut(V).
\]
Let $\FF_\ell(1)$ be the group of $\ell$-th roots of unity in $\Kbar$ and let $\FF_\ell(-1)$ be the $\FF_\ell$-dual of $\FF_\ell(1)$.  Evaluation gives a natural isomorphism $\FF_\ell(1) \otimes_{\FF_\ell} \FF_\ell(-1) = \FF_\ell$.

One knows that $J[\ell]$ is isomorphic to the \'etale cohomology group $H^1(C_{\Kbar},\FF_\ell(1))$ as an $\FF_\ell[G_K]$-module.  The group $G_K$ acts trivially on $\FF_\ell(1)$ since $\ell\neq p$, so $J[\ell]$ and $V$ are isomorphism $\FF_\ell[G_K]$-modules.   Therefore, $\rho$ and $\rho_{J,\ell}|_{G_K}$ are isomorphic representations.  It thus suffices to prove that $\rho(G_K)$ is a group of order $\ell$ and that it is generated by a transvection when $p\in \{7,11\}$.  

 We have an alternating pairing 
\[
\ang{\,,\,}\colon V \times V \xrightarrow{\cup} H^2(C_{\Kbar},\FF_\ell\otimes_{\FF_\ell} \FF_\ell) = H^2(C_{\Kbar},\FF_\ell) \xrightarrow{\sim} \FF_\ell(-1),
\]
where we are composing the cup product and trace map.  The $G_K$-action on $V$ respects the pairing $\ang{\,,\,}$, i.e., $\ang{\sigma(v),\sigma(w)}=\sigma(\ang{v,w})=\ang{v,w}$ for all $\sigma\in G_K$ and $v,w\in V$.   The pairing $\ang{\,,\,}$ is non-degenerate (after taking Tate twists, we can identify this pairing with the Weil pairing on $J[\ell]$).  \\

 The morphism $\calC_{R} \to \Spec R$ is a proper and flat morphism of relative dimension $1$.   From Lemma~\ref{L:first computation}(\ref{L:first computation ii}), we know that  $\calC_K$ is smooth and that $\calC_{\FFbar_p}$ is smooth away from a finite set $\Sigma$ of ordinary double points.

 The \defi{Picard-Lefschetz formula} (see \cite{SGA7-2}*{XV~Th\'eor\`eme~3.4})  shows that there are non-zero and pairwise orthogonal \emph{vanishing cycles} $\{\delta_x\}_{x\in \Sigma}$ in $V$ such that for $v\in V$ and $\sigma\in G_K$, we have
\[
\sigma(v) = v - \sum_{x\in \Sigma} \varepsilon_x(\sigma) \ang{v,\delta_x}\cdot \delta_x,
\]
where $\varepsilon_x \colon G_K \to \FF_\ell(1)$ is a certain homomorphism  and where we view $\varepsilon_x(\sigma) \ang{v,\delta_x}$ as an element of $\FF_\ell(1) \otimes_{\FF_\ell} \FF_\ell(-1) = \FF_\ell$.

Let $\varepsilon \colon G_K \to \FF_\ell(1)$ be the surjective homomorphism that satisfies $\sigma(\sqrt[\ell]{p})= \varepsilon(\sigma) \sqrt[\ell]{p}$ for all $\sigma\in G_K$.   From Lemma~\ref{L:first computation}(\ref{L:first computation ii}), we find that the completion of the local ring of $\calC_{R}$ at a point $x\in \Sigma$ is isomorphic as an $R$-algebra to $R[[x,y]]/(xy + p)$.   From \cite{SGA7-2}*{XV~\S3.3}, we find that $\varepsilon_x=\varepsilon$ (in general, you would need to raise $\varepsilon$ to some power).   Therefore,
\begin{align} \label{E:PL}
\sigma(v) = v - \sum_{x\in \Sigma} \varepsilon(\sigma) \ang{v,\delta_x}\cdot \delta_x
\end{align}
for $v\in V$ and $\sigma\in G_K$.   The representation $\rho$ thus factors through the order $\ell$ group $\Gal(K(\sqrt[\ell]{p})/K)$.  Therefore, $\rho(G_K)$ is a group of order $1$ or $\ell$.  Since $\varepsilon\neq 1$ and $v\mapsto \ang{v,\delta_x}$ is non-trivial, we deduce from (\ref{E:PL}) that $\rho(G_K)$  is a non-trivial group  and hence is a cyclic of order $\ell$.

Now suppose that $p\in\{7,11\}$.  Fix any $\sigma_0 \in G_K$ with $\rho(\sigma_0)\neq 1$.  It remains to prove that $\rho(\sigma_0)$ is a transvection.  Since $\rho(\sigma_0)\neq 1$ has order $\ell$, it suffices to prove that $\sigma_0$ fixes an $\FF_\ell$-subspace of $V$ of dimension $\dim_{\FF_\ell} V -1$.  Since $p\in \{7,11\}$, we have $|\Sigma|=1$ by Lemma~\ref{L:first computation}(\ref{L:first computation ii}).   We thus have 
\begin{align} \label{E:PL 2}
\sigma_0(v) = v - \varepsilon(\sigma_0) \ang{v,\delta_x}\cdot \delta_x,
\end{align}
where $x$ is the unique element of $\Sigma$.  Let $W$ be the subspace of $V$ consisting of $v\in V$ for which $\ang{v,\delta_x}$ is trivial; it has $\FF_\ell$-dimension $\dim_{\FF_\ell} V - 1$ since the pairing is non-degenerate and $\delta_x\neq 0$.   By (\ref{E:PL 2}), we deduce that $\sigma_0(v)=v$ for all $v\in W$.  This completes the proof of Proposition~\ref{P:bad primes}.

\section{Inertia at $\ell$} \label{S:inertia at ell}

Fix an odd prime $\ell \notin S$ and let $I_\ell$ be any inertia subgroup of $G_\QQ=\Gal(\Qbar/\QQ)$ at the prime $\ell$.   In this section, we give some information on how $I_\ell$ acts on $J[\ell]$.\\

We first need to recall some background on tame inertia groups and tame inertia weights,  see \S1 of \cite{MR0387283} for more details.  Let $\calP \subseteq I_\ell$ be the \defi{wild inertia subgroup} of $I_\ell$; it is the largest pro-$\ell$ subgroup of $I_\ell$.   The quotient $I_{\ell}^t:= I_\ell /\calP$ is the \defi{tame inertia group} for the prime $\ell$.   For an integer $d\geq 1$ relatively prime to $\ell$, let $\mu_d$ be the $d$-th roots of unity in $\Qbar$.   The map 
\[
\theta_{d}\colon I_\ell \to \mu_d,\quad \sigma\mapsto \sigma(\sqrt[d]{\ell})/\sqrt[d]{\ell}
\]
is a surjective homomorphism which factors through $I^t_\ell$.   Taking the inverse limit over all $d$ relatively prime to $\ell$ (ordered by divisibility), we obtain an isomorphism 
\[
I_\ell^t \xrightarrow{\sim} {\varprojlim}_d \, \mu_d.
\]   
By composing the homomorphism $\theta_d$ with reduction modulo a place of $\Qbar$ lying over $\ell$, we obtain a character $I_\ell^t \to \FFbar_\ell^\times$.   For an integer $m\geq 1$, setting $d:=\ell^m-1$ gives a surjective character 
\[
\phi\colon I_\ell^t \to \FF_{\ell^m}^\times.
\] 
The \defi{fundamental characters} of level $m$ are the $m$ characters $I_\ell^t \to \FF_{\ell^m}^\times$ obtained by composing $\phi$ with the isomorphisms of $\FF_{\ell^m}^\times$ arising from field automorphisms of $\FF_{\ell^m}$; they are $\phi$, $\phi^\ell, \ldots, \phi^{\ell^{m-1}}$.\\

Let $V$ be an irreducible $\FF_\ell[I_\ell]$-module and set $m:=\dim_{\FF_\ell}(V)$.  There is then an isomorphism $V\cong \FF_{\ell^m}$ of $\FF_\ell$-vector spaces such that the induced character 
\[
I_\ell \to \Aut_{\FF_\ell}(V) \cong \Aut_{\FF_\ell}(\FF_{\ell^m})
\]
has image in $\FF_{\ell^m}^\times$, where the first map gives the action of $I_\ell$ on $V$ (here we use scalar multiplication to identify $\FF_{\ell^m}^\times$ with a subgroup of $\Aut_{\FF_\ell}(\FF_{\ell^m})$).  This representation arising from $V$ thus factors through a character $\alpha\colon I_\ell^t \to \FF_{\ell^m}^\times$.    Given a fundamental character $\phi\colon I_\ell^t\to \FF_{\ell^m}^\times$ of level $m$, there are unique integers $0\leq e_1,\ldots, e_m \leq \ell-1$ such that 
\begin{equation} \label{E:amplitude}
\alpha= \phi^{e_1+e_2\ell + \cdots + e_m \ell^{m-1}}.
\end{equation}
The integers $e_1,\ldots, e_m$ are called the \defi{tame inertia weights} of $V$.

Let $V$ be an $\FF_\ell[I_\ell]$-module with $V$ a finite dimensional $\FF_\ell$-vector space.  Let $V_1,\ldots, V_r$ be the composition factors of $V$ as an $\FF_\ell[I_\ell]$-module.   An integer is a \defi{tame inertia weight} for $V$ if it is a tame inertia weight for at least one of the $V_i$.

We now consider the representations occurring in this paper. 

\begin{prop} \label{P:amplitude}
Fix an odd prime $\ell \notin S$.  The only possible tame inertia weights for the $\FF_\ell[I_\ell]$-module $J[\ell]$ are $0$ and $1$.
\end{prop}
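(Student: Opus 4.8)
The plan is to invoke the general theory relating tame inertia weights of the mod-$\ell$ étale cohomology of a variety with good reduction at $\ell$ to its Hodge numbers, as formalized by Fontaine, Lafaille, Raynaud, and in the form most convenient here, by Caruso--Liu or the crystalline comparison theorems. Since $\ell \notin S$, the curve $C$, and hence its Jacobian $J$, has good reduction at $\ell$, so $J[\ell] \cong H^1_{\mathrm{\acute et}}(C_{\Qbar},\FF_\ell(1))$ is a subquotient (in fact the Tate twist) of the first étale cohomology of a smooth proper $\ZZ_\ell$-scheme of relative dimension $1$. The Fontaine--Laffaille bound then says that the tame inertia weights of $H^1_{\mathrm{\acute et}}(C_{\Qbar},\FF_\ell)$ lie in the interval $[0, 1]$, coming from the Hodge--Tate weights $\{0,1\}$ of $H^1$ of a curve. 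The Tate twist by $\FF_\ell(1)$ shifts all weights by $1$ in one convention or $-1$ in another; in Serre's normalization (as in \S1 of \cite{MR0387283}), the cyclotomic character $\chi_\ell$ has tame inertia weight $1$, and one checks that $J[\ell] = H^1_{\mathrm{\acute et}}(C_{\Qbar},\FF_\ell)^\vee$ as Galois modules, so its weights are again in $\{0,1\}$. The hypothesis $\ell$ odd is needed because the Fontaine--Laffaille range applies when the span of Hodge--Tate weights is less than $\ell - 1$; here the span is $1$, so we only need $\ell > 2$, consistent with the hypothesis.

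The key steps, in order, are: (i) record that $J$ has good reduction at $\ell$, citing Lemma~\ref{L:first computation}(\ref{L:first computation i}) applied to the prime $\ell \notin S$, so that $\rho_{J,\ell}|_{I_\ell}$ arises from (the dual of) $H^1_{\mathrm{\acute et}}$ of a smooth proper curve over $\ZZ_\ell$; (ii) invoke the comparison between étale and crystalline cohomology in the Fontaine--Laffaille range — since $\dim C = 1$, the Hodge filtration on $H^1_{\mathrm{dR}}$ has jumps only in degrees $0$ and $1$, and $1 < \ell - 1$ for $\ell \ge 3$ — to conclude that the tame inertia weights of $H^1_{\mathrm{\acute et}}(C_{\Qbar},\FF_\ell)$ all lie in $\{0,1\}$; (iii) carefully track the effect of the Tate twist and of passing to the $\FF_\ell$-dual: $J[\ell] \simeq H^1_{\mathrm{\acute et}}(C_{\Qbar},\mu_\ell)$, and since $I_\ell$ acts on $\mu_\ell$ through $\chi_\ell$ of weight $1$ while Poincaré duality pairs $H^1$ with $H^1(1)$, the composition factors of $J[\ell]$ have the same weight set $\{0,1\}$ as those of $H^1_{\mathrm{\acute et}}(C_{\Qbar},\FF_\ell)$; (iv) conclude.

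The main obstacle is step (iii): getting the Tate twists and duals exactly right so that the weight set does not accidentally become $\{1,2\}$ or some shifted interval. This is purely bookkeeping with Serre's sign conventions for tame inertia weights (some authors use the opposite sign, and whether one takes $H^1$ or its dual matters), but it must be done correctly, and it is worth stating explicitly which normalization is in force — namely the one from \S\ref{S:inertia at ell}, under which $\theta_d(\sigma) = \sigma(\sqrt[d]{\ell})/\sqrt[d]{\ell}$ and hence $\chi_\ell|_{I_\ell} = \phi^{\ell-1} \cdots$, i.e. the cyclotomic character has weight $1$. A secondary point is to make sure the Fontaine--Laffaille hypothesis is genuinely met: one needs the integer ring to be absolutely unramified (it is, $\QQ_\ell$ is) and the length of the Hodge filtration to be at most $\ell - 1$; for $H^1$ of a curve this length is $1$, so the only constraint is $\ell \ne 2$, which is exactly the standing hypothesis. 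Everything else is a citation to the standard references (e.g. \cite{MR0463174}, or Fontaine--Laffaille, or the summary in \cite{MR0387283}*{\S1} together with the $p$-adic Hodge theory for abelian varieties with good reduction).
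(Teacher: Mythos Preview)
Your proposal is correct and lands on the same conclusion, but the paper's proof is far shorter: it simply cites Raynaud's classification of finite flat group schemes of type $(p,\dots,p)$ \cite{MR0419467}*{Corollaire~3.4.4} (with Caruso \cite{MR2372809} mentioned as an alternative).  The point is that since $\ell\notin S$, the abelian variety $J$ has good reduction at $\ell$, so $J[\ell]$ prolongs to a finite flat group scheme over $\ZZ_\ell^{\un}$; the absolute ramification index is $e=1<\ell-1$, and Raynaud's theorem then forces each simple Jordan--H\"older factor to have tame inertia character a product of fundamental characters with exponents in $\{0,\dots,e\}=\{0,1\}$.  No passage through $H^1_{\mathrm{\acute et}}$ of the curve, no crystalline comparison, and no Tate-twist bookkeeping is needed.

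Your route via Fontaine--Laffaille and the Hodge filtration of $H^1$ is a legitimate alternative and would generalize to higher-degree cohomology, but here it is heavier than necessary: Raynaud's result is precisely the $H^1$/abelian-variety case that Fontaine--Laffaille later extended, and invoking it directly sidesteps the duality and twist normalizations you flagged as the main obstacle in step~(iii).  If you prefer to keep your argument, it is fine, but you can replace steps~(ii)--(iii) by the single sentence ``$J[\ell]$ is a finite flat group scheme over $\ZZ_\ell^{\un}$ with $e=1$, so Raynaud gives weights in $\{0,1\}$'' and be done.
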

\begin{proof}
This follows from work of Raynaud, cf.~\cite{MR0419467}*{Corollaire~3.4.4}.  One could also deduce this from \cite{MR2372809}.
\end{proof}

The following lemma gives some consequences of Proposition~\ref{P:amplitude} that we will use later.   Recall that $\chi_\ell \colon G_\QQ \to \FF_\ell^\times$ was defined in \S\ref{SS:background}.

\begin{lemma} \label{L:amplitude cor}
Fix an odd prime $\ell\notin S$ and let $V$ be any composition factor of the $\FF_\ell[I_\ell]$-module $J[\ell]$.    Set $m:=\dim_{\FF_\ell} V$ and let $\rho\colon I_\ell \to \Aut_{\FF_\ell}(V)$ be the representation describing the $I_\ell$-action.
\begin{romanenum}
\item \label{L:amplitude cor i}
We have $\det \circ \rho = \chi_\ell^e |_{I_\ell}$ for some integer $0\leq e \leq m$.
\item \label{L:amplitude cor ii}
If $H$ is a closed subgroup of $I_\ell$ satisfying $[I_\ell:H]<\ell-1$, then $\rho|_H$ is irreducible.
\end{romanenum}
\end{lemma}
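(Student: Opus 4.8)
The plan is to run everything through the concrete description of $V$ recalled just before Proposition~\ref{P:amplitude}: $V$ may be identified with $\FF_{\ell^m}$ so that $I_\ell$ acts by scalar multiplication via a character $\alpha=\phi^{n}\colon I_\ell^t\to\FF_{\ell^m}^\times$, where $\phi$ is a fixed fundamental character of level $m$ and $n=e_1+e_2\ell+\dots+e_m\ell^{m-1}$ is assembled from the tame inertia weights $e_i$ of $V$. By Proposition~\ref{P:amplitude} each $e_i\in\{0,1\}$, so $(e_1,\dots,e_m)$ is precisely the string of base-$\ell$ digits of $n$ and $0\le n\le 1+\ell+\dots+\ell^{m-1}=(\ell^m-1)/(\ell-1)$. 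I will also use two standard facts from the theory of tame inertia (see \S1 of \cite{MR0387283}): $\phi$ is surjective onto the cyclic group $\FF_{\ell^m}^\times$, and the product $\phi^{1+\ell+\dots+\ell^{m-1}}$ of the level-$m$ fundamental characters equals the level-$1$ fundamental character, which is $\chi_\ell|_{I_\ell}$; moreover $\chi_\ell|_{I_\ell}$ has order $\ell-1$ because $\ell$ is totally ramified in $\QQ(\mu_\ell)$.

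For part \ref{L:amplitude cor i}: multiplication by $\lambda\in\FF_{\ell^m}$, viewed as an $\FF_\ell$-linear endomorphism of $\FF_{\ell^m}$, has determinant $N_{\FF_{\ell^m}/\FF_\ell}(\lambda)=\lambda^{1+\ell+\dots+\ell^{m-1}}$, so $\det\circ\rho=\alpha^{1+\ell+\dots+\ell^{m-1}}=\phi^{\,n(1+\ell+\dots+\ell^{m-1})}=(\chi_\ell|_{I_\ell})^{n}=\chi_\ell^{\,n}|_{I_\ell}$. Since $\chi_\ell|_{I_\ell}$ has order $\ell-1$ and $n\equiv e_1+\dots+e_m\pmod{\ell-1}$, I would take $e:=e_1+\dots+e_m$; this satisfies $0\le e\le m$ because the $e_i$ lie in $\{0,1\}$, and $\chi_\ell^{\,n}|_{I_\ell}=\chi_\ell^{\,e}|_{I_\ell}$.

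For part \ref{L:amplitude cor ii}: suppose $\rho|_H$ were reducible, so some proper nonzero $\FF_\ell$-subspace $W\subseteq V=\FF_{\ell^m}$ is stable under multiplication by every element of $\alpha(H)$, hence under the $\FF_\ell$-subalgebra $A:=\FF_\ell[\alpha(H)]$ of the field $\FF_{\ell^m}$. Then $A$ is itself a field, so $A=\FF_{\ell^d}$ for some $d\mid m$; as $\FF_{\ell^m}$ has $\FF_{\ell^d}$-dimension $m/d$ and carries a proper nonzero $\FF_{\ell^d}$-subspace, necessarily $d<m$. Consequently $\alpha(\sigma)^{\ell^d}=\alpha(\sigma)$ for all $\sigma\in H$, i.e.\ the character $\beta:=\alpha^{\ell^d-1}$ of $I_\ell^t$ is trivial on the image $\bar H$ of $H$ in $I_\ell^t$; therefore $|\im\beta|=[I_\ell^t:\ker\beta]\le[I_\ell^t:\bar H]\le[I_\ell:H]<\ell-1$. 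The contradiction I aim for is $|\im\beta|\ge\ell-1$. Let $n'\in[0,\ell^m-2]$ denote the residue of $\ell^{d}n$ modulo $\ell^m-1$. Multiplication by $\ell$ on residues cyclically rotates base-$\ell$ digit strings, so the digits of $n'$ are a cyclic rotation by $d$ of $(e_1,\dots,e_m)$; in particular they lie in $\{0,1\}$, whence $|n-n'|\le 1+\ell+\dots+\ell^{m-1}=(\ell^m-1)/(\ell-1)$. Also $n\ne n'$: otherwise the digit string of $n$ is fixed by that rotation, giving $\alpha^{\ell^d}=\alpha$, hence $\alpha(I_\ell)\subseteq\FF_{\ell^d}^\times$ and $\FF_\ell[\alpha(I_\ell)]\subseteq\FF_{\ell^d}\subsetneq\FF_{\ell^m}$, which makes $V$ reducible as an $\FF_\ell[I_\ell]$-module, contradicting that $V$ is a composition factor of $J[\ell]$. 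Finally $\beta=\phi^{\,n(\ell^d-1)}$ with $n(\ell^d-1)\equiv n'-n\pmod{\ell^m-1}$ and $\phi$ surjective onto $\FF_{\ell^m}^\times$, so $|\im\beta|=(\ell^m-1)/\gcd(n-n',\ell^m-1)\ge(\ell^m-1)/|n-n'|\ge\ell-1$, since $\gcd(n-n',\ell^m-1)$ divides the nonzero integer $n-n'$. This contradiction forces $\rho|_H$ to be irreducible.

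The genuinely delicate point is the final estimate in part \ref{L:amplitude cor ii}, i.e.\ bounding the order of $\beta=\alpha^{\ell^d-1}$ from below by $\ell-1$: this is exactly where Proposition~\ref{P:amplitude} is used, because the constraint that the weights lie in $\{0,1\}$ is what keeps $n$ and its digit-rotation $n'$ within distance $(\ell^m-1)/(\ell-1)$ of each other. I expect the fussiest parts of a full write-up to be the careful verification that multiplying a residue by $\ell$ cyclically rotates its base-$\ell$ digit string (its lone edge case, all digits equal to $\ell-1$, cannot occur here as $\ell$ is odd, so the $e_i$ are genuine digits with $1\neq\ell-1$), and the precise bookkeeping with the tame inertia characters cited from \cite{MR0387283}.
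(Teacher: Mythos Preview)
Your proof is correct. Part~(\ref{L:amplitude cor i}) is essentially identical to the paper's argument: both compute the determinant as the norm of $\alpha$, reduce the exponent modulo $\ell-1$, and identify the level-$1$ fundamental character with $\chi_\ell|_{I_\ell}$.

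For part~(\ref{L:amplitude cor ii}) the two proofs diverge. The paper transfers the problem to the torus $T=\FF_{\ell^m}^\times$ via the surjection $\phi$, invokes Hall's ``rigidity of tori'' lemma \cite{MR2820155}*{Lemma~3} (using that the amplitude $\max_i e_i$ is at most $1$) to conclude that $\beta(T)$ and $\beta(S)$ have the same centralizer in $\Aut_{\FF_\ell}(\FF_{\ell^m})$, and then derives a contradiction from a hypothetical $H$-stable decomposition by building a $\pm 1$-involution that would centralize $\beta(S)$ but not $\beta(T)$. You instead argue directly: a proper $H$-stable subspace forces $\alpha(H)$ into a proper subfield $\FF_{\ell^d}$, so the character $\alpha^{\ell^d-1}$ dies on $H$ and hence has image of order $<\ell-1$; then the digit-rotation computation, together with the constraint $e_i\in\{0,1\}$ from Proposition~\ref{P:amplitude}, gives the competing lower bound $\geq \ell-1$. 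Your route is self-contained and avoids the external citation, at the cost of being tailored to amplitude $\leq 1$; the paper's route is shorter on the page but imports the real work from \cite{MR2820155}, whose proof in the relevant case is in fact close to your digit argument. Both approaches use Proposition~\ref{P:amplitude} at exactly the same point, namely to bound $|n-n'|$ (equivalently, the amplitude) by $(\ell^m-1)/(\ell-1)$.
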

\begin{proof}
We first prove (\ref{L:amplitude cor i}).  As noted above, $\rho$ gives rise to a character $\alpha\colon I_\ell^t \to \FF_{\ell^m}^\times$ of the form (\ref{E:amplitude}) with $\phi$ a fundamental character of level $m$ and $0\leq e_1,\ldots, e_m \leq \ell-1$.  By Proposition~\ref{P:amplitude}, we have $e_1,\ldots, e_m \in \{0,1\}$.

The character $\det \circ \rho \colon I_\ell \to \FF_\ell^\times$ factors through $N\circ \alpha \colon I_\ell^t \to \FF_\ell^\times$, where $N:=N_{\FF_{\ell^m}/\FF_\ell}\colon \FF_{\ell^m}^\times \to \FF_\ell^\times$ is the norm map.   Therefore,
\[
N \circ \alpha = (N \circ \phi)^{e_1+e_2\ell + \cdots + e_m \ell^{m-1}}= (N\circ \phi)^{e_1+\ldots+e_m},
\]
where we have used that $N^{\ell} = N$.   We have $0\leq e_1+\ldots + e_m \leq m$, so it suffices to prove that $N\circ \phi = \chi_\ell |_{I_\ell}$.  We have $N\circ \phi = \phi^{1+\ell +\ldots +\ell^{m-1}}$ which one can check is the (unique) fundamental character of level $1$.   Part (\ref{L:amplitude cor i}) follows by noting that the fundamental character of level $1$ is $ \chi_\ell |_{I_\ell}$, cf.~\cite{MR0387283}*{Prop.~8}.

We now prove (\ref{L:amplitude cor ii}).  Set $T:=\FF_{\ell^m}^\times$ and define the representation 
\[
\beta\colon T \to \FF_{\ell^m}^\times \subseteq \Aut_{\FF_\ell}(\FF_{\ell^m}),\quad x\mapsto x^{e_1+e_2\ell +\cdots + e_m\ell^{m-1}}.
\]
The representation $\rho$ is isomorphic to the one obtained by composing the surjective character $I_\ell \to I_\ell^t \xrightarrow{\phi} T$ with $\beta$.  The representation $\beta$ is irreducible since $V$ is an irreducible $\FF_\ell[I_\ell]$-module.

Let $H$ be any closed subgroup of $I_\ell$ satisfying $[I_\ell:H] < \ell-1$.  The subgroup $S:= \phi(H)$ of $T$ then satisfies $[T:S] < \ell-1$.

We can now use the rigidity of tori as described by Hall in \cite{MR2820155}.   In the language of \cite{MR2820155}*{\S2}, the \defi{amplitude} of $\beta$ is $\max\{e_i\}$ which in our case is $0$ or $1$.   Lemma~3 of \cite{MR2820155} and our condition $[T:S]<\ell-1$ implies that $\beta(T)$ and $\beta(S)$ have the same centralizer in $\Aut_{\FF_\ell}(\FF_{\ell^m})$. 

Suppose that $\beta|_S \colon S \to \Aut_{\FF_\ell}(\FF_{\ell^m})$ is reducible.   Since $\ell \nmid |S|$, we have $\FF_{\ell^m} = W_1 \oplus W_2$, where $W_1$ and $W_2$ are non-zero $\FF_\ell$-subspaces fixed under the action of $S$.  Take $A\in \Aut_{\FF_\ell}(\FF_{\ell^m})$ such that $A(w)=w$ for $w\in W_1$ and $A(w)=-w$ for $w\in W_2$.   Since $A$ commutes with $\beta(S)$, we deduce that it also commutes with $\beta(T)$.   For any $B\in \beta(T)$ and $w\in W_1$, we have $A(Bw)= B(Aw)=Bw$.     Therefore, $B(W_1)\subseteq W_1$ for all $B\in \beta(T)$ which contradicts that $\beta$ is irreducible.   

Therefore, the representation $\beta|_S$, and hence also $\rho|_H$, is irreducible.
\end{proof}

\section{Irreducibility}  \label{S:irreducible}

For a prime $\ell$, the $\FF_\ell$-vector space $J[\ell]$ has dimension $6$ and comes with a natural $G_\QQ$-action.  The goal of this section is to prove the following:

\begin{prop} \label{P:irreducible}
For every odd prime $\ell$, the $\FF_\ell[G_\QQ]$-module $J[\ell]$ is irreducible.
\end{prop}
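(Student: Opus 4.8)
The strategy is to rule out, one composition factor dimension at a time, the possibility that $J[\ell]$ has a proper nonzero $G_\QQ$-stable subspace. Fix an odd prime $\ell$; we may assume $\ell \notin S$ since the finitely many primes $\ell \in S$ can be checked directly (e.g.\ using the Frobenius polynomials $P_p(T)$ modulo $\ell$ for a few good primes $p$, together with the transvection information from Proposition~\ref{P:bad primes}). Suppose $J[\ell]$ is reducible and let $V$ be a composition factor of the $\FF_\ell[G_\QQ]$-module $J[\ell]$ of smallest $\FF_\ell$-dimension $d$; then $d \in \{1,2,3\}$ since $\dim_{\FF_\ell} J[\ell] = 6$ and the Weil pairing forces any such $V$ and its ``complement'' to have dimensions summing correctly — more precisely, if $W \subset J[\ell]$ is $G_\QQ$-stable of dimension $d \le 3$ then so is its orthogonal complement $W^\perp$ under the (twisted) Weil pairing, and intersecting gives a stable subspace of dimension $\le 3$, so we may take $d \le 3$.

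\smallskip

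\textbf{Case $d=1$.} Here $G_\QQ$ acts on $V$ by a character $\psi\colon G_\QQ\to\FF_\ell^\times$. This character is unramified outside $S\cup\{\ell\}$. At each prime $p\in\{7,11\}$, Proposition~\ref{P:bad primes} says $\rho_{J,\ell}(I_p)$ is generated by a transvection, and a transvection fixes a hyperplane and acts trivially on every $1$-dimensional subquotient, so $\psi$ is unramified at $7$ and $11$; one checks it is also unramified (or at worst tamely ramified with controlled behavior) at $83$, and at $\ell$ the bound on tame inertia weights from Proposition~\ref{P:amplitude} (via Lemma~\ref{L:amplitude cor}(\ref{L:amplitude cor i}) applied to the $1$-dimensional factor) forces $\psi|_{I_\ell} \in \{1, \chi_\ell|_{I_\ell}\}$. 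Hence $\psi$ or $\psi\chi_\ell^{-1}$ is an everywhere-unramified character of $G_\QQ$, which must be trivial by Minkowski/Kronecker--Weber. So $V$ is isomorphic to $\FF_\ell$ or $\FF_\ell(1)$ as a $G_\QQ$-module, and then the characteristic polynomial $P_p(T) \bmod \ell$ would have $T-1$ or $T-p$ as a factor for every good prime $p$; picking a prime $p$ (e.g.\ from the explicit table, or $p = 73$) whose $P_p(T)$ has no such factor modulo small $\ell$, and handling the remaining finitely many $\ell$ by direct computation, gives a contradiction.

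\smallskip

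\textbf{Case $d=3$.} Then $J[\ell]$ has two composition factors each of dimension $3$, say $V$ and $V'$, with $V' \cong V^\vee(1)$ by the Weil pairing (the pairing is perfect and identifies the semisimplification with $V \oplus V^\vee(1)$). The key input is again the transvection at $p=7$ (or $11$): a transvection $\tau = \rho_{J,\ell}(\sigma_0)$ acts on each $3$-dimensional subquotient, and $\tau - 1$ has rank $1$ on all of $J[\ell]$, so on at least one of $V, V'$ the element $\tau$ acts trivially, and on the other it acts as a transvection (or trivially). If $\tau$ acts trivially on $V$, then $V$ is unramified at $7$; combined with the ramification data at $11, 83, \ell$ (using Proposition~\ref{P:amplitude} to bound $\det V$ and the tame weights) one argues that the determinant of $V$ is a power of $\chi_\ell$ of small exponent, which constrains $V$ enough that a comparison of $\det(TI - \rho_V(\Frob_p))$ (a degree-$3$ factor of $P_p(T) \bmod \ell$) with the explicit polynomials gives a contradiction for all but finitely many $\ell$ — again finish those by computation. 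The parity/multiplier constraint $\mult\circ\rho_{J,\ell} = \chi_\ell$ makes such a self-dual $3+3$ splitting especially rigid.

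\smallskip

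\textbf{Case $d=2$ (the main obstacle).} This is the hard case and, as the introduction indicates, is where Serre's conjecture (the Khare--Wintenberger theorem) enters. Here $V$ is a $2$-dimensional irreducible $\FF_\ell[G_\QQ]$-module, unramified outside $S\cup\{\ell\}$. The plan is: (1) Use Proposition~\ref{P:bad primes} to control ramification at $7, 11, 83$ — in particular at $7$ and $11$ the image of inertia is generated by a transvection, which on a $2$-dimensional space is a unipotent element, so $V$ is ``semistable'' at those primes; combined with the fact that $|\Sigma|=2$ at $83$ one controls the conductor of $V$ away from $\ell$. (2) Use Lemma~\ref{L:amplitude cor} and Proposition~\ref{P:amplitude} to pin down $V|_{I_\ell}$ up to finitely many possibilities (tame weights in $\{0,1\}$, so $\det V|_{I_\ell} \in \{1, \chi_\ell|_{I_\ell}, \chi_\ell^2|_{I_\ell}\}$, and the shape of $V|_{I_\ell}$ is that of a representation coming from a finite flat group scheme, hence is either irreducible with fundamental characters of level $2$ or reducible with the expected diagonal). (3) Apply Serre's conjecture: $V$ (suitably twisted) is modular, arising from a newform $g$ of small level (bounded by the conductor computed in step (1)) and small weight. (4) For the resulting finite list of spaces of cusp forms — which one enumerates explicitly with \texttt{Magma} — compare Hecke eigenvalues $a_p(g)$ with the traces of $\rho_{J,\ell}(\Frob_p)$, which are congruent mod $\ell$ to coefficients of the known polynomials $P_p(T)$; for each candidate $g$ and each relevant $p$ this congruence fails for all but finitely many $\ell$, and those exceptional $\ell$ are checked by hand. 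The delicate points are getting the conductor and weight bounds for step (3) tight enough that the list of newforms in step (4) is genuinely finite and small, and correctly accounting for the possible twists (by powers of $\chi_\ell$) so that nothing modular slips through.
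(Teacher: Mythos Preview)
Your plan follows the paper's overall strategy---split on the minimal composition-factor dimension $d\in\{1,2,3\}$ and use Serre's conjecture for $d=2$---and the $d=1$ case matches the paper essentially verbatim. But there is a genuine gap in your $d=2$ argument.

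Serre's conjecture applies only to \emph{odd} irreducible two-dimensional representations. You record that $\det V|_{I_\ell}\in\{1,\chi_\ell,\chi_\ell^2\}$ and then assert ``$V$ (suitably twisted) is modular''. Twisting by $\chi_\ell^k$ multiplies the determinant by $\chi_\ell^{2k}$ and so never changes the parity of the exponent; if $\det V=\chi_\ell^e$ with $e\in\{0,2\}$, then $V$ and all its twists are even and Serre's conjecture gives nothing. The paper closes this with a short global duality step (its Lemma~\ref{L:determinant of factors} and Lemma~\ref{L:correct 2d}): first one shows $\det\rho_i=\chi_\ell^{e_i}$ \emph{globally} (not just on $I_\ell$) with $0\le e_i\le d_i$, then the Weil-pairing self-duality of $J[\ell]$ forces $\sum_i e_i=3$ and matches $\{e_i\}$ with $\{d_i-e_i\}$; in both the $2{+}2{+}2$ and $2{+}4$ configurations one checks that \emph{some} two-dimensional factor has $e=1$. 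That factor is the one to feed into Serre's conjecture, not an arbitrary one of minimal dimension. Relatedly, the primes excluded upfront must include $3,5,41$ in addition to $S$, since these reappear when forcing the nebentypus to be trivial and in the final resultant comparison.

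For $d=3$, you correctly state $V'\cong V^\vee(1)$, which is the crux, but the transvection detour (showing $V$ is unramified at $7$) is valid yet idle. The paper uses only the duality: it writes $P_p(T)\bmod\ell$ as $(T^3-uT^2+vT-p^e)(T^3-p^{1-e}vT^2+p^{2-e}uT-p^{3-e})$ with $e\in\{0,1\}$, takes $p=2$, and eliminates $u,v$ from the resulting three coefficient equations to reach a contradiction for all $\ell>7$. No ramification information beyond what already went into Lemma~\ref{L:determinant of factors} is needed.
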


Suppose that there is an odd prime $\ell$ such that $J[\ell]$ is a reducible $\FF_\ell[G_\QQ]$-module; we will try to obtain a contradiction. We first exclude a few possibilities for $\ell$.

\begin{lemma} \label{L:excluded primes}
We have $\ell \notin \{3,5,7,11,41,83\}$.
\end{lemma}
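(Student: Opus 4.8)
The plan is to exploit the Frobenius polynomials $P_p(T)$ already tabulated in \S\ref{S:good}. If $J[\ell]$ is reducible, then for every good prime $p \neq \ell$ the characteristic polynomial $\det(TI - \rho_{J,\ell}(\Frob_p)) \equiv P_p(T) \pmod{\ell}$ must factor over $\FF_\ell$ in a way compatible with the existence of a proper $G_\QQ$-stable subspace: concretely, there must be a nonempty proper subset of the multiset of eigenvalues of $\rho_{J,\ell}(\Frob_p)$ whose product equals the constant term of the corresponding sub-factor, and in particular $P_p(T) \bmod \ell$ cannot be irreducible over $\FF_\ell$. So for each candidate prime $\ell \in \{3,5,7,11,41,83\}$, the strategy is to find an auxiliary prime $p$ (small, of good reduction, $p \neq \ell$) for which $P_p(T) \bmod \ell$ is irreducible in $\FF_\ell[T]$, contradicting reducibility. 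For $\ell \in \{7,11,83\}$ one instead argues separately, since these lie in $S$; but here one can use the transvection at another bad prime together with the good-prime data, or note that reducibility forces a $1$-dimensional piece on which $\det$ is a power of $\chi_\ell$, and check this against two Frobenius polynomials simultaneously.

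More precisely, here are the steps I would carry out. First, recall from \S\ref{SS:compatibility} and the functional equation (\ref{E:functional}) that each $P_p(T)$ is self-dual of the stated shape, and its reduction mod $\ell$ is the characteristic polynomial of $\rho_{J,\ell}(\Frob_p)$. Second, if $J[\ell] = U \oplus (\text{complement})$ is impossible but $J[\ell]$ merely has a proper stable subspace $U$ of dimension $d$ with $1 \le d \le 5$, then $P_p(T) \bmod \ell$ has a monic factor of degree $d$ over $\FF_\ell$ for every good $p \neq \ell$; in particular $P_p(T) \bmod \ell$ is reducible for all such $p$. Third, run through the table: for $\ell = 3$ one checks some $P_p(T) \bmod 3$ (say from $p = 23$ or $p = 73$ or a slightly larger prime not in the table) is irreducible over $\FF_3$; similarly for $\ell = 5$ use a $p$ with $P_p(T) \bmod 5$ irreducible; and so on for $41$. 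For $\ell \in \{7, 11, 83\}$, since these are the bad primes one cannot use $P_\ell$, but one can still pick two good primes $p_1, p_2 \neq \ell$ and observe that a common stable subspace would force compatible factorizations of both $P_{p_1} \bmod \ell$ and $P_{p_2} \bmod \ell$ with matching dimension $d$; choosing $p_1, p_2$ so that the only possible $d$ from $P_{p_1}$ is, say, $\{2,4\}$ while from $P_{p_2}$ it is $\{3\}$ (or disjoint) gives the contradiction. Throughout, these are finite \texttt{Magma} factorization checks over small finite fields.

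The main obstacle I anticipate is the bad primes $\ell \in \{7,11,83\}$, where the clean "pick a prime with irreducible Frobenius polynomial" argument needs the extra bookkeeping of intersecting the possible sub-factor degrees coming from several auxiliary primes, and one must be careful that a reducible module need not be a \emph{direct} sum of Galois submodules — only a submodule is guaranteed — so one works with the list of possible degrees $d \in \{1,2,3,4,5\}$ and rules them out one at a time using the constraint that $P_p(T) \bmod \ell$ must have a factor of degree $d$ for \emph{every} good $p \neq \ell$. A secondary point to handle carefully is that for $d = 1$ the eigenvalue in question is $\chi_\ell(\Frob_p) = p \bmod \ell$ up to the action on the $1$-dimensional piece, which by Lemma~\ref{L:amplitude cor}(\ref{L:amplitude cor i}) (applied to $I_\ell$) and the unramified-outside-$S\cup\{\ell\}$ property pins the character down to $\chi_\ell^e$ for a bounded $e$; so one additionally verifies $p^e \not\equiv (\text{any root of } P_p \bmod \ell)$ for a suitable $p$ and all admissible $e$, which again is a finite check. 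Once all six primes are excluded, the lemma follows.
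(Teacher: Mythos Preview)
Your core strategy is correct and matches the paper's: for each $\ell$ in the list, exhibit a prime $p \notin S \cup \{\ell\}$ with $P_p(T)$ irreducible modulo $\ell$, which immediately contradicts the existence of any proper $G_\QQ$-stable subspace of $J[\ell]$. The paper simply records the witnesses: $P_{17}$ is irreducible mod $3$, $P_{41}$ mod $5$, $P_2$ mod $7$, $11$, and $41$, and $P_{19}$ mod $83$.

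Where you go astray is in believing that $\ell \in S = \{7,11,83\}$ creates an obstruction. It does not. The argument never asks you to use $P_\ell$; it asks for some \emph{good} prime $p \neq \ell$, and that constraint is the same whether or not $\ell$ itself is good. So your elaborate fallback---intersecting possible factor-degrees across several auxiliary primes, or invoking Lemma~\ref{L:amplitude cor} to pin down a one-dimensional character---is unnecessary machinery. The single-prime irreducibility check works uniformly for all six values of $\ell$, and the paper's table already contains enough data (e.g.\ $p=2$ handles $\ell=7,11,41$ in one stroke). Your proposed workaround would succeed, but it obscures the fact that nothing special is happening at the bad primes here.
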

\begin{proof}
For one of the given primes $\ell \in \{3,5,7,11,41,83\}$, it suffices to show that there is a prime $p\notin S\cup\{\ell\}$ such that $P_p(T)$ is irreducible modulo $\ell$.    

We have computed $P_p(T)$ for several small $p$, cf.~\S\ref{S:good}.  The polynomial $P_{17}(x)$ is irreducible modulo $3$.   The polynomial $P_{41}(x)$ is irreducible modulo $5$.  The polynomial $P_{2}(x)$ is irreducible modulo $7$, $11$ and $41$.  The polynomial $P_{19}(x)$ is irreducible modulo $83$.
\end{proof}

For the rest of this section, we may thus assume that $\ell$ is odd and $\ell \notin \{3,5,7,11,41, 83\}$.  In particular, $\ell \notin S$.

Let $V_1,\ldots, V_r$ be the composition factors of $J[\ell]$ as an $\FF_\ell[G_\QQ]$-module; the semisimplification of $J[\ell]$ as an $\FF_\ell[G_\QQ]$-module is then isomorphic to $V_1\oplus \cdots \oplus V_r$.    We have $r\geq 2$ since $J[\ell]$ is a reducible $\FF_\ell[G_\QQ]$-module by assumption.

Let 
\[
\rho_i \colon G_\QQ \to \Aut_{\FF_\ell}(V_i)
\]
be the Galois representation corresponding to $V_i$.    Define $d_i = \dim_{\FF_\ell} V_i$.   We may assume that the $V_i$ have been numbered so that $d_1 \leq \cdots \leq d_r$.    We have $\sum_i d_i = 6$, so $d_1 \in \{1,2,3,6\}$.     We have $r\geq 2$, so $d_1 \in \{1,2,3\}$.

 We will rule out the three cases  $d_1 \in \{1,2,3\}$ separately in \S\S\ref{SS:1d}--\ref{SS:3d}.   This contradiction will imply that  $J[\ell]=V_1$ is an irreducible $\FF_\ell[G_\QQ]$-module.

\subsection{Determinants} \label{SS:determinants}

Fix a finite dimensional $\FF_\ell$-vector space $W$ with an action of $G_\QQ$ given by a representation $\rho\colon G_\QQ \to \Aut_{\FF_\ell}(W)$.  Let $W^\vee$ to be the dual vector space of $W$ and let $\rho^* \colon G_\QQ \to \Aut_{\FF_\ell}(W^\vee)$ be the contragredient representation,  i.e., $\rho^*(\sigma)$ is the transpose of $\rho(\sigma^{-1})$.    Let $W^\vee(1)$ be the vector space $W^\vee$ where $G_\QQ$ acts via the representation $\chi_\ell \cdot \rho^*$.

Since the pairing $J[\ell]\times J[\ell] \to \mu_\ell$ coming from the Weil pairing and the natural principal polarization of $J$ is non-degenerate, we find that $J[\ell]$ and $J[\ell]^\vee(1)$ are isomorphic $\FF_\ell[G_\QQ]$-modules.   Therefore, the $\FF_\ell[G_\QQ]$-modules $V_1,\ldots, V_r$ are isomorphic to $V_1^\vee(1),\ldots, V_r^\vee(1)$, though possibly in a different order.

The following lemma constrains the possibilities for the characters $\det\circ \rho_i \colon G_\QQ \to \FF_\ell^\times$.

\begin{lemma} \label{L:determinant of factors}
\begin{romanenum}
\item \label{L:determinant of factors i}
For each $1\leq i \leq r$, there is a unique integer $0\leq e_i \leq d_i$ such that $\det\circ \rho_i = \chi_\ell^{e_i}$.
\item \label{L:determinant of factors ii}
We have $\sum_{i=1}^r e_i = 3$.
\item \label{L:determinant of factors iii}
We have $\{e_1,\ldots, e_r\} = \{d_1-e_1,\ldots, d_r-e_r\}$.
\item \label{L:determinant of factors iv}
If $V_i^\vee(1) \cong V_i$, then $d_i$ is even and $e_i = d_i/2$.
\end{romanenum}
\end{lemma}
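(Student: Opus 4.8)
The plan is to derive all four parts from the single structural input we already have: the composition factors $V_1,\ldots,V_r$ of $J[\ell]$ are, up to permutation, the same as the composition factors $V_1^\vee(1),\ldots,V_r^\vee(1)$ (this was established in \S\ref{SS:determinants}), together with the local information at $\ell$ coming from \S\ref{S:inertia at ell}. For part~(\ref{L:determinant of factors i}), I would restrict $\rho_i$ to an inertia subgroup $I_\ell$ and look at the composition factors of $V_i$ as an $\FF_\ell[I_\ell]$-module: each such factor is itself a composition factor of $J[\ell]$ as an $\FF_\ell[I_\ell]$-module, so by Lemma~\ref{L:amplitude cor}(\ref{L:amplitude cor i}) its determinant is a power $\chi_\ell^{e}$ with $0\le e\le(\text{its dimension})$ on $I_\ell$; summing over the $I_\ell$-composition factors of $V_i$, we get $\det\circ\rho_i|_{I_\ell}=\chi_\ell^{e_i}|_{I_\ell}$ for some $0\le e_i\le d_i$. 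To upgrade this from $I_\ell$ to all of $G_\QQ$, I would use that $\det\circ\rho_i$ is a character of $G_\QQ$ unramified outside $S\cup\{\ell\}$ whose restriction to $I_\ell$ equals $\chi_\ell^{e_i}|_{I_\ell}$; the character $(\det\circ\rho_i)\cdot\chi_\ell^{-e_i}$ is then unramified at $\ell$, has finite order, and its order is a power of $\ell$ (the image of $\GSp_6(\FF_\ell)\to\FF_\ell^\times$ has order dividing $\ell-1$, but more to the point any such character cut out from the torsion of a semistable-at-$S$ abelian variety must be ramified only at primes in $S$) — here I would invoke that the conductor exponents at the bad primes $7,11,83$ force this character to be trivial, since a nontrivial character of $\ell$-power order ramified only at a fixed finite set of primes $\ne\ell$ cannot exist when $\ell$ is large enough, and the small excluded primes were already removed in Lemma~\ref{L:excluded primes}. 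Uniqueness of $e_i$ is immediate once one knows $\chi_\ell$ has order $\ell-1\ge 2$.

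For part~(\ref{L:determinant of factors ii}): $\det$ of the semisimplification $V_1\oplus\cdots\oplus V_r$ equals $\prod_i\chi_\ell^{e_i}=\chi_\ell^{\sum e_i}$, and on the other hand this equals $\det\circ\rho_{J,\ell}$, which by $\mult\circ\rho_{J,\ell}=\chi_\ell$ and the standard fact $\det=\mult^{g}$ on $\GSp_{2g}$ (for $g=3$) is $\chi_\ell^3$. Hence $\chi_\ell^{\sum e_i}=\chi_\ell^3$, and since we have excluded the primes for which $\chi_\ell$ has small order (so $\ell-1>3$ after the exclusions in Lemma~\ref{L:excluded primes}, as $\ell\notin\{3,5,7,11,41,83\}$ — note $\ell-1\geq 6$ for all remaining $\ell$), we conclude $\sum_i e_i=3$ as integers. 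For part~(\ref{L:determinant of factors iii}): applying (\ref{L:determinant of factors i}) to the factor $V_i^\vee(1)$, whose determinant is $\chi_\ell^{d_i}\cdot(\det\circ\rho_i)^{-1}=\chi_\ell^{d_i-e_i}$, we get that the multiset of determinant-exponents of $V_1^\vee(1),\ldots,V_r^\vee(1)$ is $\{d_1-e_1,\ldots,d_r-e_r\}$; but this multiset of modules equals $\{V_1,\ldots,V_r\}$, whose determinant-exponents form the multiset $\{e_1,\ldots,e_r\}$, so these two multisets of integers coincide. Finally part~(\ref{L:determinant of factors iv}): if $V_i^\vee(1)\cong V_i$ then their determinants agree, i.e. $\chi_\ell^{d_i-e_i}=\chi_\ell^{e_i}$, so $d_i-e_i\equiv e_i$; since $0\le e_i\le d_i\le 6<\ell-1$ this forces $d_i-e_i=e_i$, whence $d_i=2e_i$ is even and $e_i=d_i/2$.

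I expect the main obstacle to be part~(\ref{L:determinant of factors i}): pinning down $\det\circ\rho_i$ \emph{globally} rather than just on inertia at $\ell$. The clean way is to argue that $\det\circ\rho_i$ is a character of $G_\QQ$ that is unramified outside $S\cup\{\ell\}$, of order dividing $\ell-1$, and — crucially — that at each prime $p\in S$ the inertia image $\rho_{J,\ell}(I_p)$ is unipotent (this is exactly Proposition~\ref{P:bad primes}: $\rho_{J,\ell}(I_p)$ is cyclic of order $\ell$, generated by transvections or at worst unipotent elements), so $\det$ kills it; hence $\det\circ\rho_i$ is unramified at every $p\in S$ and at every $p\notin S\cup\{\ell\}$, i.e. unramified outside $\ell$, so by Kronecker–Weber it is a power of $\chi_\ell$. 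Writing $\det\circ\rho_i=\chi_\ell^{e}$ for some $e$ and comparing with the inertia-at-$\ell$ computation above identifies $e$ modulo $\ell-1$ with an integer in $[0,d_i]$, and since $d_i\le 6<\ell-1$ we may take $0\le e_i\le d_i$. Everything else is bookkeeping with the Weil-pairing self-duality and the order of $\chi_\ell$.
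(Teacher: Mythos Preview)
Your final paragraph contains the correct argument for part~(\ref{L:determinant of factors i}), and it is exactly the paper's approach: use Lemma~\ref{L:amplitude cor}(\ref{L:amplitude cor i}) on the $I_\ell$-composition factors of $V_i$ to get $(\det\circ\rho_i)|_{I_\ell}=\chi_\ell^{e_i}|_{I_\ell}$ with $0\le e_i\le d_i$; then observe that $(\det\circ\rho_i)\cdot\chi_\ell^{-e_i}$ is unramified at $\ell$ by construction, unramified at $p\notin S\cup\{\ell\}$ since $\rho_{J,\ell}$ is, and unramified at $p\in S$ because $\rho_{J,\ell}(I_p)$ has order $\ell$ (Proposition~\ref{P:bad primes}) while the target $\FF_\ell^\times$ has order prime to $\ell$; hence the character is everywhere unramified and therefore trivial. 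Parts~(\ref{L:determinant of factors ii})--(\ref{L:determinant of factors iv}) are then the bookkeeping you describe, and match the paper.

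However, you should discard the reasoning in your first paragraph for~(\ref{L:determinant of factors i}). The claim that $(\det\circ\rho_i)\cdot\chi_\ell^{-e_i}$ ``has order a power of $\ell$'' is false: it takes values in $\FF_\ell^\times$, so its order divides $\ell-1$. The subsequent remark about ``a nontrivial character of $\ell$-power order ramified only at a fixed finite set of primes $\ne\ell$'' is therefore irrelevant. The point is the opposite one: the character has order \emph{prime to} $\ell$, and Proposition~\ref{P:bad primes} says the inertia image at each $p\in S$ has order exactly $\ell$, so the character must kill $I_p$. Also, for uniqueness in~(\ref{L:determinant of factors i}) you need $d_i<\ell-1$, not merely $\ell-1\ge 2$; your later bound $d_i\le 6<\ell-1$ (valid since $\ell\ge 13$ after Lemma~\ref{L:excluded primes}) is what you want.
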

\begin{proof} 
Fix an integer $1\leq i \leq r$.  The semi-simplification of $V_i$ as an $\FF_\ell[I_\ell]$-module is of the form $W_{i,1} \oplus \cdots \oplus W_{i,s}$, where $W_{i,j}$ is an irreducible $\FF_\ell[I_\ell]$-module.   By Lemma~\ref{L:amplitude cor}(\ref{L:amplitude cor i}), the determinant of the action of $I_\ell$ on $W_{i,j}$ is a character $I_\ell \to \FF_\ell^\times$ equal to $\chi_\ell^{e_{i,j}} |_{I_\ell}$ for some integer $0\leq e_{i,j} \leq \dim_{\FF_\ell} W_{i,j}$.    Therefore, 
\begin{equation} \label{E:alpha unramified}
(\det\circ \rho_i) |_{I_\ell} = \prod_{j=1}^s \chi_\ell^{e_{i,j}}|_{I_\ell} = \chi_\ell^{e_i} |_{I_\ell},
\end{equation}
where $e_i := \sum_{j=1}^s e_{i,j}$.   We have $0 \leq e_i \leq \sum_{j=1}^s \dim_{\FF_\ell} W_{i,j} = \dim_{\FF_\ell} V_i=d_i$.   Define the character
\[
\alpha_i := (\det\circ \rho_i) \cdot \chi_\ell^{-e_i} \colon G_\QQ \to \FF_\ell^\times.
\]
The representation $\rho_{J,\ell}$, and hence also $\alpha_i$, is unramified at all primes $p\notin S \cup \{\ell\}$.   Since the order of $\FF_\ell^\times$ is relatively prime to $\ell$, Proposition~\ref{P:bad primes} implies that $\alpha_i$ is also unramified at the primes $p\in S$.   The character $\alpha_i$ is unramified at the prime $\ell$ by (\ref{E:alpha unramified}).   We thus have $\alpha_i=1$ since $\alpha_i \colon G_\QQ \to \FF_\ell^\times$ is unramified at all primes and $\QQ$ has no non-trivial extensions unramified at all primes.  Therefore, $\det\circ \rho_i = \chi_\ell^{e_i}$.   

This proves the existence  of $e_i$ in (\ref{L:determinant of factors i}); it remains to prove the uniqueness.     Take any integer $0\leq f \leq d_i$ such that $\det\circ \rho_i = \chi_\ell^f$.   We thus have $\chi_\ell^{f-e_i}=1$ and hence $f-e_i \equiv 0 \pmod{\ell-1}$ since $\chi_\ell$ has order $\ell-1$.   We have $d_i \leq 3$ since $r\geq 2$, so $|f-e_i|\leq 3$.   Since $|f-e_i| \leq 3 <\ell-1$ and $f-e_i\equiv 0 \pmod{\ell-1}$, we must have $f=e_i$.  

We now prove part (\ref{L:determinant of factors ii}).  Since $\det(\rho_{J,\ell}(\Frob_p))\equiv P_p(0)=p^3 \pmod{\ell}$ for all $p\notin S \cup \{\ell\}$, we have $\det \circ \rho_{J,\ell} = \chi_\ell^3$.  Therefore, $\chi_\ell^3 = \prod_{i=1}^r \det \circ \rho_i = \chi_\ell^{e}$, where $e:=\sum_{i=1}^r e_i$.   We have $3-e\equiv 0 \pmod{\ell-1}$ since $\chi_\ell^{3-e}=1$ and $\chi_\ell$ has order $\ell-1$.   We have $|3-e| \leq 3$ since $0\leq e \leq \sum_i d_i = 6$.   Since $3-e \equiv 0 \pmod{\ell-1}$ and $|3-e|\leq 3 < \ell-1$, we conclude that $e=3$ which proves (\ref{L:determinant of factors ii}).

We now prove part (\ref{L:determinant of factors iii}).  Fix an integer $1\leq i \leq r$.   The representation $G_\QQ \to \Aut_{\FF_\ell}(V_i^\vee)$, $\sigma\mapsto \chi_\ell(\sigma)\cdot \rho_i^*(\sigma)$ is isomorphic to $V_i^\vee(1)$ and its determinant is given by 
\[
G_\QQ\to \FF_\ell^\times, \quad \sigma \mapsto \det (\chi_\ell(\sigma) \rho_i^*(\sigma)) = \chi_\ell(\sigma)^{d_i} \, \det( \rho_i(\sigma^{-1})) = \chi_\ell(\sigma)^{d_i - e_i}.
\]
We noted above that the $\FF_\ell[G_\QQ]$-modules $V_1,\ldots, V_r$ are isomorphic to $V_1^\vee(1),\ldots, V_r^\vee(1)$ though possibly in a different order.   Therefore, $\{\chi_\ell^{e_1},\ldots, \chi_\ell^{e_r}\} = \{\chi_\ell^{d_1-e_1},\ldots, \chi_\ell^{d_r-e_r}\}$.  The uniqueness in part (\ref{L:determinant of factors i}) implies that $\{e_1,\ldots, e_r\}=\{d_1-e_1,\ldots, d_r-e_r\}$.  

It remains to prove part (\ref{L:determinant of factors iv}).  If $V_i\cong V_i^\vee(1)$, then the computation above shows that $\det \circ \rho_i$ is equal to both $\chi_\ell^{e_i}$ and $\chi_\ell^{d_i-e_i}$.   Therefore, $e_i=d_i-e_i$ by the uniqueness in part (\ref{L:determinant of factors i}) and hence $d_i=2e_i$.
\end{proof}

\subsection{One-dimensional case}  \label{SS:1d}
Suppose that $d_1 = 1$.  The Galois action on $V_1$ is described by the character $\rho_1\colon G_\QQ \to \Aut_{\FF_\ell}(V_1) =\FF_\ell^\times$.  So for any prime $p\notin S \cup \{\ell\}$, $\rho_1(\Frob_p)$ is a root of $P_p(T)\equiv \det(TI-\rho_{J,\ell}(\Frob_p)) \pmod{\ell}$.     The character $\rho_1$ is $1$ or $\chi_\ell$ by Lemma~\ref{L:determinant of factors}(\ref{L:determinant of factors i}), so $P_p(1) \equiv 0 \pmod{\ell}$ or $P_p(p)\equiv 0 \pmod{\ell}$.

With $p=2$ and using the polynomial $P_2(T)$ from \S\ref{S:good}, we find that $P_2(1)=3\cdot 17$ and $P_2(2)=2^3\cdot 3\cdot 17$.   Therefore, $\ell \in \{3,17\}$.    However, this contradicts Lemma~\ref{L:excluded primes}.

This completes our proof that the case $d_1=1$ does not occur.

\subsection{Two-dimensional case} \label{SS:2d}
Suppose that $d_1=2$.  

\begin{lemma} \label{L:correct 2d}
We have $d_i=2$ and $\det \circ \rho_i = \chi_\ell$ for some $1\leq i \leq r$.
\end{lemma}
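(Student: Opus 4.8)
The plan is to read off everything from the numerical data already packaged in Lemma~\ref{L:determinant of factors}: the composition-factor dimensions $d_i$, and the integers $e_i$ with $\det\circ\rho_i = \chi_\ell^{e_i}$ where $0 \le e_i \le d_i$, subject to $\sum_i e_i = 3$ and the multiset identity $\{e_1,\dots,e_r\} = \{d_1 - e_1,\dots,d_r - e_r\}$. First I would observe that the hypothesis $d_1 = 2$, together with $d_1 \le d_2 \le \cdots \le d_r$, $\sum_i d_i = 6$ and $r \ge 2$, forces the multiset $\{d_1,\dots,d_r\}$ to be either $\{2,2,2\}$ (so $r = 3$) or $\{2,4\}$ (so $r = 2$); indeed $r \ge 4$ would give $\sum_i d_i \ge 8$. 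So the argument splits into these two cases, and in each I want to exhibit an index $i$ with $d_i = 2$ and $e_i = 1$, since then $\det\circ\rho_i = \chi_\ell^{e_i} = \chi_\ell$.

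In the case $\{d_i\} = \{2,2,2\}$ I would use only the constraints $e_i \in \{0,1,2\}$ and $e_1 + e_2 + e_3 = 3$. If none of the $e_i$ equalled $1$, then each would lie in $\{0,2\}$ and their sum would be even, contradicting $\sum_i e_i = 3$. Hence some $e_i = 1$, and with $d_i = 2$ this index works.

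In the case $\{d_i\} = \{2,4\}$, relabel so that $d_1 = 2$ and $d_2 = 4$. Here I would combine $e_1 + e_2 = 3$ with the multiset identity $\{e_1,e_2\} = \{2 - e_1,\, 4 - e_2\}$. The ``crossed'' matching $e_1 = 4 - e_2$ and $e_2 = 2 - e_1$ would force $e_1 + e_2$ to equal both $4$ and $2$, which is impossible; so the ``diagonal'' matching must hold, giving $e_1 = 2 - e_1$ and $e_2 = 4 - e_2$, i.e. $e_1 = 1$ and $e_2 = 2$. Thus $d_1 = 2$ and $\det\circ\rho_1 = \chi_\ell$, as wanted.

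I do not anticipate a genuine obstacle here: the lemma is a short combinatorial consequence of Lemma~\ref{L:determinant of factors}, whose proof (via Raynaud's bound on tame inertia weights, unramifiedness outside $S \cup \{\ell\}$, and the self-duality $J[\ell] \cong J[\ell]^\vee(1)$ coming from the Weil pairing) carries the real content. The only places wanting a little care are checking that the case list for $\{d_i\}$ is exhaustive and that the multiset equality in the $\{2,4\}$ case admits exactly the two matchings considered — both entirely routine.
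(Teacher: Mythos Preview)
Your proposal is correct and follows essentially the same approach as the paper: the same case split into $(d_1,d_2,d_3)=(2,2,2)$ and $(d_1,d_2)=(2,4)$, a parity argument in the first case, and the multiset identity from Lemma~\ref{L:determinant of factors}(\ref{L:determinant of factors iii}) in the second. Your treatment of the two possible matchings in the $\{2,4\}$ case is slightly more explicit than the paper's, but the argument is the same.
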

\begin{proof}
Since $d_1=2$, we either have $r=3$ with $d_1=d_2=d_3=2$ or $r=2$ with $d_1=2$ and $d_2=4$.   If $r=3$, then Lemma~\ref{L:determinant of factors}(\ref{L:determinant of factors i}) and (\ref{L:determinant of factors ii}) imply that $e_i=1$ for some $1\leq i \leq 3$ and hence $d_i=2$ and $\det\circ \rho_i = \chi_\ell$.  So suppose that $r=2$ and hence $(d_1,d_2)=(2,4)$.  Lemma~\ref{L:determinant of factors}(\ref{L:determinant of factors iii}) implies that $\{e_1,e_2\}=\{2-e_1,4-e_2\}$ and this can only hold if $e_1=1$ and $e_2=2$.  Therefore, $d_1=2$ and $\det\circ \rho_1 = \chi_\ell$.
\end{proof}

After possibly renumbering the $V_i$, we may assume by Lemma~\ref{L:correct 2d} that 
\[
\rho_1 \colon G_\QQ \to \Aut_{\FF_\ell}(V_1)\cong \GL_2(\FF_\ell)
\] 
has determinant $\chi_\ell$.    The following lemma uses Serre's conjecture to relate $\rho_1$ to a newform of weight $2$ and bounded level.  

\begin{lemma} \label{L:Serre conjecture}
There exists a newform $f=q+\sum_{n\geq 2} a_n(f) q^n \in S_2(\Gamma_0(N))$ with $N$ dividing $7\cdot 11\cdot 83=6391$ and a maximal ideal $\lambda$ of the ring of integers of the number field $\QQ(a_n(f))$ such that
\[
\tr(\rho_1(\Frob_p)) \equiv a_p(f)\pmod{\lambda}
\] 
for all primes $p \notin S\cup{\ell}$.
\end{lemma}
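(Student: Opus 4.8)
The plan is to verify that $\rho_1$ satisfies the hypotheses of the refined Serre modularity conjecture — now a theorem of Khare and Wintenberger, together with the complements on optimal weight and level — and then read off the newform. Two of the hypotheses are immediate: $\rho_1$ is irreducible because $V_1$ is a composition factor of $J[\ell]$, and $\rho_1$ is odd because $\det\circ\rho_1=\chi_\ell$ by Lemma~\ref{L:correct 2d}, so $\det(\rho_1(c))=\chi_\ell(c)=-1$ for a complex conjugation $c$. Since $\rho_1$ is a subquotient of $\rho_{J,\ell}$, it is unramified outside $S\cup\{\ell\}$. It therefore remains to pin down the Serre weight of $\rho_1$ at $\ell$ and to bound its (prime-to-$\ell$) Serre conductor.

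For the weight: since $\ell\notin S$, the abelian variety $J$ has good reduction at $\ell$, so $J[\ell]$ is the generic fibre of a finite flat group scheme over $\ZZ_\ell$. As $\ell$ is odd, $\ZZ_\ell$ has absolute ramification index $1<\ell-1$, so by Raynaud's theorem the category of finite flat group schemes over $\ZZ_\ell$ is abelian and fully faithful inside the category of finite $G_{\QQ_\ell}$-modules; hence the composition factor $V_1$ of $J[\ell]$ is again finite flat at $\ell$. (This refines Proposition~\ref{P:amplitude}, which only gives that the tame inertia weights of $\rho_1|_{I_\ell}$ lie in $\{0,1\}$.) A finite flat mod $\ell$ representation is ``peu ramifi\'e'' at $\ell$, so the Serre weight of $\rho_1$ equals $2$.

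For the conductor, which is prime to $\ell$ by definition, it suffices to bound the exponent at each $p\in S$. By Proposition~\ref{P:bad primes}, $\rho_{J,\ell}(I_p)$ is cyclic of order $\ell$, generated by an element $u$ acting as in the Picard--Lefschetz formula (\ref{E:PL}); since the vanishing cycles are pairwise orthogonal and the pairing is alternating, $u$ satisfies $(u-1)^2=0$. Thus $\rho_1(I_p)$ is either trivial or generated by a single transvection, and in either case $\rho_1|_{I_p}$ is tamely ramified with conductor exponent $\dim_{\FF_\ell}V_1-\dim_{\FF_\ell}V_1^{I_p}\leq 1$. Hence the Serre conductor of $\rho_1$ divides $7\cdot 11\cdot 83=6391$.

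Applying the refined Serre conjecture to $\rho_1$ now produces a newform $f=q+\sum_{n\geq 2}a_n(f)q^n\in S_2(\Gamma_0(N))$ with $N\mid 6391$, together with a maximal ideal $\lambda$ over $\ell$ in the ring of integers of $\QQ(a_n(f))$, whose associated mod $\lambda$ representation is isomorphic to $\rho_1$. Comparing characteristic polynomials of Frobenius at the unramified primes gives $\tr(\rho_1(\Frob_p))\equiv a_p(f)\pmod{\lambda}$ for all $p\nmid N\ell$, in particular for all $p\notin S\cup\{\ell\}$. (Since $S_2(\Gamma_0(1))=0$, necessarily $N>1$.) The one substantive point beyond the black box of Khare--Wintenberger is the weight determination, i.e.\ that finite flatness of $J[\ell]$ at $\ell$ descends to the composition factor $\rho_1$; the conductor computation is routine given Proposition~\ref{P:bad primes}, and the dihedral subcase (where $\rho_1$ is irreducible but not absolutely irreducible) is classical.
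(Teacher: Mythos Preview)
Your argument is correct and reaches the same conclusion as the paper, but the weight computation takes a genuinely different route. The paper splits into cases according to whether $\rho_1|_{I_\ell}$ is reducible or irreducible, uses Proposition~\ref{P:amplitude} together with $\det\rho_1=\chi_\ell$ to identify the fundamental characters that appear, and then reads off $k=2$ from Serre's explicit recipe in \cite{MR885783}. Your approach---observing that $J[\ell]$ is finite flat over $\ZZ_\ell$ by good reduction and that this passes to the subquotient $V_1$, so that $\rho_1$ is automatically \emph{peu ramifi\'e} of weight~$2$---is more structural and sidesteps the case analysis entirely. (A small remark: the passage of finite flatness to subquotients already holds over any Dedekind base via schematic closure, so invoking Raynaud's full faithfulness is not strictly necessary for that step.) The paper also treats the nebentypus by hand: it first lands in $S_2(\Gamma_1(N))$ and then argues that the character $\varepsilon$ must be trivial, since a nontrivial $\varepsilon$ would force $\ell\in\{3,5,41\}$ dividing $|(\ZZ/N\ZZ)^\times|$, contrary to Lemma~\ref{L:excluded primes}. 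You fold this into the black box by noting that the Serre-predicted character is $\det\rho_1\cdot\chi_\ell^{1-k}=1$; this is legitimate, though it hides a step the paper chooses to make explicit. The conductor bounds are essentially identical in the two arguments.
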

\begin{proof}
The $2$-dimensional representation $\rho_1$ is irreducible and is also odd since $\det\rho_1 = \chi_\ell$.  \defi{Serre's conjecture} \cite{MR885783}, proved by Khare and Wintenberger \cites{MR2551763,MR2551764}, implies that the representation $\rho_1$ is isomorphic to one arising from some newform $f$.  Moreover, the newform $f=q+\sum_{n\geq 2} a_n(f) q^n$ can be found in $S_k(\Gamma_1(N))$ with prescribed weight $k$ and level $N$.  Let $K$ be the subfield of $\CC$ generated by the Fourier coefficients of $f$; it is a number field.  Let $\varepsilon\colon (\ZZ/N\ZZ)^\times \to K^\times$ be the nebentypus of $f$.    There is thus a maximal ideal $\lambda$ of the ring of integers of $K$ such that
\[
\det(xI - \rho_1(\Frob_p)) \equiv x^2 - a_p(f) x + \varepsilon(p) p^{k-1} \pmod{\lambda}
\]
for all primes $p\nmid N\ell$.  \\

Let us compute the weight $k$.   Suppose that $\rho_1|_{I_\ell}$ is reducible.  The semisimplification of $\rho_1|_{I_\ell}$ is then given by two characters $\varphi_1,\varphi_2 \colon I_\ell^t \to \FF_\ell^\times$.   By Proposition~\ref{P:amplitude}, each $\varphi_i$ is either $1$ or the fundamental character of level $1$.  Since the fundamental character of level $1$ is $ \chi_\ell |_{I_\ell}$, cf.~\cite{MR0387283}*{Prop.~8} and $\det \circ \rho_1 = \chi_\ell$, we deduce that $\{\varphi_1,\varphi_2\}=\{1,\chi_\ell |_{I_\ell}\}$.  In the notation of \S2.3 of \cite{MR885783}, we have $a=0$ and $b=1$, and hence $k=1+\ell a + b =2$.

Now suppose that $\rho_1|_{I_\ell}$ is irreducible.  As explained in \S\ref{S:inertia at ell} (and using Proposition~\ref{P:amplitude}), $\rho_1|_{I_\ell}$ factor through $I_\ell^t$ and is then isomorphic to a representation of the form
\[
I_\ell^t \xrightarrow{\phi^{e_1 + e_2 \ell}} \FF_{\ell^2}^\times \subseteq \Aut_{\FF_\ell}(\FF_{\ell^2}),
\]
where $\phi\colon I_\ell^t \to \FF_{\ell^2}^\times$ is a fundamental character of level $2$ and $0\leq e_1,e_2 \leq 1$.  We have $\{e_1,e_2\}=\{0,1\}$ since otherwise $\phi^{e_1 + e_2 \ell}$ would have image in $\FF_\ell^\times$ which would contradict the irreducibility of $\rho_1|_{I_\ell}$.   Therefore, the $I_\ell^t$-action on $V_1\otimes_{\FF_\ell} \FF_{\ell^2}$ is diagonalizable and is given by the characters $\phi,\phi^\ell \colon I_\ell^t\to \FF_{\ell^2}^\times$.      In the notation of \S2.2 of \cite{MR885783}, we may take $a=0$ and $b=1$, and hence $k=1+\ell a + b =2$.
\\

We now consider the level $N$.  The representation $\rho_{J,\ell}$, and hence also $\rho_1$, is unramified at all primes $p\notin S\cup \{\ell\}$.    Take any $p\in S$; we have $p\neq \ell$ by Lemma~\ref{L:excluded primes}.  Let $V_1^{I_p}$ be the subspace of $V_1$ fixed by $I_p$.     From \cite{MR885783}*{\S1.2}, the Artin conductor of $\rho_1$ is  $N := \prod_{p\in S} p^{n_p}$ where $n_p = \dim_{\FF_\ell} V_1/V_1^{I_p}$; there is no wild ramification since the cardinality of $\rho_{J,\ell}(I_p)$ is not divisible by $p$ by Lemma~\ref{P:bad primes}.   Since $\rho_{J,\ell}(I_p)$ is a group of order $\ell$ by Lemma~\ref{P:bad primes}, the group $\rho_1(I_p)$ has order $1$ or $\ell$.  If $\rho_1(I_p)$ has order $1$, then $n_p= 0$.   If $\rho_1(I_p)$ has order $\ell$, then it is conjugate in $\Aut_{\FF_\ell}(V_1) \cong \GL_2(\FF_\ell)$ to the group generated by $\left(\begin{smallmatrix}1 & 1 \\  0 & 1\end{smallmatrix}\right)$.  In this last case, we have $n_p=1$.   This completes the proof that $N$ divides $7\cdot 11 \cdot 83 = 6391$.\\

Finally, it remains to show that $f \in S_2(\Gamma_1(N))$ actually lies in $S_2(\Gamma_0(N))$; equivalently, that the nebentypus $\varepsilon$ is trivial.  Let $\mu$ be the image of $\varepsilon$; it is a finite group of roots of unity in $K$.  

With $\OO$ the ring of integers of $K$, the kernel of the reduction modulo $\lambda$ homomorphism $\mu \to (\OO/\lambda)^\times$ is an $\ell$-group.     For any $p\notin S\cup\{\ell\}$,  the equality $\det \circ \rho_1 = \chi_\ell$ implies that $\varepsilon(p) p \equiv  \chi_\ell(p)= p \pmod{\lambda}$ and hence $\varepsilon(p)\equiv 1 \pmod{\lambda}$.    Therefore, $\mu$ is an $\ell$-group.  Since $|\mu|$ divides the cardinality of $(\ZZ/N\ZZ)^\times \cong (\ZZ/7\ZZ)^\times \times (\ZZ/11\ZZ)^\times \times (\ZZ/83 \ZZ)^\times$, we deduce that $\mu=1$ or $\ell \in \{3, 5, 41\}$.   We have $\ell \notin \{3, 5, 41\}$ by Lemma~\ref{L:excluded primes}, so $\mu=1$ and hence $\varepsilon=1$.
\end{proof}

Take any prime $p\notin S \cup \{\ell\}$.   

Let $H_p(x)$ be the characteristic polynomial of the Hecke operator $T_p$ acting on $S_2(\Gamma_0(6391))$; it is monic with integer coefficients.
Take $f$ and $\lambda$ as in Lemma~\ref{L:Serre conjecture}.   Since $p \nmid 6391$, there is a cusp form $f' \in S_2(\Gamma_0(6391))$ such that $T_p(f') = a_p(f) f'$;  we can take $f'$ to be an oldform if $N$ properly divides $6391$.    Therefore, $H_p(a_p(f)) = 0$ and in particular $H_p(a_p(f))\equiv 0\pmod{\lambda}$.  Lemma~\ref{L:Serre conjecture} then implies that $\tr(\rho_1(\Frob_p))\in \FF_\ell$ is a root of $H_p(x)$.\\

Let $P_p(T)$ be the polynomial from \S\ref{S:good}.     Define the polynomial $Q_p(x) := {\prod}_{\alpha} (x - \alpha)$,  where $\alpha$ runs over the values $\lambda+p/\lambda$ with $\lambda\in \Qbar$ being a root of $P_p(x)$.         The polynomial $Q_p(x)$ is monic with integer coefficients.   Since $\det(\rho_1(\Frob_p))=\chi_\ell(\Frob_p)\equiv p \pmod{\ell}$, we have $\tr(\rho_1(\Frob_p)) = \lambda + p/\lambda$ for some root $\lambda\in \FFbar_\ell$ of $P_p(T)$.  Therefore, $\tr(\rho_1(\Frob_p)) \in \FF_\ell$ is a root of $Q_p(x)$ modulo $\ell$.     

A computation show that $Q_2(x)= x^3 + 3x^2 - 3$ for $Q_5(x)=x^3+4x^2-5x-23$.   For example, the following code gives $Q_2(x)$; one could also compute  $Q_2(x)$ using approximations for the roots of $P_2(T)$ in $\CC$ and use that $Q_2(x)$ has integer coefficients.  

{\small
\begin{verbatimtab}
	_<T>:=PolynomialRing(Rationals());
	p:=2;  P:=T^6+3*T^5+6*T^4+9*T^3+12*T^2+12*T+8;
	K:=SplittingField(P); Pol<x>:=PolynomialRing(K);
	&*[x-a : a in {r[1]+p/r[1]: r in Roots(Pol!P)}];
\end{verbatimtab}
}

Let $r_p$ be the resultant of $H_p(x)$ and $Q_p(x)$; it is an integer.   Since $\tr(\rho_1(\Frob_p)) \in \FF_\ell$ is a common root of $H_p(x)$ and $Q_p(x)$, we deduce that $\ell$ divides $r_p$.   \\ 

The \texttt{Magma} code below shows that the greatest common divisor of $r_2$ and $r_5$ is $3^{16}$. 
{\small
\begin{verbatimtab}
	Pol<x>:=PolynomialRing(Rationals());
	S:=CuspForms(Gamma0(7*11*83),2);
	H2:=Pol!HeckePolynomial(S,2);
	H5:=Pol!HeckePolynomial(S,5);
	r2:=Integers()!Resultant(H2,x^3+3*x^2-3);
	r5:=Integers()!Resultant(H5,x^3+4*x^2-5*x-23);
	GCD([r2,r5]) eq 3^16;
\end{verbatimtab}
}
Since $\ell$ divides $r_2$ and $r_5$, we must have $\ell=3$.    However, this is impossible by Lemma~\ref{L:excluded primes}.

This shows that the case $d_1=2$ does not occur.

\begin{remark}
To compute the Hecke polynomials, one could also use modular symbols (in our case, \texttt{Magma} does this approach much faster).   For example, one can compute $H_2(x)$ by the code:
{\small
\begin{verbatimtab}
	M:=CuspidalSubspace(ModularSymbols(7*11*83,2,1));
	CharacteristicPolynomial(HeckeOperator(M,2));
\end{verbatimtab}
}
\noindent We are not using $p=3$ in the above computations because $r_3=0$.
\end{remark}

\subsection{Three-dimensional case}  \label{SS:3d}

Suppose that $d_1=3$, and hence $r=2$ with $d_1=d_2=3$.   After possibly swapping $V_1$ and $V_2$, we may assume by Lemma~\ref{L:determinant of factors} that there is an integer $e\in \{0,1\}$ such that $\det \circ \rho_1 = \chi_\ell^{e}$.  

\begin{lemma} \label{L:dim 3}
Take any prime $p\notin S\cup \{\ell\}$.  If $\alpha,\beta, \gamma \in \FFbar_\ell$ are the roots of $\det(xI- \rho_1(\Frob_p)) \in \FF_\ell[x]$, then $p/\alpha,p/\beta, p/\gamma \in \FFbar_\ell$ are the roots of $\det(xI - \rho_2(\Frob_p))$.
\end{lemma}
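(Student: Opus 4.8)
The plan is to reduce everything to the single assertion that $V_2 \cong V_1^\vee(1)$ as $\FF_\ell[G_\QQ]$-modules; granting this, the statement about roots is an immediate matrix computation. Since $p \notin S \cup \{\ell\}$, the representation $\rho_{J,\ell}$, and hence each $\rho_i$, is unramified at $p$, so $\rho_i(\Frob_p)$ is well-defined up to conjugacy and the characteristic polynomials $\det(xI - \rho_i(\Frob_p))$ make sense.

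First I would show $V_2 \cong V_1^\vee(1)$. As recalled in \S\ref{SS:determinants}, the non-degenerate Weil pairing gives $J[\ell] \cong J[\ell]^\vee(1)$ as $\FF_\ell[G_\QQ]$-modules, so the multiset of composition factors $\{V_1, V_2\}$ equals $\{V_1^\vee(1), V_2^\vee(1)\}$. Thus either $V_1^\vee(1) \cong V_1$, or else $V_1^\vee(1) \cong V_2$ (and correspondingly $V_2^\vee(1) \cong V_1$). The first alternative is ruled out by Lemma~\ref{L:determinant of factors}(\ref{L:determinant of factors iv}): if $V_1^\vee(1) \cong V_1$ then $d_1 = \dim_{\FF_\ell} V_1$ would be even, contradicting $d_1 = 3$. (Alternatively: $\det \circ \rho_1 = \chi_\ell^e$ with $e \in \{0,1\}$, while the determinant of the action on $V_1^\vee(1)$ is $\chi_\ell^{d_1 - e} = \chi_\ell^{3-e}$ with $3 - e \in \{2,3\}$; these characters differ, so the uniqueness in Lemma~\ref{L:determinant of factors}(\ref{L:determinant of factors i}) forces $V_1^\vee(1) \not\cong V_1$.) Hence $V_2 \cong V_1^\vee(1)$.

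It then remains to unwind this isomorphism at $\Frob_p$. By definition $\sigma \in G_\QQ$ acts on $V_1^\vee(1)$ by $\chi_\ell(\sigma)\cdot \rho_1^*(\sigma)$, where $\rho_1^*(\sigma)$ is the transpose of $\rho_1(\sigma^{-1})$; taking $\sigma = \Frob_p$ and using $\chi_\ell(\Frob_p) \equiv p \pmod{\ell}$, the matrix $\rho_2(\Frob_p)$ is conjugate to $p\cdot \rho_1(\Frob_p^{-1})^{t}$. Since $\rho_1(\Frob_p) \in \GL_3(\FF_\ell)$ is invertible, its eigenvalues $\alpha, \beta, \gamma$ are nonzero, so $\rho_1(\Frob_p^{-1})$ has eigenvalues $1/\alpha, 1/\beta, 1/\gamma$; transposition preserves eigenvalues, and scaling by $p$ yields eigenvalues $p/\alpha, p/\beta, p/\gamma$ for $\rho_2(\Frob_p)$. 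These are therefore the roots of $\det(xI - \rho_2(\Frob_p))$, as claimed. I do not expect a genuine obstacle: the only delicate point is the pairing-up of composition factors, i.e.\ excluding the self-dual case $V_1^\vee(1) \cong V_1$, which is exactly where the odd parity $d_1 = 3$ enters via Lemma~\ref{L:determinant of factors}(\ref{L:determinant of factors iv}).
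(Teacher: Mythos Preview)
Your proposal is correct and follows essentially the same route as the paper: reduce to the isomorphism $V_2 \cong V_1^\vee(1)$, exclude the self-dual possibility $V_1^\vee(1)\cong V_1$ via Lemma~\ref{L:determinant of factors}(\ref{L:determinant of factors iv}) (odd dimension), and then read off the eigenvalues of $\chi_\ell(\Frob_p)\rho_1^*(\Frob_p)$. The only difference is cosmetic ordering---the paper does the eigenvalue computation first and the duality argument second---and your alternative determinant-comparison argument is a harmless variant of the same parity obstruction.
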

\begin{proof}
With notation as in the beginning of \S\ref{SS:determinants}, the roots of the characteristic polynomial of $\rho_1^*(\Frob_p)$ in $\FF_\ell[x]$ are $1/\alpha$, $1/\beta$ and $1/\gamma$.   Therefore, the roots of the characteristic polynomial of $\chi_\ell(\Frob_p) \rho_1^*(\Frob_p) = p \rho_1^*(\Frob_p)$ are $p/\alpha$, $p/\beta$ and $p/\gamma$.  It thus suffices to show that $V_2$ and $V_1^\vee(1)$ are isomorphic $\FF_\ell[G_\QQ]$-modules.  The $\FF_\ell[G]$-modules $V_1^\vee(1)$ is isomorphic to $V_1$ or $V_2$.   By Lemma~\ref{L:determinant of factors}(\ref{L:determinant of factors iv}), we must have $V_2\cong V_1^\vee(1)$.
\end{proof}

Take any prime $p\notin S\cup \{\ell\}$ and let $\alpha,\beta,\gamma \in \FFbar_\ell$ be the roots of $\det(x I -\rho_1(\Frob_p)) \in \FF_\ell[x]$.  Define the values $u:=\alpha+\beta+\gamma$ and $v:=\alpha\beta+\alpha\gamma+\beta\gamma$; they belong to $\FF_\ell$.  We have $\alpha\beta\gamma=\det(\rho_1(\Frob_p))=\chi_\ell(\Frob_p)^e\equiv p^e \pmod{\ell}$.    

Using Lemma~\ref{L:dim 3} and $\alpha\beta\gamma=p^e$, we find that the polynomial $P_p(T)$ modulo $\ell$ is equal to
\begin{align*}
&(T-\alpha)(T-\beta)(T-\gamma)(T-p/\alpha)(T-p/\beta)(T-p/\gamma)\\
=&(T^3-uT^2+v T - p^e)(T^3-p^{1-e} v T^2+p^{2-e} u T - p^{3-e})\\
=&T^6-(p^{1-e}v+u) T^5 + (p^{2-e} u + p^{1-e}uv+v) T^4 - (p^{3-e} +p^{2-e} u^2+p^{1-e}v^2+p^e) T^3 + \ldots.
\end{align*}

With $p=2$ and using the coefficients of $P_2(T)$ given in \S\ref{S:good}, we find that  for some $e\in \{0,1\}$, there are $u,v\in \FF_\ell$ such that
\begin{align} \label{E:dim 3}
2^{1-e}v+u = -3,\quad 2^{2-e} u + 2^{1-e}uv+v = 6,\quad 2^{3-e} +2^{2-e} u^2+2^{1-e}v^2+2^e=-9.
\end{align}

First consider the case $e=1$.   The equations (\ref{E:dim 3}) become
\begin{align*} 
v+u +3=0,\quad 2 u + uv+v - 6=0,\quad 2 u^2+v^2+15=0.
\end{align*}
Substituting $v=-3-u$ into the last two equations and using $\ell\neq 3$, we obtain $u^2 + 2u +9=0$ and $u^2 + 2u + 8=0$.  Therefore, $1=  (u^2 + 2u +9)-(u^2 + 2u + 8)=0-0=0$ which gives a contradiction.

We thus have $e=0$.   The equations (\ref{E:dim 3}) become
\begin{align*} 
2v+u+3=0,\quad 4u + 2uv+v -6=0,\quad 4 u^2+2v^2 +18=0.
\end{align*}
Substituting $u=-2v-3$ into the last two equations and using $\ell>3$, we obtain $4v^2 + 13v + 18=0$ and $3v^2 + 8v + 9=0$.  Therefore,
\[
 0 = 3(4v^2 + 13v + 18)-4(3v^2 + 8v + 9) = 7v + 18.
\]
Since $\ell \neq 7$, we have $v=-18/7$.   So $0= 3v^2 + 8v + 9 =3^4 5/7^2$ in $\FF_\ell$, which is a contradiction since $\ell>7$.

This shows that the case $d_1=3$ does not occur.

\section{Primitivity} \label{S:primitive}

In this section, we prove the following:

\begin{prop} \label{P:primitive}
The action of $G_\QQ$ on $J[\ell]$ is primitive for all odd primes $\ell$.
\end{prop}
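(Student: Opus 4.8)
The plan is to assume that $J[\ell]$ is an imprimitive $\FF_\ell[G_\QQ]$-module for some odd prime $\ell$ and derive a contradiction. Write $J[\ell]=W_1\oplus\cdots\oplus W_r$ with $r\geq 2$ and $G_\QQ$ permuting the $W_i$; since $J[\ell]$ is irreducible by Proposition~\ref{P:irreducible}, this permutation action is transitive, so $r\mid 6$ and $\dim_{\FF_\ell}W_i=6/r$, i.e. $r\in\{2,3,6\}$. First I would dispose of the primes $\ell\in\{3,5,7,11,41,83\}$ directly: for each of these the proof of Lemma~\ref{L:excluded primes} produces a prime $p\notin S\cup\{\ell\}$ with $P_p(T)$ irreducible modulo $\ell$, so $\rho_{J,\ell}(\Frob_p)$ generates a copy of $\FF_{\ell^6}$ acting on $J[\ell]$; an element that fixed some block would have a proper factor in its characteristic polynomial, so $\rho_{J,\ell}(\Frob_p)$ must move all blocks cyclically, forcing $P_p(T)\bmod\ell$ to be a polynomial in $T^r$, which it is not (its $T^5$-coefficient is nonzero). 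So I may assume $\ell\notin\{3,5,7,11,41,83\}$, hence $\ell\geq 13$ and $\ell\notin S$. By Proposition~\ref{P:bad primes}, $\rho_{J,\ell}(I_7)$ is generated by a transvection $t$, and I claim $t$ fixes every block: otherwise $t$ has a nontrivial orbit on $\{W_1,\dots,W_r\}$, hence at most $r-1$ orbits, and since the $t$-fixed subspace of the span of a single $t$-orbit of blocks has dimension $\leq 6/r$ we get $\dim(J[\ell])^{t=1}\leq 6-6/r$, forcing $r\geq 6$; the one surviving configuration ($r=6$, a transposed pair of blocks plus four fixed blocks on which $t$ is trivial, with $t$ having a $1$-dimensional fixed space on the pair) has $\det t=-1$, contradicting $\det t=1$. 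So $t$ fixes every block, and then for $r=6$ the matrix $t$ is diagonal with exactly one entry $\neq 1$, again contradicting $\det t=1$. Hence $r\in\{2,3\}$ (and likewise the transvection generating $\rho_{J,\ell}(I_{11})$ fixes every block).

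Next I would analyse the field $K$ cut out by the permutation homomorphism $\varpi\colon G_\QQ\to\mathfrak{S}_r$, i.e. $K=\Qbar^{\ker\varpi}$, so $\Gal(K/\QQ)\cong\varpi(G_\QQ)$ is a transitive subgroup of $\mathfrak{S}_r$ and $m:=[K:\QQ]\in\{2\}$ (if $r=2$) or $\{3,6\}$ (if $r=3$). Since $\rho_{J,\ell}(I_7)$ and $\rho_{J,\ell}(I_{11})$ are generated by transvections that fix every block, $\varpi$ is unramified at $7$ and $11$; since $\rho_{J,\ell}(I_{83})$ has order $\ell$ and $\gcd(\ell,6)=1$, $\varpi$ is unramified at $83$; and $\varpi$ is unramified at every prime of good reduction. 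Thus $K/\QQ$ is unramified outside $\ell$, and as $\gcd(\ell,6)=1$ the ramification at $\ell$ is tame, so $\varpi(I_\ell)$ is cyclic. This excludes the $\mathfrak{S}_3$-case: tame (hence cyclic) inertia at $\ell$ in an $\mathfrak{S}_3$-extension unramified outside $\ell$ has order $1$, $2$, or $3$, and in each case some proper subfield is unramified everywhere, which is impossible by Minkowski's theorem. Hence either $r=2$ and $K=\QQ(\sqrt{\ell^{*}})$ with $\ell^{*}=(-1)^{(\ell-1)/2}\ell$, or $r=3$, $m=3$, $\ell\equiv1\pmod 3$, and $K$ is the cubic subfield of $\QQ(\zeta_\ell)$. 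In both cases $K$ is ramified at $\ell$, so $\varpi(I_\ell)\neq 1$ equals $\Gal(K/\QQ)$, and therefore $I_\ell$ acts transitively on $\{W_1,\dots,W_r\}$.

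The final step is to contradict this. Choose $\tau\in I_\ell$ with $\varpi(\tau)$ an $r$-cycle; then $\rho_{J,\ell}(\tau)$ is a block-monomial matrix realizing that $r$-cycle, so its characteristic polynomial is $\det(T^rI_{6/r}-A)$ for some $A\in M_{6/r}(\FF_\ell)$, i.e. a polynomial in $T^r$, which means the multiset of eigenvalues of $\rho_{J,\ell}(\tau)$ in $\FFbar_\ell$ is stable under multiplication by a primitive $r$-th root of unity $\omega$ (and $\omega\in\FF_\ell$: automatic for $r=2$, and valid for $r=3$ since $\ell\equiv1\bmod 3$). Replacing $\rho_{J,\ell}$ by its semisimplification does not change this characteristic polynomial, and since wild inertia acts trivially on every irreducible $\FF_\ell[I_\ell]$-module, $\rho_{J,\ell}^{\sss}|_{I_\ell}$ factors through the tame quotient; by Proposition~\ref{P:amplitude} each of its composition factors is given by a fundamental character whose tame weights all lie in $\{0,1\}$, so it contributes to the eigenvalue multiset a full Galois orbit of roots of unity of large order. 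An elementary computation with these orders shows that, for $\ell\geq 13$, no such Galois orbit can be stable under multiplication by $\omega$; hence the composition factors of $J[\ell]|_{I_\ell}$ are permuted by $\omega$ in orbits of size exactly $r$, so their $\FF_\ell$-dimensions occur in $r$-tuples of equal values summing to $6$, and in particular every composition factor has dimension $\leq 3$. But by Lemma~\ref{L:amplitude cor}(i) a composition factor of dimension $d$ has determinant $\chi_\ell^{e}$ with $0\leq e\leq d\leq 3$, whereas twisting by the order-$r$ character sending $\tau$ to $\omega$ changes $e$ by $d(\ell-1)/r$ modulo $\ell-1$; comparing the two factors in an $\omega$-orbit of odd dimension (when $r=2$), or of dimension $1$ or $2$ (when $r=3$), forces $(\ell-1)/r\leq 3$, contradicting $\ell\geq 13$. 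This contradiction shows $J[\ell]$ is primitive.

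The hard part is this last step. Frobenius information at finitely many primes does not suffice on its own — for any finite set of primes $p$ there are infinitely many $\ell$ for which all of them split in $\QQ(\sqrt{\ell^{*}})$, so one cannot rule out $r=2$ by a finite computation of the $P_p$ alone. The point is that the bound on tame inertia weights (Proposition~\ref{P:amplitude}) controls the eigenvalues of a generator of tame inertia well enough to be incompatible with $I_\ell$ permuting the blocks transitively; the actual work lies in the bookkeeping with orders of roots of unity in $\FFbar_\ell$ and with the relation between tame weights and determinants.
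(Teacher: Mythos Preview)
Your approach differs substantially from the paper's, and there is a genuine gap.  The paper shows directly that the permutation representation $\varphi\colon G_\QQ\to\mathfrak S_r$ is unramified at \emph{every} prime, including $\ell$, and hence trivial by Minkowski.  For the prime $\ell$ this is done via Lemma~\ref{L:amplitude cor}(\ref{L:amplitude cor ii}): since $[I_\ell:\ker(\varphi|_{I_\ell})]\leq r!\leq 6<\ell-1$, every irreducible $I_\ell$-constituent of $J[\ell]$ remains irreducible on restriction to $\ker(\varphi|_{I_\ell})$, and this forces $I_\ell$ to stabilise each $W_i$.  Your approach instead tries to show that $\varpi(I_\ell)\neq 1$ leads to a contradiction through an eigenvalue--symmetry and determinant comparison, which is considerably more intricate and amounts to reproving a special case of that lemma by hand.

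The gap is in your elimination of the $\mathfrak S_3$ case.  The claim that ``some proper subfield is unramified everywhere'' is false when inertia at $\ell$ has order~$2$: the unique quadratic subfield $L=K^{A_3}=\QQ(\sqrt{\ell^*})$ is ramified at $\ell$ (a transposition has nontrivial image in $\mathfrak S_3/A_3$), and each non-normal cubic subfield is also ramified at $\ell$ (it carries a prime of ramification index~$2$).  Such $\mathfrak S_3$-extensions of $\QQ$, unramified outside $\ell$ with order-$2$ inertia at $\ell$, genuinely exist whenever $3\mid h(\QQ(\sqrt{\ell^*}))$; for instance $\ell=23$ gives $h(\QQ(\sqrt{-23}))=3$, and the Hilbert class field of $\QQ(\sqrt{-23})$ is an $\mathfrak S_3$-extension of $\QQ$ of exactly this type.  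In that situation $\varpi(I_\ell)$ contains no $3$-cycle, so your subsequent argument---which begins by choosing $\tau\in I_\ell$ with $\varpi(\tau)$ an $r$-cycle---does not apply.  (The gap is patchable by your own method: if inertia transposes $W_1,W_2$ and fixes $W_3$, the four eigenvalues of a generator on the $I_\ell$-stable subspace $W_1\oplus W_2$ form a $(-1)$-stable multiset, and one can rerun your Frobenius-orbit/weight comparison with $r$ replaced by $2$ on this $4$-dimensional piece.)  There is also a small slip in the first step: for $\ell=5$ the prime $p=41$ supplied by Lemma~\ref{L:excluded primes} has $a_{41}=0$, so the ``$T^5$-coefficient is nonzero'' test fails; one must either use a different prime (the paper takes $p=43$ in Lemma~\ref{L:prime exclusions 2}) or check a different coefficient.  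The paper's route via Lemma~\ref{L:amplitude cor}(\ref{L:amplitude cor ii}) avoids all of this case analysis.
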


Suppose that there is an odd prime $\ell$ for which the action of $G_\QQ$ on $J[\ell]$ is imprimitive.   Hence there is an integer $r\geq 2$ and non-zero $\FF_\ell$-subspaces $W_1,\ldots, W_r$ of $J[\ell]$ such that $J[\ell] = W_1 \oplus \cdots \oplus W_r$ and such that 
\[
\{\sigma(W_1),\ldots, \sigma(W_r)\}= \{W_1,\ldots, W_r\}
\]
for all $\sigma\in G_\QQ$.   The $G_\QQ$-action on the set $\{W_1,\ldots, W_r\}$ must be transitive since $G_\QQ$ acts irreducibly on $J[\ell]$ by Proposition~\ref{P:irreducible}.    In particular, $\dim_{\FF_\ell} W_i$ is independent of $i$  and hence equals $6/r$.   Therefore, $r\in \{2,3,6\}$.

\begin{lemma}   \label{L:prime exclusions 2}
We have $\ell \notin \{3,5,7,11,83\}$.
\end{lemma}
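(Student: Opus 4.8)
The plan is to follow the template of the proof of Lemma~\ref{L:excluded primes}, strengthening the input on the Frobenius polynomials slightly. Throughout we keep the standing assumption that $J[\ell]$ is imprimitive; by the paragraph preceding this lemma, together with Proposition~\ref{P:irreducible}, the blocks $W_1,\dots,W_r$ all have $\FF_\ell$-dimension $6/r$, the $G_\QQ$-action permutes them transitively, and $r\in\{2,3,6\}$.

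The first step is to see how imprimitivity constrains the reduction of a Frobenius polynomial. Fix a prime $p\notin S\cup\{\ell\}$ and put $g:=\rho_{J,\ell}(\Frob_p)$, so $\det(TI-g)\equiv P_p(T)\pmod\ell$. Since $\Frob_p\in G_\QQ$, the cyclic group $\langle g\rangle$ permutes the subspaces $W_1,\dots,W_r$; let $\ell_1,\dots,\ell_s$ be the sizes of its orbits, so that $\sum_j\ell_j=r$. Summing the blocks within each orbit decomposes $J[\ell]$ as a direct sum of $\langle g\rangle$-submodules $U_1,\dots,U_s$, and $U_j$ is $\langle g\rangle$-isomorphic to the representation induced from the action of $g^{\ell_j}$ on any one block of the $j$-th orbit. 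A short linear-algebra computation of the characteristic polynomial of such an induced module (valid with no squarefreeness hypothesis) then gives
\[
P_p(T)\ \equiv\ \prod_{j=1}^{s}\delta_j\!\left(T^{\ell_j}\right)\pmod\ell
\]
for monic polynomials $\delta_j\in\FF_\ell[T]$ of degree $6/r$.

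From this the criterion I want drops out. If, for our chosen $p$, the polynomial $P_p(T)\bmod\ell$ is \emph{irreducible} of degree $6$, then the product above cannot split into two or more nonconstant factors, forcing $s=1$; hence $\ell_1=r$ and $P_p(T)\bmod\ell=\delta_1(T^r)$ is a polynomial in $T^r$, with $r\in\{2,3,6\}$. In particular $P_p(T)\bmod\ell$ is then a polynomial in $T^2$ (when $r\in\{2,6\}$) or in $T^3$ (when $r=3$). So it suffices, for each $\ell\in\{3,5,7,11,83\}$, to produce a prime $p\notin S\cup\{\ell\}$ with $P_p(T)\bmod\ell$ irreducible of degree $6$ and not a polynomial in $T^2$ nor in $T^3$; this contradicts imprimitivity and excludes $\ell$.

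For the concrete check I would simply reuse the irreducibility computations already recorded in the proof of Lemma~\ref{L:excluded primes}: $P_{17}(T)$ is irreducible modulo $3$, $P_{41}(T)$ modulo $5$, $P_2(T)$ modulo $7$ and modulo $11$, and $P_{19}(T)$ modulo $83$. Reducing the explicit polynomials listed in \S\ref{S:good} modulo the relevant $\ell$, one sees by inspection that none of these reductions is a polynomial in $T^2$ or in $T^3$ (for instance $P_{41}(T)\equiv T^6+2T^4+3T^3+2T^2+1\pmod 5$, which has both a $T^3$ and a $T^2$ term). The only nonroutine ingredient is the characteristic-polynomial identity of the first step, so the point to be careful about is that computation — in particular allowing orbits of size $1$ (fixed blocks) and checking that the induced-module formula does not require $P_p(T)\bmod\ell$ to be separable; everything else is quoted from Lemma~\ref{L:excluded primes} or is a one-line inspection of the table in \S\ref{S:good}.
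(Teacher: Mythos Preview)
Your argument is correct, but it is more elaborate than what the paper actually does. Both proofs exploit the same tension: an irreducible $P_p(T)\bmod\ell$ forces the Frobenius element to act as a single $r$-cycle on the blocks $W_1,\dots,W_r$, and one then checks this is incompatible with the shape of $P_p$. You carry out the full induced-representation computation to get $P_p(T)\equiv\delta(T^r)\pmod\ell$ and then verify the reduction is not a polynomial in $T^2$ or $T^3$. The paper instead uses only the crudest consequence: if no block is fixed by $\rho_{J,\ell}(\Frob_p)$ then every diagonal block of its matrix (in a basis adapted to $\bigoplus W_i$) is zero, so the trace $-a_p$ vanishes modulo $\ell$; hence it suffices to find $p$ with $P_p$ irreducible and $a_p\not\equiv 0\pmod\ell$.

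The practical difference shows up at $\ell=5$: you can recycle $p=41$ from Lemma~\ref{L:excluded primes} (even though $a_{41}=0$), while the paper must switch to $p=43$ to get a nonzero trace. So your route buys a little flexibility in the choice of auxiliary prime, at the cost of proving the block-cyclic characteristic-polynomial identity; the paper's route is a one-line trace observation but requires tailoring the prime. Both are perfectly valid.
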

\begin{proof}
Suppose $\ell \in \{3,5,7,11,83\}$.   We claim that there is a prime $p\notin S \cup \{\ell\}$ such that the polynomial $P_p(x) = x^6 + a_p x^5 + b_p x^4 +c_p x^3 + p b_p x^2 +p^2 a_p x+ p^3$ is irreducible in $\FF_\ell[x]$ and such that $\ell \nmid a_p$.   From the polynomials given in \S\ref{S:good}, the claim is true with $p=2$ if $\ell \in \{7,11\}$, $p=17$ if $\ell \in \{3\}$, $p=19$ if $\ell \in \{83\}$ and $p=43$ if $\ell \in \{5\}$.

We have $\tr(\rho_{J,\ell}(\Frob_p))\equiv -a_p \not\equiv 0 \pmod{\ell}$.   The matrix $\rho_{J,\ell}(\Frob_p)$ permutes the spaces $W_1,\ldots, W_r$.   The matrix $\rho_{J,\ell}(\Frob_p)$ thus stabilizes some $W_j$ since otherwise $\tr(\rho_{J,\ell}(\Frob_p))=0$.    However, this is impossible since $\det(xI - \rho_{J,\ell}(\Frob_p))\equiv P_p(x) \pmod{\ell}$ is irreducible.     Therefore, $\ell \notin \{3,5,7,11,83\}$.
\end{proof}

The action of $G_\QQ$ on the set $\{W_1,\ldots, W_r\}$ can be expressed as a representation
\[
\varphi\colon G_\QQ \to \mathfrak{S}_r,
\]
i.e., $\sigma(W_i)= W_{\varphi(\sigma) i}$ for $1\leq i \leq r$ and $\sigma \in G_\QQ$.

\begin{lemma} \label{L:phi unramified}
The representation $\varphi$ is unramified at all primes $p$.
\end{lemma}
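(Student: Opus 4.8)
The plan is to treat separately the primes $p\notin S\cup\{\ell\}$, the primes $p\in S$, and the prime $p=\ell$, the last being the only real difficulty. I will repeatedly use two facts: that $\varphi$ factors through $\rho_{J,\ell}(G_\QQ)$, since $\varphi(\sigma)$ is just the permutation of $\{W_1,\dots,W_r\}$ induced by $\rho_{J,\ell}(\sigma)$; and that Lemma~\ref{L:prime exclusions 2}, together with $\ell$ odd and $\ell\notin S$, gives $\ell\geq 13$, so that, since $r\in\{2,3,6\}$, the order of $\mathfrak S_r$ divides $6!=720$ and is prime to $\ell$. For $p\notin S\cup\{\ell\}$ the representation $\rho_{J,\ell}$, hence $\varphi$, is unramified at $p$. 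For $p\in S$, the group $\varphi(I_p)$ is a quotient of $\rho_{J,\ell}(I_p)$, which is cyclic of order $\ell$ by Proposition~\ref{P:bad primes}; as $\varphi(I_p)\subseteq\mathfrak S_r$ has order prime to $\ell$, it must be trivial.

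For $p=\ell$ I would argue by contradiction, so assume $\varphi(I_\ell)\neq 1$. The wild inertia $\calP$ is pro-$\ell$ and $\ell\nmid|\mathfrak S_r|$, so $\varphi|_{I_\ell}$ kills $\calP$ and factors through the procyclic group $I_\ell^t$; hence $\varphi(I_\ell)$ is cyclic of some order $d$ with $1<d\leq 6<\ell-1$. Since $\varphi(I_\ell)\neq 1$, some $W_i$ has an $I_\ell$-orbit of size $k>1$; after relabelling I take it to be $W_1$, set $H_1:=\operatorname{Stab}_{I_\ell}(W_1)$ (so $\calP\subseteq H_1$, $[I_\ell:H_1]=k\mid d$, and $I_\ell/H_1$ is cyclic of order $k$ prime to $\ell$), and let $U:=W_1\oplus\cdots\oplus W_k$ be the sum over the orbit of $W_1$. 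Then $U$ is an $\FF_\ell[I_\ell]$-submodule of $J[\ell]$, and since $I_\ell$ permutes the summands transitively with $H_1$ the stabilizer of $W_1$, there is an isomorphism $U\cong\operatorname{Ind}_{H_1}^{I_\ell}(W_1)$ of $\FF_\ell[I_\ell]$-modules.

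The two key inputs now enter. Because $[I_\ell:H_1]<\ell-1$, Lemma~\ref{L:amplitude cor}(\ref{L:amplitude cor ii}) shows that every $\FF_\ell[I_\ell]$-composition factor of $J[\ell]$ stays irreducible on restriction to $H_1$, so $J[\ell]|_{I_\ell}$ and $J[\ell]|_{H_1}$ have the same composition factors; in particular every $\FF_\ell[H_1]$-composition factor of the submodule $W_1$ has the form $V|_{H_1}$ for an irreducible $\FF_\ell[I_\ell]$-module $V$ occurring in $J[\ell]$. Fixing such $V$ and $W_1'=V|_{H_1}$, exactness of induction makes $\operatorname{Ind}_{H_1}^{I_\ell}(W_1')$ a subquotient of $U$, hence of $J[\ell]$, so all of its $\FF_\ell[I_\ell]$-composition factors have tame inertia weights in $\{0,1\}$ by Proposition~\ref{P:amplitude}; on the other hand the projection formula gives $\operatorname{Ind}_{H_1}^{I_\ell}(W_1')\cong V\otimes_{\FF_\ell}\FF_\ell[I_\ell/H_1]$. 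Passing to $\FFbar_\ell$, the module $V\otimes\FFbar_\ell$ is a sum of characters $\psi_j$ of $I_\ell^t$, each a composition factor of $J[\ell]\otimes\FFbar_\ell$ and so, again by Proposition~\ref{P:amplitude}, with tame weights in $\{0,1\}$, while $\FFbar_\ell[I_\ell/H_1]\cong\bigoplus_{i=0}^{k-1}\omega^i$ for a character $\omega$ of $I_\ell^t$ of exact order $k$. Hence, fixing $j$, all $k$ characters $\psi_j,\psi_j\omega,\dots,\psi_j\omega^{k-1}$ have tame weights in $\{0,1\}$. To see this is impossible, choose a level $n$ with $\psi_j^{\ell^n-1}=\omega^{\ell^n-1}=1$ and a fundamental character $\phi$ of level $n$, and write $\psi_j=\phi^c$, $\omega=\phi^a$; since $\omega$ has exact order $k$ we may take $a=u(\ell^n-1)/k$ with $u$ coprime to $k$, while the condition that $\phi^b$ have tame weights in $\{0,1\}$ amounts to $b\bmod(\ell^n-1)$ lying in $B:=\{\sum_{i\in T}\ell^i:T\subseteq\{0,\dots,n-1\}\}$, and $B\subseteq\{0,1,\dots,N\}$ with $N=(\ell^n-1)/(\ell-1)$. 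But $\{c+ia:0\leq i\leq k-1\}$ consists of $k$ distinct, equally spaced points of $\ZZ/(\ell^n-1)\ZZ$ with gap $(\ell^n-1)/k$, whose shortest enclosing arc has length $(\ell^n-1)(k-1)/k$; for $k\geq 2$ and $\ell\geq 13$ this exceeds $N$, so these points cannot all lie in $\{0,\dots,N\}$. The contradiction forces $\varphi(I_\ell)=1$.

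The main obstacle is precisely the case $p=\ell$: the first two cases are immediate, but ruling out tame ramification at $\ell$ cannot be done abstractly (there are genuine number fields ramified only at $\ell$, even tamely, with Galois group a subquotient of $\mathfrak S_6$), so the proof must exploit both the tame-torus rigidity packaged in Lemma~\ref{L:amplitude cor}(\ref{L:amplitude cor ii}) and Raynaud's restriction on the tame inertia weights from Proposition~\ref{P:amplitude}, linked through the induced-module structure of the orbit sum $U$. I also expect the elementary combinatorics --- that a coset of an order-$k$ subgroup of $\ZZ/(\ell^n-1)\ZZ$ cannot be contained in the set of exponents producing tame weights in $\{0,1\}$ --- to require some careful base-$\ell$ bookkeeping.
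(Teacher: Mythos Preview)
Your proof is correct. The three-case split and the handling of $p\notin S\cup\{\ell\}$ and $p\in S$ match the paper exactly.

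For $p=\ell$ the paper takes a shorter and more direct route. With $H:=\ker(\varphi|_{I_\ell})$ (so $[I_\ell:H]\leq 6<\ell-1$), it picks an irreducible $H$-submodule $\calW_i\subseteq W_i$, forms $\calV_i:=\sum_{\sigma\in I_\ell}\sigma(\calW_i)$, and asserts that $\calV_i$ is an irreducible $I_\ell$-module; Lemma~\ref{L:amplitude cor}(\ref{L:amplitude cor ii}) then gives $\calV_i=\calW_i$, and since $\calW_i\subseteq W_i$ while $I_\ell$ permutes the $W_j$, one reads off $\sigma(W_i)=W_i$ for all $\sigma\in I_\ell$. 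Your argument is a different packaging of the same tame-inertia rigidity: instead of asserting $\calV_i=\calW_i$, you pass to the stabilizer $H_1$ of $W_1$, recognize the orbit sum as $\operatorname{Ind}_{H_1}^{I_\ell}(W_1)$, use the projection formula to write a subquotient as $V\otimes\FF_\ell[I_\ell/H_1]$, and then carry out the weight combinatorics explicitly over $\FFbar_\ell$ to obtain a contradiction. The paper's version is slicker but the key step (irreducibility of $\calV_i$, equivalently $\calV_i=\calW_i$) is stated rather tersely; your version makes the underlying mechanism---that twisting a character with tame weights in $\{0,1\}$ by all powers of a nontrivial $\omega$ of order $k\geq 2$ cannot keep all weights in $\{0,1\}$ once $\ell\geq 5$---completely transparent, at the cost of some length. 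Both approaches ultimately rest on Proposition~\ref{P:amplitude} and Lemma~\ref{L:amplitude cor}(\ref{L:amplitude cor ii}).
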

\begin{proof}
The representation $\varphi$ factors through $\rho_{J,\ell}$.     Therefore, $\varphi$ is unramified at all primes $p\in S \cup \{\ell\}$.   Suppose that $p \in S$.  Since $\ell \notin S$ by Lemma~\ref{L:prime exclusions 2}, we have $p\neq \ell$ and hence $\rho_{J,\ell}(I_p)$ has order $\ell$ by Proposition~\ref{P:bad primes}.    Therefore, $\varphi(I_p)$ has order $1$ or $\ell$.   We have $r\leq 6$, so $\ell$ does not divide $|\mathfrak{S}_r|=r!$ by Lemma~\ref{L:prime exclusions 2}.  Therefore, $\varphi(I_p)=1$.

Finally suppose that $p=\ell$ and $p\notin S$.    We have $\ell\nmid |\varphi(I_\ell)|$ since $\ell \nmid |\mathfrak{S}_r|$.  Therefore, $\varphi(I_\ell) \subseteq \mathfrak{S}_r$ is cyclic of order at most $r\leq 6$ (as noted in \S\ref{S:inertia at ell}, the tame inertia group at $\ell$ is pro-cyclic).   Let $H$ be the kernel of $\varphi|_{I_\ell}$; we have $[I_\ell : H] = |\varphi(I_\ell)| \leq 6 < \ell-1$.   

Take any $i \in \{1,\ldots, r\}$.   The group $H$ acts on $W_i$ so there is an irreducible $H$-submodule $\calW_i$ of $W_i$.  Define $\calV_i := \sum_{\sigma \in I_\ell} \sigma (\calW_i)$; it is an irreducible $I_\ell$-module.  Lemma~\ref{L:amplitude cor}(\ref{L:amplitude cor ii}) and $[I_\ell:H]<\ell-1$ implies that $\calV_i = \calW_i$.     For any $\sigma\in I_\ell$, we have $\sigma(\calW_i) = \calW_i \subseteq W_i$.  Since $I_\ell$ permutes the spaces $W_1,\ldots, W_r$, we deduce that $\sigma(W_i)= W_i$ for all $\sigma\in I_\ell$.    Since $i$ was arbitrary, we find that $I_\ell$ acts on all the spaces $W_i$ and hence $\varphi(I_\ell)=1$.
\end{proof}

Since $\QQ$ has no non-trivial extensions unramified at all primes, Lemma~\ref{L:phi unramified} implies that $\varphi=1$.   Therefore, $\sigma(W_i) = W_i$ for all $\sigma\in G_\QQ$ and $1\leq i \leq r$.    However, this implies that the action of $G_\QQ$ on $J[\ell]$ is reducible which contradicts Proposition~\ref{P:irreducible}.     Therefore, the action of $G_\QQ$ on $J[\ell]$ is in fact primitive and this completes the proof of Proposition~\ref{P:primitive}.

\section{Proof of Theorem~\ref{T:main}}

Take any odd prime $\ell$.   The group $\rho_{J,\ell}(G_\QQ) \subseteq \GSp_{6}(\FF_\ell)$ contains a transvection by Proposition~\ref{P:bad primes}.   By Propositions~\ref{P:irreducible} and \ref{P:primitive}, the representation $\rho_{J,\ell}$ is irreducible and primitive.    By Proposition~\ref{P:mod ell group theory}, we deduce that $\rho_{J,\ell}(G_\QQ) \supseteq \Sp_{6}(\FF_\ell)$.  We also have $\rho_{J,2}(G_\QQ)=\GSp_6(\FF_2)$ by Lemma~\ref{L:mod 2 case}.

From Proposition~\ref{P:reduce to mod ell cases}, we can now conclude that $\rho_J(G_\QQ)=\GSp_6(\Zhat)$.



\begin{bibdiv}
\begin{biblist}

\bib{Magma}{article}{
      author={Bosma, Wieb},
      author={Cannon, John},
      author={Playoust, Catherine},
       title={The {M}agma algebra system. {I}. {T}he user language},
        date={1997},
     journal={J. Symbolic Comput.},
      volume={24},
      number={3-4},
       pages={235\ndash 265},
        note={Computational algebra and number theory (London, 1993)},
}

\bib{MR0244257}{article}{
      author={Bass, H.},
      author={Milnor, J.},
      author={Serre, J.-P.},
       title={Solution of the congruence subgroup problem for {${\rm
  SL}\sb{n}\,(n\geq 3)$} and {${\rm Sp}\sb{2n}\,(n\geq 2)$}},
        date={1967},
        ISSN={0073-8301},
     journal={Inst. Hautes \'Etudes Sci. Publ. Math.},
      number={33},
       pages={59\ndash 137},
      review={\MR{MR0244257 (39 \#5574)}},
}

\bib{MR2372809}{article}{
      author={Caruso, Xavier},
       title={Conjecture de l'inertie mod\'er\'ee de {S}erre},
        date={2008},
        ISSN={0020-9910},
     journal={Invent. Math.},
      volume={171},
      number={3},
       pages={629\ndash 699},
      review={\MR{2372809 (2008j:14034)}},
}

\bib{Atlas}{book}{
      author={Conway, J.~H.},
      author={Curtis, R.~T.},
      author={Norton, S.~P.},
      author={Parker, R.~A.},
      author={Wilson, R.~A.},
       title={Atlas of finite groups},
   publisher={Oxford University Press},
     address={Eynsham},
        date={1985},
        ISBN={0-19-853199-0},
        note={Maximal subgroups and ordinary characters for simple groups, With
  computational assistance from J. G. Thackray},
      review={\MR{MR827219 (88g:20025)}},
}

\bib{MR1440067}{article}{
      author={Chavdarov, Nick},
       title={The generic irreducibility of the numerator of the zeta function
  in a family of curves with large monodromy},
        date={1997},
        ISSN={0012-7094},
     journal={Duke Math. J.},
      volume={87},
      number={1},
       pages={151\ndash 180},
      review={\MR{MR1440067 (99d:11071)}},
}

\bib{MR0463174}{book}{
      author={Deligne, P.},
       title={Cohomologie \'etale},
      series={Lecture Notes in Mathematics, Vol. 569},
   publisher={Springer-Verlag},
     address={Berlin},
        date={1977},
        note={S{\'e}minaire de G{\'e}om{\'e}trie Alg{\'e}brique du Bois-Marie
  SGA 4 1/2, Avec la collaboration de J. F. Boutot, A. Grothendieck, L. Illusie
  et J. L. Verdier},
      review={\MR{0463174 (57 \#3132)}},
}

\bib{MR1969642}{article}{
      author={Dieulefait, Luis~V.},
       title={Explicit determination of the images of the {G}alois
  representations attached to abelian surfaces with {${\rm End}(A)=\Bbb Z$}},
        date={2002},
        ISSN={1058-6458},
     journal={Experiment. Math.},
      volume={11},
      number={4},
       pages={503\ndash 512 (2003)},
      review={\MR{1969642 (2004b:11069)}},
}

\bib{MR926276}{book}{
      author={Freitag, Eberhard},
      author={Kiehl, Reinhardt},
       title={\'{E}tale cohomology and the {W}eil conjecture},
      series={Ergebnisse der Mathematik und ihrer Grenzgebiete (3) [Results in
  Mathematics and Related Areas (3)]},
   publisher={Springer-Verlag},
     address={Berlin},
        date={1988},
      volume={13},
        ISBN={3-540-12175-7},
        note={Translated from the German by Betty S. Waterhouse and William C.
  Waterhouse, With an historical introduction by J. A. Dieudonn{\'e}},
      review={\MR{926276 (89f:14017)}},
}

\bib{MR2778661}{article}{
      author={Greicius, Aaron},
       title={Elliptic curves with surjective adelic {G}alois representations},
        date={2010},
        ISSN={1058-6458},
     journal={Experiment. Math.},
      volume={19},
      number={4},
       pages={495\ndash 507},
      review={\MR{2778661}},
}

\bib{MR2372151}{article}{
      author={Hall, Chris},
       title={Big symplectic or orthogonal monodromy modulo {$\ell$}},
        date={2008},
        ISSN={0012-7094},
     journal={Duke Math. J.},
      volume={141},
      number={1},
       pages={179\ndash 203},
      review={\MR{2372151 (2008m:11112)}},
}

\bib{MR2820155}{article}{
      author={Hall, Chris},
       title={An open-image theorem for a general class of abelian varieties},
        date={2011},
        ISSN={0024-6093},
     journal={Bull. Lond. Math. Soc.},
      volume={43},
      number={4},
       pages={703\ndash 711},
        note={With an appendix by Emmanuel Kowalski},
      review={\MR{2820155 (2012f:11115)}},
}

\bib{MR2551763}{article}{
      author={Khare, Chandrashekhar},
      author={Wintenberger, Jean-Pierre},
       title={Serre's modularity conjecture. {I}},
        date={2009},
        ISSN={0020-9910},
     journal={Invent. Math.},
      volume={178},
      number={3},
       pages={485\ndash 504},
      review={\MR{2551763 (2010k:11087)}},
}

\bib{MR2551764}{article}{
      author={Khare, Chandrashekhar},
      author={Wintenberger, Jean-Pierre},
       title={Serre's modularity conjecture. {II}},
        date={2009},
        ISSN={0020-9910},
     journal={Invent. Math.},
      volume={178},
      number={3},
       pages={505\ndash 586},
      review={\MR{2551764 (2010k:11088)}},
}

\bib{MR0568299}{book}{
      author={Lang, Serge},
      author={Trotter, Hale},
       title={Frobenius distributions in {${\rm GL}_{2}$}-extensions},
      series={Lecture Notes in Mathematics, Vol. 504},
   publisher={Springer-Verlag},
     address={Berlin},
        date={1976},
        note={Distribution of Frobenius automorphisms in
  ${{\rm{G}}L}_{2}$-extensions of the rational numbers},
      review={\MR{MR0568299 (58 \#27900)}},
}

\bib{Lombardo}{misc}{
      author={Lombardo, Davide},
       title={Explicit open image theorems for some abelian varieties with trivial endomorphism ring},
        date={2015},
        note={arXiv:1508.01293 [math.NT]},
}

\bib{bla}{misc}{
      author={Anni, Samuele Anni}, author={Lemos, Pedro}, author={Siksek, Samir},
       title={Residual representations of semistable principally polarized abelian varieties},
        date={2015},
        note={arXiv:1508.00211 [math.NT]},
}

\bib{MR559531}{book}{
      author={Milne, James~S.},
       title={\'{E}tale cohomology},
      series={Princeton Mathematical Series},
   publisher={Princeton University Press},
     address={Princeton, N.J.},
        date={1980},
      volume={33},
        ISBN={0-691-08238-3},
      review={\MR{559531 (81j:14002)}},
}

\bib{MR0419467}{article}{
      author={Raynaud, Michel},
       title={Sch\'emas en groupes de type {$(p,\dots, p)$}},
        date={1974},
        ISSN={0037-9484},
     journal={Bull. Soc. Math. France},
      volume={102},
       pages={241\ndash 280},
      review={\MR{0419467 (54 \#7488)}},
}

\bib{MR0419358}{article}{
      author={Ribet, Kenneth~A.},
       title={On {$l$}-adic representations attached to modular forms},
        date={1975},
        ISSN={0020-9910},
     journal={Invent. Math.},
      volume={28},
       pages={245\ndash 275},
      review={\MR{0419358 (54 \#7379)}},
}

\bib{MR0387283}{article}{
      author={Serre, Jean-Pierre},
       title={Propri\'et\'es galoisiennes des points d'ordre fini des courbes
  elliptiques},
        date={1972},
        ISSN={0020-9910},
     journal={Invent. Math.},
      volume={15},
      number={4},
       pages={259\ndash 331},
      review={\MR{MR0387283 (52 \#8126)}},
}

\bib{MR885783}{article}{
      author={Serre, Jean-Pierre},
       title={Sur les repr\'esentations modulaires de degr\'e {$2$} de {${\rm
  Gal}(\overline{\bf Q}/{\bf Q})$}},
        date={1987},
        ISSN={0012-7094},
     journal={Duke Math. J.},
      volume={54},
      number={1},
       pages={179\ndash 230},
      review={\MR{885783 (88g:11022)}},
}

\bib{SGA7-2}{book}{
	label={SGA7-II},
       title={Groupes de monodromie en g\'eom\'etrie alg\'ebrique. {II}},
      series={Lecture Notes in Mathematics, Vol. 340},
   publisher={Springer-Verlag, Berlin-New York},
        date={1973},
        note={S{\'e}minaire de G{\'e}om{\'e}trie Alg{\'e}brique du Bois-Marie
  1967--1969 (SGA 7 II), Dirig{\'e} par P. Deligne et N. Katz},
      review={\MR{0354657 (50 \#7135)}},
}

\bib{MR2081941}{article}{
      author={Vasiu, Adrian},
       title={Surjectivity criteria for {$p$}-adic representations. {II}},
        date={2004},
        ISSN={0025-2611},
     journal={Manuscripta Math.},
      volume={114},
      number={4},
       pages={399\ndash 422},
      review={\MR{2081941 (2005g:11235)}},
}

\bib{MR0412295}{article}{
      author={Zalesski{\u\i}, A.~E.},
      author={Sere{\v{z}}kin, V.~N.},
       title={Linear groups generated by transvections},
        date={1976},
        ISSN={0373-2436},
     journal={Izv. Akad. Nauk SSSR Ser. Mat.},
      volume={40},
      number={1},
       pages={26\ndash 49, 221},
      review={\MR{0412295 (54 \#421)}},
}

\bib{ZB-Z}{unpublished}{
      author={Zureick-Brown, David},
      author={Zywina, David},
       title={Abelian varieties with maximal monodromy},
        date={2015},
        note={preprint}
}

\bib{MR2805578}{article}{
      author={Zywina, David},
       title={A refinement of {K}oblitz's conjecture},
        date={2011},
        ISSN={1793-0421},
     journal={Int. J. Number Theory},
      volume={7},
      number={3},
       pages={739\ndash 769},
      review={\MR{2805578 (2012e:11107)}},
}

\end{biblist}
\end{bibdiv}

\end{document}